\newcommand{\R}{\mathbb{R}}
\newcommand{\C}{\mathbb{C}}
\newcommand{\Z}{\mathbb{Z}}
\newcommand{\Q}{\mathbb{Q}}
\newcommand{\N}{\mathbb{N}}
\newcommand{\F}{\mathbb{F}}
\newcommand{\Pb}{\mathbb{P}}
\newcommand{\M}{\mathbb{M}}
\newcommand{\Li}{\mathcal{L}}
\newcommand{\Ai}{\mathcal{A}}
\newcommand{\Pis}{\mathcal{P}}
\newcommand{\Ci}{\mathcal{C}}
\newcommand{\Zi}{\mathcal{Z}}
\newcommand{\Ri}{\mathcal{R}}
\newcommand{\Di}{\mathcal{D}}
\newcommand{\Ei}{\mathcal{E}}
\newcommand{\Fi}{\mathcal{F}}
\newcommand{\Oi}{\mathcal{O}}
\newcommand{\GU}{\mathfrak{U}}
\newcommand{\Gl}{\mathfrak{l}}
\newcommand{\Gg}{\mathfrak{g}}
\newcommand{\id}{\operatorname{id}}
\newcommand{\pd}{\partial}
\newcommand{\tr}{\operatorname{tr}}
\newcommand{\End}{\operatorname{End}}
\newcommand{\longto}{\longrightarrow}
\newcommand{\Un}{\operatorname{U}}
\newcommand{\Mn}{\operatorname{M}}
\newcommand{\GeL}{\operatorname{GL}}
\newcommand{\Spec}{\operatorname{Spec}}
\newcommand{\Ran}{\operatorname{Ran}}
\newtheorem{thm}{Theorem}[section]
\newtheorem*{thm*}{Theorem}
\newtheorem{Lemma}[thm]{Lemma}
\newtheorem{prop}[thm]{Proposition}
\newtheorem{cor}[thm]{Corollary}
\numberwithin{equation}{section}
\theoremstyle{definition}
\newtheorem{dfn}[thm]{Definition}
\newtheorem*{HdgCon}{Hodge's conjecture}
\newtheorem{Remark}[thm]{Remark}
\newtheorem*{Ack}{Acknowledgment}
\newcommand{\Ad}{\operatorname{Ad}}
\newcommand{\bone}{\mathbf{1}}
\newcommand{\Ker}{\operatorname{Ker}}
\newcommand{\ch}{\operatorname{ch}}
\newcommand{\bwedge}{\mbox{\Large $\wedge$}}
\newcommand{\cs}{\operatorname{cs}}
\title{Hodge classes of Chern character forms on compact Kähler manifolds}
\author{Andreas Andersson}
\affil{\footnotesize Chalmers University of Technology, Mathematical Sciences, Maskingränd 2, 412 58 Gothenburg, Sweden
\\University of Oslo, Department of Mathematics, P.O. Box 1053 Blindern, 0316 Oslo, Norway}
\affil{\footnotesize Mathematics Subject Classification 2010 -- Primary: 14C30
; Secondary: 32L10
, 19L10 
}
\begin{document}
\maketitle
\abstract

In this paper we show that every rational cohomology class of type $(p,p)$ on a compact Kähler manifold can be representated as a differential $(p,p)$-form given by an explicit formula involving a \v{C}ech cocycle.
First we represent Chern characters of smooth vector bundles by \v{C}ech cocycles with values in the sheaf of differential forms. We then consider the behavior of these cocycles with respect to the Hodge structure on cohomology when the base manifold is compact Kähler.  

\tableofcontents 

\section*{Introduction}

This paper is a contribution to our understanding of the following claim about cohomology classes on smooth projectve varieties (over $\C$) (see 
\cite{Deli1, Lewis1, Vois11} for background and motivation):
\begin{HdgCon}[{\cite{Hodg2}}]
Let $\M$ be a smooth projective variety and let $p\in\{1,\dots,\dim_\C\M\}$. Then every class in $H^{p,p}(\M)\cap H^{2p}(\M;\Q)$ is a rational linear combination of the classes of complex-analytic subvarieties of $\M$.
\end{HdgCon}
In order to understand the usefulness of the present work, note that the Hodge conjecture has a nice reformulation in terms of vector bundles over $\M$. To see that, define the $p$th Chern character of a smooth vector bundle $\Ei$ on any smooth manifold $\M$ to be the degree-$2p$ component $\ch_p(\Ei)$ of the Chern character $\ch_*(\Ei)$, 
$$
\ch_*(\Ei)=\sum_{p=0}^{\dim_\C\M}\ch_p(\Ei)\in\bigoplus_{p=0}^{\dim_\C\M}H^{2p}(\M;\Q).
$$
If $\Theta$ is the curvature 2-form of any given connection $\theta$ on $\Ei$, a de Rham representative of $\ch_p(\Ei)$ is given by the closed differential $2p$-form $\ch_p(\theta)=\tr(\Theta^{\wedge p})/p!(-2\pi i)^p$. The induced map on topological $K$-theory,
$$
\ch_p:K^0(\M)\to H^{2p}(\M;\Q)
$$
is a surjective group homomorphism. That is, every rational cohomology class is a $\Q$-linear combination of $\ch_p(\Ei)$'s. 
For an arbitrary smooth vector bundle $\Ei$ on a compact complex-analytic manifold $\M$, the differential $2p$-form $\ch_p(\theta)$  splits in components of all possible types $(2p,0),(2p-1,1),\dots,(0,2p)$ with respect to the decomposition on cohomology induced by the given complex-analytic structure. Holomorphic vector bundles have Chern characters of type $(p,p)$, since on such bundles there are connections whose curvature 2-form is of type $(1,1)$. 

As explained in \cite[\S1, p.1]{Vois2}, if $\M$ is a smooth projective variety then the Chern classes of holomorphic vector bundles span the same subspace of $H^{2p}(\M;\Q)$ as do the complex-analytic subvarieties of $\M$. Since the set of Chern classes (elementary symmetric polynomials) and the set of Chern characters (power sums) both generate the algebra of symmetric polynomials with rational coefficients, the Chern characters generate the same classes as the Chern classes when the coefficients are in $\Q$. 

With these facts in mind, we can phrase the Hodge conjecture in the following equivalent way:
\begin{HdgCon}
Let $\M$ be a smooth projective variety and let $p\in\{1,\dots,\dim_\C\M\}$. Then every class in $H^{p,p}(\M)\cap H^{2p}(\M;\Q)$ is a $\Q$-linear combination of the $p$th Chern characters of holomorphic vector bundles over $\M$.
\end{HdgCon}
Let us now explain the result of the present paper, which splits into two. 

We first show that the de Rham cohomology class $\ch_p(\Ei)\in H^{2p}_{\rm DR}(\M;\R)$ can be represented, under the \v{C}ech--de Rham isomorphism with respect to any finite covering of $\M$ by open subsets for which each intersection is contractible, by a \v{C}ech $p$-cocycle $\xi^{E,p}$ with values in the sheaf $\Ai^p$ of differential $p$-forms on $\M$. To see this we shall use Swan's correspondence between smooth vector bundles and projective $C^\infty(\M)$-modules and calculate the Chern character form $\ch_p(I^E)$ for a choice of idempotent $I^E$ defining the module $\Gamma^\infty_0(\M;\Ei)$ of smooth sections of $\Ei$ vanishing at infinity. 
Let $\GU=(U_\alpha)_{\alpha=1,\dots,n}$ be a finite open covering of $\M$ which trivializes the vector bundle $\Ei$. The idempotent $I^E$ is constructed directly from the transition functions $g_{\alpha\beta}:U_\alpha\cap U_\beta\to\GeL(r,\C)$ of $\Ei$ and a partition of unity $(\rho_\alpha)_{\alpha=1,\dots,n}$ subordinate to the covering, as in \cite[Thm. 1.1.4]{Fedo1}. 
Let $(\theta^E_\alpha)_{\alpha=1,\dots,n}$ be the local connection 1-forms of the ``Levi-Civita connection" on $\Ei$ defined by the idempotent $I^E$ (see the main text). 
A straightforward calculation shows that the $2p$-form
\begin{align*}
\widetilde{\ch}_p(\theta^E):=&\sum_{\alpha_0=1}^n\rho_{\alpha_0}\tr_{\C^r}\big((d\theta_{\alpha_0}^E)^{\wedge p}\big)
\end{align*}
is given by
\begin{align*}
(-1)^{(p-1)!}\sum_{\alpha_0,\dots,\alpha_p=1}^n\rho_{\alpha_0}\tr_{\C^r}(d\log g_{\alpha_0\alpha_1}\wedge d\log g_{\alpha_1\alpha_2}\wedge\cdots\wedge d\log g_{\alpha_{p-1}\alpha_p})\wedge d\rho_{\alpha_1}\wedge\cdots\wedge d\rho_{\alpha_p},
\end{align*}
where $d\log g_{\alpha\beta}:=dg_{\alpha\beta}\, g_{\alpha\beta}^{-1}$. For comparison, note that the ordinary $p$th Chern character form of $\theta^E$ is
\begin{align*}
\ch_p(\theta^E)=&\sum_{\alpha_0=1}^n\rho_{\alpha_0}
\tr_{\C^r}\big((d\theta_{\alpha_0}^E+\theta_{\alpha_0}^E\wedge\theta_{\alpha_0}^E)^{\wedge p}\big).
\end{align*}
By definition, $\ch_p(\Ei)=[\ch_p(\theta^E)]$. The first purpose of this note is to show that, quite suprisingly, also the modified $p$th Chern character form $\widetilde{\ch}_p(\theta^E)$ is a de Rham representative of the $p$th Chern character of $\Ei$:
$$
\ch_p(\Ei)=[\widetilde{\ch}_p(\theta^E)]\in H_{\rm DR}^{2p}(\M;\C).
$$ 
In this way one enters, we shall see, the setting of Weil's proof of the \v{C}ech--de Rham isomorphism. Indeed, the above expression for $\widetilde{\ch}_p(\theta^E)$ shows that $\ch_p(\Ei)$ is represented in de Rham cohomology by the image under Weil's isomorphism of the $p$-cocycle $\xi^{E,p}$ with values in the sheaf $\Ai^p$ of differential $p$-forms on $\M$ defined by
$$
\xi^p_{\alpha_0\cdots\alpha_p}:=(-1)^{(p-1)!}\tr_{\C^r}(d\log g_{\alpha_0\alpha_1}\wedge d\log g_{\alpha_1\alpha_2}\wedge\cdots\wedge d\log g_{\alpha_{p-1}\alpha_p}).
$$
We call $\xi^{E,p}$ the ``smooth Atiyah $p$-cocycle" of $\Ei$, by analogy with the familiar Atiyah cocycles \cite{Atiy1} of holomorphic vector bundles on complex-analytic manifolds. In contrast to their holomorphic counterparts, the smooth Atiyah cocycles do not represent an obstruction to the existence of a connection, since every short exact sequence of smooth vector bundles has a smooth splitting. Nevertheless, we will show here that the smooth Atiyah cocycles are very useful when dealing with Chern characters of smooth vector bundles. To mention related previous work, Bott constructed a representative of the Chern classes $c_p(\Ei)$ in the total \v{C}ech--de Rham complex \cite[\S5, p.30]{Bott2}, while Brylinski--McLaughlin gave a $2p$-cocycle representative of $c_p(\Ei)$ with values in the constant sheaf $\Z_\M$ \cite{BrML1}.  

As an application of our new representative for the Chern character we will characterize exact $2p$-forms on $\M$ as modified Chern characters $\widetilde{\ch}_p(\theta^E)$ of Levi-Civita connections of trivial smooth vector bundles. Since every integer de Rham cohomology class has a representative of the type $p!\,\widetilde{\ch}_p(\theta^E)$ if the covering consists of contractible sets, in this case we obtain that every closed $2p$-form on $\M$ with integral periods is given by $p!\,\widetilde{\ch}_p(\theta^E)$ for some smooth vector bundle $\Ei$. The same partition of unity can be used for all such $2p$-forms.

With a \v{C}ech-cocycle representative for each Chern character $\ch_p(\Ei)$ available, we deduce that all $d$-closed $\Ai^p$-valued $p$-cocycles are $\Q$-linear combinations of the above-mentioned cocycles $\xi^{E,p}$ for smooth vector bundles $\Ei$. 

Next we assume that $\M$ has a complex structure, and use the fact \cite{Harv1} that the Dolbeault isomorphism $\Di^{p,p}:\check{H}^p(\GU;\Omega^p)\to H^{p,p}_{\bar{\pd}}(\M)$ can be constructed in a very similar way to Weil's \v{C}ech--de Rham isomorphism. We define $\xi^{E,p,0}$ to be the $\Ai^{p,0}$-valued part of $\xi^{E,p}$, regardless of whether $\Ei$ has a holomorphic structure or not. Then $\xi^{E,p,0}$ takes values in the sheaf $\Omega^p\subset\Ai^{p,0}$ of holomorphic $(p,0)$-forms if and only if $\bar{\pd}\xi^{E,p,0}=0$. We shall show that, when $\M$ has furthermore a Kähler structure, the Atiyah $(p,0)$-cocycles of some smooth vector bundles give rise to $\bar{\pd}$-closed differential $(p,p)$-forms which represent all $(p,p)$-classes:
\begin{thm*}\label{steponethmintro}
Let $\M$ be a compact Kähler manifold and let $\GU=(U_\alpha)_{\alpha=1,\dots,n}$ be a Stein covering of $\M$. Then every class in $H^{p,p}(\M)\cap H^{2p}(\M;\Q(p))$ has a representative which is a $\Q$-linear combination of $(p,p)$-forms of the kind 
$$
\Di^{p,p}(\xi^{p,0})=(-1)^{(p-1)!}\sum_{\alpha_0,\dots,\alpha_p}\rho_{\alpha_0}\tr_{\C^r}(\pd\log g_{\alpha_0\alpha_1}\wedge\cdots\wedge\pd\log g_{\alpha_{p-1}\alpha_p})\wedge \bar{\pd}\rho_{\alpha_1}\wedge\cdots\wedge\bar{\pd}\rho_{\alpha_p}
$$ 
for $\Ai^{p,0}$-valued $p$-cocycles $\xi^{p,0}=\xi^{E,p,0}$ of smooth vector bundles $\Ei$ on $\M$ for which $\xi^p=\xi^{p,0}$ (in particular $\bar{\pd}\xi^{p,0}=0$). 
\end{thm*}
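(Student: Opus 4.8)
The plan is to derive the theorem from the first part of the paper together with Hodge theory on the compact Kähler manifold $\M$.

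First I would reduce the statement to the following spanning assertion: \emph{the classes $\ch_p(\Ei)\in H^{2p}(\M;\C)$ of smooth vector bundles $\Ei$ with $\xi^{E,p}=\xi^{E,p,0}$ span $H^{p,p}(\M)\cap H^{2p}(\M;\Q(p))$ over $\Q$}. This reduction is essentially bookkeeping. If $\xi^{E,p}=\xi^{E,p,0}$, then every factor $d\log g_{\alpha\beta}$ occurring in $\widetilde{\ch}_p(\theta^E)$ is of type $(1,0)$, so $\widetilde{\ch}_p(\theta^E)$ has Hodge components only in bidegrees $(2p,0),(2p-1,1),\dots,(p,p)$; hence $\ch_p(\Ei)\in F^pH^{2p}(\M;\C)$, and since it is rational and conjugation-invariant it is in fact a Hodge class of pure type $(p,p)$. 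Moreover $\xi^{E,p,0}$ then takes values in $\Omega^p$ (as $\bar{\pd}\xi^{E,p,0}=0$), and running Harvey's Čech--Dolbeault zig-zag for $\Di^{p,p}$ identifies $\Di^{p,p}(\xi^{E,p,0})=\sum_{\alpha_0,\dots,\alpha_p}\rho_{\alpha_0}\,\xi^{p,0}_{\alpha_0\cdots\alpha_p}\wedge\bar{\pd}\rho_{\alpha_1}\wedge\cdots\wedge\bar{\pd}\rho_{\alpha_p}$ with the bidegree-$(p,p)$ component of $\widetilde{\ch}_p(\theta^E)$; because all lower bidegrees are absent, this component is itself $d$-closed, and on the Kähler manifold $\M$ it therefore represents the Hodge class $\ch_p(\Ei)$. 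Thus a $\Q$-linear expression $\omega=\sum_j q_j\ch_p(\Ei_j)$ with each $\Ei_j$ of pure type yields $\omega=\big[\sum_j q_j\Di^{p,p}(\xi^{E_j,p,0})\big]$, the representative claimed.

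Next I would set up the spanning assertion. Fix $\omega\in H^{p,p}(\M)\cap H^{2p}(\M;\Q(p))$. Since $\M$ is compact Kähler, $\omega$ lies in $F^pH^{2p}(\M;\C)=\Hb^{2p}(\M;\Omega^{\geq p}_\M)$, and because $\GU$ is Stein --- hence Leray for the holomorphic de Rham complex --- and $\Omega^{\geq p}_\M$ is quasi-isomorphic to $\Omega^p_{\cl}[-p]$ (holomorphic Poincaré lemma), this group is computed by $\check H^p(\GU;\Omega^p_{\cl})$, where $\Omega^p_{\cl}$ is the sheaf of $d$-closed holomorphic $p$-forms. So $\omega=[\zeta]$ for a Čech $p$-cocycle $\zeta$ with values in $\Omega^p_{\cl}$; viewed as an $\Ai^p$-valued cochain, $\zeta$ has $d$-closed components, so it is a $d$-closed $\Ai^p$-valued $p$-cocycle whose image under Weil's isomorphism is the rational class $\omega$, and $\Di^{p,p}(\zeta)$ is a $(p,p)$-form representing $\omega$. (Equivalently, one may start from $\sum_j q_j\xi^{V_j,p}$ supplied by the surjectivity of $\ch_p\colon K^0(\M)\to H^{2p}(\M;\Q)$ and the first part, and push it into this shape using the $\pd\bar{\pd}$-lemma, i.e.\ degeneration of the Frölicher spectral sequence, to modify by a total coboundary of the Čech--de Rham bicomplex.)

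The crux --- and the step I expect to be the main obstacle --- is to express the holomorphic $d$-closed cocycle $\zeta$, modulo a Čech coboundary that does not change $\Di^{p,p}$ on cohomology, as a $\Q$-linear combination $\sum_k r_k\,\xi^{E_k,p}$ of smooth Atiyah cocycles of bundles $\Ei_k$ that are themselves of pure type, i.e.\ with $\xi^{E_k,p}=\xi^{E_k,p,0}$. The surjectivity of the Chern character only produces bundles with the correct cohomology class and with no control over the Hodge type of their transition functions, so one genuinely has to \emph{upgrade} them; this is the only point that is not formal bicomplex manipulation. My plan is to re-run the Swan/Fedosov idempotent construction of the first part with $\zeta$ itself (or its Čech building blocks) as input: since the only forms then entering the construction are of type $(1,0)$ and $\bar{\pd}$-closed, the transition functions $g^{E_k}_{\alpha\beta}$ produced should satisfy $d\log g^{E_k}_{\alpha\beta}=\pd\log g^{E_k}_{\alpha\beta}$ of type $(1,0)$ --- the smooth, non-holomorphic contamination by the partition of unity dropping out of $d\log$ --- so that the $\Ei_k$ are of pure type while $\sum_k r_k\ch_p(\Ei_k)=\omega$. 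Verifying that this construction returns honest smooth vector bundles of pure type with the prescribed Atiyah cocycles is the technical heart; everything else --- the Hodge-filtration strictness, the Čech--Dolbeault identifications, the passage between $\widetilde{\ch}_p$ and $\Di^{p,p}$ --- is formal.
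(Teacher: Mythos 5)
Your setup is sound and follows the paper's skeleton: obtain a Čech $p$-cocycle $\eta$ with values in $d$-closed holomorphic $p$-forms representing the class (the paper gets this from Harvey's Dolbeault isomorphism plus the Kähler fact that holomorphic forms are $d$-closed; your $F^pH^{2p}$/holomorphic-Poincaré-lemma route is an acceptable variant), and then reduce everything to realizing $\eta$ by smooth Atiyah cocycles of pure type. But the step you yourself flag as ``the technical heart'' is precisely the content of the theorem, and your plan for it does not work as described. The Fedosov/Swan idempotent construction takes $\GeL(r,\C)$-valued transition functions and a partition of unity as \emph{input} and produces an idempotent; there is no mechanism by which a $p$-cocycle of holomorphic $p$-forms (for $p\geq 2$ not even locally a product of $1$-form-valued $1$-cocycles) can be fed into it to produce transition functions, so ``re-running the construction with $\zeta$ as input'' is not a construction. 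Surjectivity of $\ch_p$ only controls the cohomology class $[\eta]$, and the whole difficulty is that the discrepancy $\eta-\xi^{E,p}=\delta\zeta$ must be absorbed at the level of cocycles, not classes.

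The paper closes exactly this gap by a different and genuinely substantive mechanism: Theorem \ref{chernisopropan} first produces a smooth bundle $\Ei$ with $[\xi^{E,p}]=[\eta]$ in $\check{H}^p(\GU;\Zi^p)$, and then Theorem \ref{exactcoboundthm} --- whose proof rests on the Narasimhan--Ramanan universal-connection result (Lemma \ref{strongNaRalemma}), Proposition \ref{exacttwoformsprop} and Datta's lemma (Lemma \ref{Dattalemma}) --- shows that the coboundary $\delta\zeta$ is itself the smooth Atiyah $p$-cocycle of a smoothly trivial bundle. Taking a direct sum, the holomorphic cocycle $\eta$ is then \emph{on the nose} the Atiyah cocycle $\xi^{F,p}$ of a smooth bundle $\Fi$, so $\xi^{F,p}=\xi^{F,p,0}$ and $\bar{\pd}$-closedness are automatic. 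Note also that your condition ``every factor $d\log g_{\alpha\beta}$ is of type $(1,0)$'' is neither implied by $\xi^{E,p}=\xi^{E,p,0}$ (the hypothesis concerns the traced $p$-fold product, not the individual factors) nor needed: in the paper's construction the transition functions of $\Fi$ are not of pure type; only the traced cocycle is. As it stands, then, your proposal correctly identifies where the difficulty lies but does not supply the argument that resolves it.
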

This result could be helpful in finding the solution of the Hodge conjecture. Indeed, before we only knew that
\begin{enumerate}[(i)]
\item{if $\Ei$ is holomorphic then for $p=1$ we have that $\xi^{E,1,0}$ corresponds to $c_1(\Ei)=\ch_1(\Ei)$ under the Dolbeault isomorphism (apply the fiber trace to \cite[Prop. 4]{Atiy1}),}
\item{if $\Ei$ is holomorphic and $\Theta^{1,1}$ denotes the $(1,1)$-part of the curvature of a connection on $\Ei$ with $\Theta^{0,2}=0$ then $(-2\pi i)^{-p}\tr(\Theta^{1,1})^{\wedge p}$ is a Dolbeault representative of $p!\ch_p(\Ei)$ \cite[\S\S1.2, 1.3]{ABST1},}
\item{every $(p,p)$-class can be represented by a differential $(p,p)$-form of the type
$$
\Di^{p,p}(\eta):=\sum_{\alpha_0,\dots,\alpha_p}\rho_{\alpha_0}\eta_{\alpha_0\cdots\alpha_p}\wedge \bar{\pd}\rho_{\alpha_1}\wedge\cdots\wedge\bar{\pd}\rho_{\alpha_p}
$$
for some $\Omega^p$-valued $p$-cocycle $\eta$ (this is the result of Harvey referred to above and recalled below in \S\ref{Harveysec}), and}
\item{the Chern character forms $\widetilde{\ch}_p(\theta^E)$ give rise to all $(p,p)$-classes (by \S\ref{smoothsection} of this paper), but possibly all $(p,q)$-components of $\xi^{E,p}$ are needed to achieve this.}
\end{enumerate}
The second and final step needed for proving the Hodge conjecture would be to show that, when $\M$ is projective, the Atiyah classes $\xi^{E,p,0}$ appearing in the above theorem can always be chosen to come from vector bundles admitting holomorphic structures. 
It might be that the second step could be taken by showing that the group $K^0(\M)$ is generated by classes of smooth vector bundles admitting connections which are special solutions to the Yang--Mills equation (viz. the ``$\Omega$-anti-selfdual instanton'' solutions of \cite[Prop. 1.2.2]{Tian5}). Namely, if $\Ei$ is a smooth vector bundle defined by an idempotent $I^E$ (as in \S\ref{smoothsection}) whose Levi-Civita connection is such a special solution to the Yang--Mills equation then the condition $\xi^{E,p}=\xi^{E,p,0}$ seems to gaurantee that $\Ei$ is holomorphic. These Yang--Mills connections seem to appear in quantization, and the desired property of $K^0(\M)$ can possibly be deduced from studying an algebra of Toeplitz operators. This is speculative but worth investigating.


In \S\ref{algsection} we make a simple strengthening of the main result in the case of projective Kähler manifolds. We formulate the result explicitly in the language of algebraic \v{C}ech cohomology, just to make the results of the paper more readily applicable in that framework. 

\begin{Ack}
I thank Sergey Neshveyev and Adam Rennie for comments on the first half of the paper. This research was supported by ERC (grant 307663-NCGQG).
\end{Ack}

\section*{Notation}
If $\M$ is a smooth manifold we let $\Ci^\infty_\M$ be its structure sheaf, i.e. the sheaf of smooth $\C$-valued functions on $\M$. We also use $C^\infty(U)=\Ci^\infty_\M(U)$ to denote the algebra of smooth functions on an open set $U\subseteq\M$. In this paper, by a ``smooth vector bundle" on $\M$ we will mean a locally free $\Ci^\infty_\M$-module of finite rank. We denote by $\Z(p):=(2\pi i)^p\Z$ be the subgroup of $\C$ generated by $(2\pi i)^p$.
If $\M$ has a complex-analytic structure, we denote by $\Oi_\M$ the sheaf of holomorphic functions on $\M$, and a ``holomorphic vector bundle" on $\M$ is a smooth vector bundle with a specified choice of transition functions which are holomorphic. We write $\Ai^p$ and $\Ai^{p,q}$ for the $\Ci^\infty_\M$-modules of smooth $p$- and $(p,q)$-forms on $\M$, respectively, and by $\Omega^p$ the $\Oi_\M$-module of $\bar{\pd}$-closed $(p,0)$-forms, where $d=\pd+\bar{\pd}$ is the splitting of the de Rham differential (exterior derivative) into $(1,0)$- and $(0,1)$-parts.

\section{\v{C}ech cocycles for Chern characters}\label{smoothsection}


\subsection{The map $\Di\Ri$}
Let $\M$ be a smooth manifold and let $\GU=(U_\alpha)_{\alpha=1,\dots,n}$ be a finite open covering of $\M$ (finiteness of the covering will be needed already from Lemma \ref{Fedosovlemma} and onwards so we assume it throughout). We write 
$$
U_{\alpha_0\cdots\alpha_q}:=U_{\alpha_0}\cap\cdots\cap U_{\alpha_q}
$$
for all $\alpha_0,\dots,\alpha_q\in\{1,\dots,n\}$. For each $q\geq 0$ and each sheaf $\Fi$ (of complex vector spaces) we have the space $\check{C}^q(\GU;\Fi)$ of $\Fi$-valued \v{C}ech $q$-cochains on $\M$ with respect to $\GU$. Let $\delta:\check{C}^q(\GU;\Fi)\to\check{C}^{q+1}(\GU;\Fi)$ be the \v{C}ech coboundary operator, and let $\check{Z}^q(\GU;\Fi)$ and $\check{H}^q(\GU;\Fi)$ be the spaces of $q$-cocycles and cohomology classes of $q$-cocycles respectively. We write $q$-cochains in the form 
$$
\eta=(\eta_{\alpha_0\cdots\alpha_q})_{\alpha_0,\cdots,\alpha_q=1,\dots,n}\in\check{C}^q(\GU;\Fi)\subset\bigoplus_{\alpha_0,\cdots,\alpha_q=1}^n\Fi(U_{\alpha_0\cdots\alpha_q})
$$ 
where the component $\eta_{\alpha_0\cdots\alpha_q}$ belongs to the space $\Fi(U_{\alpha_0\cdots\alpha_q})$ of sections of $\Fi$ over $U_{\alpha_0\cdots\alpha_q}$. Here only strictly increasing $(q+1)$-tuples $(\alpha_0,\dots,\alpha_q)$ are allowed, but we require the cocycles to be antisymmetric under exchange of any two indices $\alpha,\beta\in\{1,\dots,n\}$ so one could restrict attention to orded $(q+1)$-tuples. 

Now take $\Fi$ to be the $\Ci^\infty_\M$-module $\Ai^p$. The de Rham differential $d:\Ai^p\to\Ai^{p+1}$ gives rise to the differential 
$$
d:\check{C}^q(\GU;\Ai^p)\to\check{C}^q(\GU;\Ai^{p+1}),\qquad d\eta:= (d\eta_{\alpha_0\cdots\alpha_q})_{\alpha_0,\dots,\alpha_q=1,\dots,n},
$$
and $d$ commutes with the \v{C}ech coboundary operator $\delta$. Using the double complex $(\check{C}^q(\GU;\Ai^p),d,\delta)_{q,p\geq 0}$, Weil constructed an explicit isomorphism between the \v{C}ech cohomology $\check{H}^*(\GU;\C_\M)$ of the constant sheaf $\C_\M$ and the de Rham cohomology $H_{\rm DR}^*(\M;\C)$ of $\M$ \cite{Weil1} (see also \cite[Thm. 8.9]{BoTu1}). Here we will construct cocycle representatives of Chern characters in the Chern--Weil picture and relate it to Weil's approach to the \v{C}ech--de Rham isomorphism. 

The \textbf{cup product} of two cochains $\xi\in\check{C}^q(\GU;\Ai^p)$ and $\eta\in\check{C}^{q'}(\GU;\Ai^{p'})$ is the cochain $\xi\cup\eta\in\check{C}^{q+q'}(\GU;\Ai^{p+p'})$ defined by \cite[p. 174]{BoTu1}
\begin{equation}\label{cupprod}
(\xi\cup\eta)_{\alpha_0\cdots\alpha_{q+q'}}:=(-1)^{pq'}\xi_{\alpha_0\cdots\alpha_q}\wedge\eta_{\alpha_q\cdots\alpha_{q+q'}}.
\end{equation}

\begin{Lemma}\label{DRlemma}
Define a map $\Di\Ri^{p,q}:\check{C}^q(\GU;\Ai^p)\to\Ai^{p+q}(\M)$ by
$$
\Di\Ri^{p,q}(\xi):=\sum_{\alpha_0,\dots,\alpha_q}\rho_{\alpha_0}\xi_{\alpha_0\cdots\alpha_q}\wedge d\rho_{\alpha_1}\wedge\cdots\wedge d\rho_{\alpha_q}.
$$
Then $\Di\Ri^{p,q}$ maps
\begin{enumerate}[(i)]
\item{cup products of cocycles into wedge products of differential forms, and}
\item{$d$-closed $\delta$-cocycles into closed differential forms. In fact,
\begin{equation}\label{dofDReq}
d\Di\Ri^{p,q}(\xi)=(-1)^{q+1}\Di\Ri^{p,q+1}(\delta\xi)+(-1)^q\Di\Ri^{p+1,q}(d\xi).
\end{equation}}
\end{enumerate}
\end{Lemma}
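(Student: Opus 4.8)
The plan is to prove both assertions by a direct computation whose only nontrivial ingredient is that a partition of unity satisfies $\sum_{\alpha}\rho_\alpha = 1$, hence $\sum_\alpha d\rho_\alpha = 0$.

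For part (ii) I would apply the Leibniz rule to the definition of $\Di\Ri^{p,q}(\xi)$ term by term. Since every factor $d\rho_{\alpha_i}$ is closed, $d$ acts only on $\rho_{\alpha_0}$ and on $\xi_{\alpha_0\cdots\alpha_q}$. The contribution in which $d$ hits $\xi_{\alpha_0\cdots\alpha_q}$ reassembles at once, up to an overall sign, into $\Di\Ri^{p+1,q}(d\xi)$. The contribution in which $d$ hits $\rho_{\alpha_0}$ is $\sum_{\alpha_0,\dots,\alpha_q} d\rho_{\alpha_0}\wedge\xi_{\alpha_0\cdots\alpha_q}\wedge d\rho_{\alpha_1}\wedge\cdots\wedge d\rho_{\alpha_q}$; moving $d\rho_{\alpha_0}$ past the $p$-form $\xi_{\alpha_0\cdots\alpha_q}$ (a Koszul sign) and relabelling indices brings it to the form $\pm\sum_{\alpha_0,\dots,\alpha_q}\xi_{\alpha_0\cdots\alpha_q}\wedge d\rho_{\alpha_0}\wedge\cdots\wedge d\rho_{\alpha_q}$. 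It remains to recognise this as $\pm\Di\Ri^{p,q+1}(\delta\xi)$: when one expands $(\delta\xi)_{\beta_0\cdots\beta_{q+1}}=\sum_{j=0}^{q+1}(-1)^j\xi_{\beta_0\cdots\widehat{\beta_j}\cdots\beta_{q+1}}$ inside $\Di\Ri^{p,q+1}$, every summand with $j\ge 1$ contains a summation index $\beta_j$ occurring only in the factor $d\rho_{\beta_j}$, so summing that index out gives $\sum_{\beta_j} d\rho_{\beta_j}=0$; only the $j=0$ term survives, and summing out $\rho_{\beta_0}$ produces exactly the expression above. This gives \eqref{dofDReq}, and the rest of (ii) is immediate: if $\delta\xi=0=d\xi$ the right-hand side vanishes, so $\Di\Ri^{p,q}(\xi)$ is closed.

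For part (i) I would expand $\Di\Ri^{p+p',q+q'}(\xi\cup\eta)$ using the cup-product formula \eqref{cupprod}: it is a sum over $(q+q'+1)$-tuples $(\gamma_0,\dots,\gamma_{q+q'})$ of $\pm\rho_{\gamma_0}\xi_{\gamma_0\cdots\gamma_q}\wedge\eta_{\gamma_q\cdots\gamma_{q+q'}}\wedge d\rho_{\gamma_1}\wedge\cdots\wedge d\rho_{\gamma_{q+q'}}$, in which the index $\gamma_q$ is shared between $\xi$ and $\eta$. The idea is to "unshare" it using the cocycle condition on $\eta$: write $1=\sum_\mu\rho_\mu$ in front of $\eta_{\gamma_q\cdots\gamma_{q+q'}}$ and use $(\delta\eta)_{\mu\gamma_q\cdots\gamma_{q+q'}}=0$ to replace $\eta_{\gamma_q\cdots\gamma_{q+q'}}$ by the alternating sum $\sum_{j}(-1)^j\eta_{\mu\gamma_q\cdots\widehat{\gamma_{q+j}}\cdots\gamma_{q+q'}}$; exactly as in part (ii), all terms with $j\ge 1$ are annihilated by $\sum d\rho=0$, leaving $\sum_\mu\rho_\mu\eta_{\mu\gamma_{q+1}\cdots\gamma_{q+q'}}$, which no longer involves $\gamma_q$. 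The summation indices now separate into the disjoint blocks $(\gamma_0,\dots,\gamma_q)$ and $(\mu,\gamma_{q+1},\dots,\gamma_{q+q'})$; after reordering the wedge factors so that the $\xi$-block and the $\eta$-block are grouped (one further Koszul sign), the double sum factorises as $\pm\Di\Ri^{p,q}(\xi)\wedge\Di\Ri^{p',q'}(\eta)$, which is the claim.

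I do not expect a genuine obstacle: the argument is pure bookkeeping. The one thing that requires care is keeping the signs straight — those coming from commuting forms past one another and those built into the \v{C}ech differential and the cup product — together with the index relabellings. The single conceptual mechanism, used three times above, is that $\sum_\alpha\rho_\alpha=1$ forces $\sum_\alpha d\rho_\alpha=0$, which is what collapses the alternating \v{C}ech sums; this is precisely the device behind Weil's construction of the \v{C}ech--de Rham isomorphism, into which the lemma is meant to feed.
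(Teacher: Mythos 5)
Your proposal is correct and follows essentially the same route as the paper: both parts are the direct partition-of-unity computation, with (ii) being the Leibniz rule plus the collapse of the alternating \v{C}ech sum via $\sum_\alpha d\rho_\alpha=0$, and in (i) you merely re-derive inline (by inserting $\sum_\mu\rho_\mu$ and using $\delta\eta=0$) the restriction formula $\Di\Ri^{p',q'}(\eta)|_{U_{\alpha}}=\sum_{\beta_1,\dots,\beta_{q'}}\eta_{\alpha\beta_1\cdots\beta_{q'}}\wedge d\rho_{\beta_1}\wedge\cdots\wedge d\rho_{\beta_{q'}}$ that the paper quotes from the literature. The only outstanding item is the sign bookkeeping you postpone; carrying it out yields an overall factor $(-1)^{pq'+p'q}$ in (i), which equals $+1$ in the diagonal case $p=q$, $p'=q'$ that is the one actually used later in the paper.
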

\begin{proof}
For (i) we can follow \cite[Remark IV.8.10]{Dema1} (the only difference is that we use $\Ai^p$-valued cocycles, not $\R_\M$-valued, so the sign $(-1)^{pq'}$ has to be inserted). In more detail, for $\xi$ with $\delta\xi=0$, the restriction of $\Di\Ri^{p,q}(\eta)$ to the open subset $U_{\alpha_q}$ is given by
\begin{align*}
\Di\Ri^{p,q}(\eta)|_{U_{\alpha_q}}&=\sum_{\alpha_{q+1},\dots,\alpha_{q+q'}}\eta_{\alpha_q\cdots\alpha_{q+q'}}\wedge d\rho_{\alpha_{q+1}}\wedge\cdots\wedge d\rho_{\alpha_{q+q'}}
\end{align*}
(see \cite[Remark 8.59]{RoSa1}). So for $\xi$ and $\eta$ as in \eqref{dofDReq}, we have
\begin{align*}
\Di\Ri^{p,q}(\xi\cup\eta)&=(-1)^{pq'}\sum_{\alpha_0,\dots,\alpha_{q+q'}}\rho_{\alpha_0}\xi_{\alpha_0\cdots\alpha_q}\wedge\eta_{\alpha_q\cdots\alpha_{q+q'}}\wedge d\rho_{\alpha_1}\wedge\cdots\wedge d\rho_{\alpha_{q+q'}}
\\&=\sum_{\alpha_0,\dots,\alpha_q}\rho_{\alpha_0}\xi_{\alpha_0\cdots\alpha_q}\wedge d\rho_{\alpha_1}\wedge\cdots\wedge d\rho_{\alpha_q}\wedge\Di\Ri^{p,q}(\eta)|_{U_{\alpha_q}}
\\&=\Di\Ri^{p,q}(\xi)\wedge\Di\Ri^{p,q}(\eta)
\end{align*}
and (i) holds. 
Property (ii) is deduced just as in \cite[Lemma 8.58]{RoSa1}, \cite[Lemma 5.2.10]{Tark1} or \cite[\S1]{ToTo1}.
\end{proof}
Consider more generally a smooth vector bundle $\Ei$ on $\M$ and the vector space $\check{Z}^q(\GU;\Ai^p\otimes\End\Ei)$ consisting of $(\Ai^p\otimes_{\Ci^\infty_\M}\End\Ei)$-valued $q$-cocycles for any $p,q\geq 0$. 
We introduce a map 
$$
\boldsymbol\Di\boldsymbol\Ri^{p,q}:\check{Z}^q(\GU;\Ai^p\otimes\End\Ei)\to\Ai^{p+q}(\M)\otimes_{C^\infty(\M)}\Gamma^\infty(\M;\End\Ei)
$$
by a formula perfectly analogous to that of $\Di\Ri^{p,q}$,
$$
\boldsymbol\Di\boldsymbol\Ri^{p,q}(\boldsymbol\eta):=\sum_{\alpha_0,\dots,\alpha_q}\rho_{\alpha_0}\boldsymbol\eta_{\alpha_0\cdots\alpha_q}\wedge d\rho_{\alpha_1}\wedge\cdots\wedge d\rho_{\alpha_q}.
$$
Generalizing the cup product \eqref{cupprod}, we define the cup product of two cochains $\boldsymbol\xi\in\check{C}^q(\GU;\Ai^p\otimes\End\Ei)$ and $\boldsymbol\eta\in\check{C}^{q'}(\GU;\Ai^{p'}\otimes\End\Ei)$ in the same way,
\begin{equation}\label{Endcupprodgen}
(\boldsymbol\xi\cup\boldsymbol\eta)_{\alpha_0\cdots\alpha_{q+q'}}
:=(-1)^{pq'}\boldsymbol\xi_{\alpha_0\cdots\alpha_q}\wedge\boldsymbol\eta_{\alpha_q\cdots\alpha_{q+q'}},
\end{equation}
where now the wedge-product notation includes multiplication of the factors in $\End\Ei$. In particular, for $p$ elements $\boldsymbol\eta_1,\dots,\boldsymbol\eta_p$ in $\check{Z}^1(\GU;\Ai^1\otimes\End\Ei)$, we have
\begin{equation}\label{Endcupprod}
(\boldsymbol\eta_1\cup\cdots\cup\boldsymbol\eta_p)_{\alpha_0\cdots\alpha_p}=(-1)^{(p-1)!}(\boldsymbol\eta_1)_{\alpha_0\alpha_1}\wedge(\boldsymbol\eta_2)_{\alpha_1\alpha_2}\wedge\cdots\wedge(\boldsymbol\eta_p)_{\alpha_{p-1}\alpha_p}.
\end{equation}
Letting $\mu:(\Ai^p\otimes\End\Ei)\otimes_\C(\Ai^{p'}\otimes\End\Ei)$ be the multiplication map $\End\Ei\otimes_\C\End\Ei\to\End\Ei$ composed with the antisymmetrization $\Ai^p\otimes_\C\Ai^{p'}\to\Ai^p\otimes_{\Ci^\infty_\M}\Ai^{p'}\to\Ai^{p+p'}$, the product \eqref{Endcupprodgen} is obtained (up to a sign) by applying $\mu$ to the usual cup product with values in the tensor product $(\Ai^p\otimes\End\Ei)\otimes_\C(\Ai^{p'}\otimes\End\Ei)$,
$$
(\boldsymbol\xi\cup\boldsymbol\eta)_{\alpha_0\cdots\alpha_{q+q'}}
=(-1)^{pq'}\mu(\boldsymbol\xi_{\alpha_0\cdots\alpha_q}\otimes_\C\boldsymbol\eta_{\alpha_q\cdots\alpha_{q+q'}}).
$$
This makes it easy to see that $\delta$ is a graded derivation with respect to $\cup$, since this is true for the ordinary cup product and $\delta$ commutes with any morphism of sheaves.

Again the $\boldsymbol\Di\boldsymbol\Ri^{p,q}$'s map cup products of cocycles into wedge products of differential forms by construction, because the scalar-valued 1-forms $d\rho_\beta$ anti-commute with each $\End\Ei$-valued 1-form $(\boldsymbol\eta_s)_{\alpha_{s-1}\alpha_s}$. 


\subsection{Smooth Atiyah $p$-cocycles}
Now let $\Ei$ be a rank-$r$ smooth vector bundle on $\M$ and assume that the finite open covering $\GU=(U_\alpha)_{\alpha=1,\dots,n}$ has been chosen such that $\Ei$ is trivial over each $U_\alpha$. Then $\Ei$ is characterized by its smooth transition functions $g_{\alpha\beta}:U_{\alpha\beta}\to\GeL(r,\C)$. A connection on $\Ei$ can be defined as a collection $\theta=(\theta_\alpha)_{\alpha=1,\dots,n}$, where $\theta_\alpha$ is a 1-form on $U_\alpha$ with values in the algebra $\Mn_r(\C)$ of complex $r\times r$-matrices, such that $\theta_\alpha=dg_{\alpha\beta}\, g_{\alpha\beta}^{-1}+g_{\beta\alpha}^{-1}\theta_\beta g_{\beta\alpha}$ on $U_{\alpha\beta}$. The curvature 2-form $\Theta$ of the connection is then given by 
$$
\Theta=d\theta+\theta\wedge\theta:=
\sum_\alpha\rho_\alpha(d\theta_\alpha+\theta_\alpha\wedge\theta_\alpha)\in\Ai^2(\M)\otimes_{C^\infty(\M)}\Gamma^\infty(\M;\End\Ei).
$$
Here and below we will denote by $d\theta$ the global $\End\Ei$-valued $1$-form $\sum_\alpha\rho_\alpha d\theta_\alpha$, and thus \emph{not} the $\Ai^1\otimes_{\Ci^\infty_\M}\End\Ei$-valued 0-cocycle $d\theta=(d\theta_\alpha)_{\alpha=1,\dots,n}$ even though we use $\theta$ to denote the family $(\theta_\alpha)_{\alpha=1,\dots,n}$. 

For any $p\geq 1$, the $p$th Chern character $\ch_p(\Ei)\in H^{2p}_{\rm DR}(\M;\R)$ of $\Ei$ is by definition the de Rham cohomology class of the closed $2p$-form
\begin{align}
\ch_p(\theta):=\frac{1}{p!(-2\pi i)^p}\tr(\Theta^{\wedge p})=\frac{1}{p!(-2\pi i)^p}\tr\big((d\theta+\theta\wedge\theta)^{\wedge p}\big)\label{chpoftheta}
\end{align}
We are going to focus on the part $\tr\big((d\theta)^{\wedge p}\big)$,
for a special choice of connection $\theta$. First we need some preliminary facts. 

The following lemma is part of an explicit description, which we believe first appeared in \cite[Thm. 1.1.4]{Fedo1}, of Swan's correspondence \cite[\S2]{GVF1}, \cite{Swan1} between smooth vector bundles $\Ei$ on a smooth manifold $\M$ and the projective $C^\infty(\M)$-modules $\Gamma^\infty(\M;\Ei)$ of smooth sections. We can keep it more general by applying the Swan theorem for locally compact spaces \cite[Thm. 8]{Renn1}, in which case the correspondence concerns $\Gamma^\infty_0(\M;\Ei)$, the module of smooth sections vanishing at infinity.
\begin{Lemma}[{\cite[Thm. 1.1.4]{Fedo1}}]\label{Fedosovlemma}
Let $\Ei$ be a rank-$r$ smooth vector bundle over a smooth manifold $\M$, and describe $\Ei$ by transition functions $g_{\alpha\beta}:U_{\alpha\beta}\to\GeL(r,\C)$ relative to some finite open covering $\GU=(U_\alpha)_{\alpha=1,\dots,n}$ of $\M$. Take a partition of unity $(\rho_\alpha)_{\alpha=1,\dots,n}$ of $\M$ subordinate to $\GU$. Define an $nr\times nr$-matrix $I^E$ with entries in $C^\infty(\M)$ by specifying its $(\alpha,\beta)$-block to be
$$
I^E_{\alpha\beta}:=\rho_\alpha g_{\alpha\beta}.
$$
Then $I^E$ is an idempotent such that $I^EC^\infty(\M)^{\oplus nr}\cong\Gamma^\infty_0(\M;\Ei)$ as $C^\infty(\M)$-modules. A different choice of transition functions for (the isomorphism class of) $\Ei$ leads to an idempotent which is stably unitarily equivalent to $I^E$. Conversely, stably unitarily equivalent idempotent matrices with entries in $C^\infty(\M)$ define isomorphic vector bundles. 
\end{Lemma}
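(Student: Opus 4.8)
The plan is to prove idempotency and the module isomorphism simultaneously by realizing $I^E$ as a composite of two explicit $C^\infty(\M)$-linear maps, rather than verifying $(I^E)^2=I^E$ by hand. Writing a section $s\in\Gamma^\infty_0(\M;\Ei)$ through its frame components $s_\alpha\in C^\infty_0(U_\alpha;\C^r)$, which obey $s_\alpha=g_{\alpha\beta}s_\beta$ on $U_{\alpha\beta}$, I would set
\[
\Phi:\Gamma^\infty_0(\M;\Ei)\to C^\infty(\M)^{\oplus nr},\qquad \Phi(s):=(\rho_\alpha s_\alpha)_{\alpha=1,\dots,n},
\]
where each $\rho_\alpha s_\alpha$ is extended by zero outside $U_\alpha$ (legitimate because $\supp\rho_\alpha\subset U_\alpha$), and in the reverse direction
\[
\Psi:C^\infty(\M)^{\oplus nr}\to\Gamma^\infty_0(\M;\Ei),\qquad \Psi(v)_\gamma:=\sum_{\alpha}g_{\gamma\alpha}v_\alpha,
\]
prescribing the $U_\gamma$-component of the output; the cocycle identity $g_{\gamma\delta}g_{\delta\alpha}=g_{\gamma\alpha}$ guarantees that these local components transform correctly and hence glue to a genuine section. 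Both maps are manifestly $C^\infty(\M)$-linear.

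The computation I would then carry out is the two composites. Using $g_{\gamma\alpha}s_\alpha=s_\gamma$ and $\sum_\alpha\rho_\alpha=1$ gives $\Psi(\Phi(s))_\gamma=\sum_\alpha \rho_\alpha g_{\gamma\alpha}s_\alpha=s_\gamma\sum_\alpha\rho_\alpha=s_\gamma$, so $\Psi\circ\Phi=\id$ and $\Phi$ is an isomorphism of $\Gamma^\infty_0(\M;\Ei)$ onto its image $\Phi\bigl(\Gamma^\infty_0(\M;\Ei)\bigr)\subset C^\infty(\M)^{\oplus nr}$. On the other side $(\Phi\circ\Psi)(v)_\alpha=\rho_\alpha\sum_\beta g_{\alpha\beta}v_\beta=\sum_\beta(\rho_\alpha g_{\alpha\beta})v_\beta$, which is precisely multiplication by the matrix with blocks $\rho_\alpha g_{\alpha\beta}$. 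Thus, once $\Phi\Psi$ is known to define an endomorphism of $C^\infty(\M)^{\oplus nr}$, it is the matrix $I^E$, it is idempotent because $(\Phi\Psi)(\Phi\Psi)=\Phi(\Psi\Phi)\Psi=\Phi\Psi$, and its image $I^EC^\infty(\M)^{\oplus nr}=\Phi\bigl(\Gamma^\infty_0(\M;\Ei)\bigr)$ is isomorphic to $\Gamma^\infty_0(\M;\Ei)$ via $\Phi$ with inverse $\Psi$. The step I expect to be the main obstacle is exactly the regularity bookkeeping hidden here: one must confirm that each block $\rho_\alpha g_{\alpha\beta}$, a priori defined only on $U_{\alpha\beta}$, extends to a \emph{global} smooth function, and likewise that $\Psi$ takes values in smooth sections. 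This is where the subordination $\supp\rho_\alpha\subset U_\alpha$ is essential, and the cleanest way to make the extension manifest is to relate $I^E$ to the symmetric projection with blocks $\sqrt{\rho_\alpha}\,g_{\alpha\beta}\sqrt{\rho_\beta}$, which is supported in the overlap $U_{\alpha\beta}$ and therefore visibly smooth, and which shares the image module $\Gamma^\infty_0(\M;\Ei)$. The vanishing-at-infinity clause and the locally compact case then follow the corresponding argument in \cite[Thm. 8]{Renn1}.

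For the final two assertions I would invoke the Serre--Swan dictionary rather than recompute. If $g'_{\alpha\beta}=h_\alpha g_{\alpha\beta}h_\beta^{-1}$ with $h_\alpha:U_\alpha\to\GeL(r,\C)$ describes the same isomorphism class on the same cover (passing to a common refinement reduces a general change of cover to this case), then the first part shows $I^E$ and $I^{E'}$ have images both isomorphic to $\Gamma^\infty_0(\M;\Ei)$, so they are algebraically equivalent idempotents over $C^\infty(\M)$. Promoting algebraic equivalence to \emph{stable unitary} equivalence is the standard fact for a pre-$C^*$-algebra: pass to the associated self-adjoint projections by the standard formula turning an idempotent into an equivalent projection, use that Murray--von Neumann equivalent projections become unitarily equivalent after adjoining one zero block, and transport back. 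Conversely, a stable unitary equivalence $u(I^E\oplus0)u^*=I^{E'}\oplus0$ induces a $C^\infty(\M)$-module isomorphism of the images, hence of $\Gamma^\infty_0(\M;\Ei)$ and $\Gamma^\infty_0(\M;\Ei')$, and by the full faithfulness of the global-sections functor in Swan's correspondence \cite[\S2]{GVF1} the bundles $\Ei$ and $\Ei'$ are isomorphic. The only residual care is to keep every unitary and every functional-calculus expression inside the smooth algebra $C^\infty(\M)$, which is legitimate because $C^\infty(\M)$ is closed under holomorphic functional calculus in its $C^*$-closure.
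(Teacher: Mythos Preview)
The paper gives no proof here; the lemma is simply cited from \cite{Fedo1}, and the Remark immediately afterwards notes that Fedosov in fact works with the symmetric idempotent $\tilde I^E_{\alpha\beta}=\sqrt{\rho_\alpha\rho_\beta}\,g_{\alpha\beta}$, not with $I^E_{\alpha\beta}=\rho_\alpha g_{\alpha\beta}$. So there is nothing in the paper to compare against; the only question is whether your argument stands on its own.

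Your $\Phi/\Psi$ factorisation is the standard device and, once the maps are well-defined, delivers idempotency and the module isomorphism in one stroke. The gap is exactly where you flag it, but the remedy you sketch does not close it. As written, $\Psi(v)_\gamma=\sum_\alpha g_{\gamma\alpha}v_\alpha$ is \emph{not} defined on all of $C^\infty(\M)^{\oplus nr}$: the summand $g_{\gamma\alpha}v_\alpha$ lives only on $U_{\gamma\alpha}$, and an arbitrary $v_\alpha\in C^\infty(\M)$ has no reason to vanish near $\partial U_\alpha$. Correspondingly the block $\rho_\alpha g_{\alpha\beta}$ need not extend to a global smooth function: the subordination $\supp\rho_\alpha\subset U_\alpha$ handles the boundary $\partial U_\alpha$ but does nothing at points of $\supp\rho_\alpha\setminus U_\beta$, where $g_{\alpha\beta}$ is simply undefined. ``Relating $I^E$ to $\tilde I^E$'' shows that $\tilde I^E$ has globally smooth entries (its blocks are supported in $U_{\alpha\beta}$) and image $\cong\Gamma^\infty_0(\M;\Ei)$, but it cannot retroactively turn $I^E$ into a matrix over $C^\infty(\M)$. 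This is really a defect of the lemma's formulation rather than of your method: your argument runs cleanly for $\tilde I^E$ with $\tilde\Phi(s)=(\sqrt{\rho_\alpha}\,s_\alpha)_\alpha$ and $\tilde\Psi(v)_\gamma=\sum_\alpha g_{\gamma\alpha}\sqrt{\rho_\alpha}\,v_\alpha$ (choosing the partition so that each $\sqrt{\rho_\alpha}$ is smooth), and that is precisely the version Fedosov proves and, by the paper's own Remark, the only version needed downstream. If you want the asymmetric $I^E$ literally, you must add a hypothesis---for instance that each trivialisation extends over a neighbourhood of $\overline{U_\alpha}$---or otherwise the statement is not true as written.
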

It follows that the assignment $\Ei\to I^E$, which clearly takes direct sums to direct sums and tensor products to tensor products, induces the ring isomorphism
$$
K^0(\M)\cong K_0(C_0(\M))
$$
between the topological $K$-theory of the space $\M$ and the topological $K$-theory of the $C^*$-algebra $C_0(\M)$ of continuous functions on $\M$ vanishing at infinity. 

From the identification $\Gamma^\infty_0(\M;\Ei)\cong I^EC^\infty(\M)^{\oplus nr}$ one sees that a smooth vector bundle $\Ei$ always has a connection $\nabla:\Gamma^\infty_0(\M;\Ei)\to\Gamma^\infty_0(\M;\Ei)\otimes_{C^\infty(\M)}\Ai^1(\M)$, namely the  \textbf{Levi-Civita connection} (aka ``Grassmann connection") defined as the compression of the exterior derivative on $C^\infty(\M)^{\oplus nr}$ to the submodule $\Gamma^\infty_0(\M;\Ei)$ \cite[\S8.3]{GVF1}. We denote by $\theta^E$ the collection of local connection 1-forms of the Levi-Civita connection. Its curvature 2-form $\Theta^E$ can be understood as the operator
$$
\Theta^E=I^E\, dI^E\wedge dI^E.
$$
The property $I^EI^E=I^E$ gives  $I^E\, dI^E\wedge dI^E=dI^E\wedge dI^E\, I^E$, which allows $\tr_{\C^{nr}}\big((I^E\,d I^E\wedge dI^E)^{\wedge p}\big)$ to be simplified to $\tr_{\C^{nr}}\big(I^E\, (d I^E\wedge dI^E)^{\wedge p}\big)$. Therefore, if one defines the $p$th Chern character form of $I^E$ to be
$$
\ch_p(I^E):=\frac{1}{p!(-2\pi i)^p}\tr_{\C^{nr}}\big(I^E\, (d I^E\wedge dI^E)^{\wedge p}\big),
$$
then $\ch_p(I^E)$ coincides with the $2p$-form $\ch_p(\theta)$ defined in \eqref{chpoftheta}, for this choice of connection $\theta=\theta^E$, and so $\ch_p(I^E)$ represents the cohomology class $\ch_p(\Ei)$.

\begin{Remark}
Actually, the reference \cite[Thm. 1.1.4]{Fedo1} cited in Lemma \ref{Fedosovlemma} uses the idempotent $\tilde{I}_{\alpha\beta}^E=\sqrt{\rho_\alpha\rho_\beta}g_{\alpha\beta}$, but a direct computation shows that $\tilde{I}^E$ gives the same Chern character forms as $I_{\alpha\beta}^E=\rho_\alpha g_{\alpha\beta}$,
$$
\ch_p(\tilde{I}^E)=\ch_p(I^E),\qquad\forall p\geq 0.
$$
If one makes a polar decomposition $g_{\alpha\beta}=u_{\alpha\beta}(g_{\alpha\beta}^*g_{\alpha\beta})^{1/2}$ (and thereby chooses the Hermitian metric on $\Ei$ corresponding to the standard inner product on $\C^r$ via the trivialization), one can also consider the idempotent
$$
P^E:=\sqrt{\rho_\alpha\rho_\beta}u_{\alpha\beta},
$$ 
which is a moreover a projection, defining the same vector bundle $\Ei$. But whenever $\M$ is an complex-analytic manifold, the $(p',q)$-component of $\ch_p(P^E)$ in $\Ai^{2p}=\bigoplus_{p'+q=2p}\Ai^{p',q}$ is very different from that of $\ch_p(I^E)$. This is easily seen when the $\GeL(r,\C)$-valued transition functions $g_{\alpha\beta}$ are holomorphic, because then the $\Un(r)$-valued transition functions $u_{\alpha\beta}$ are not holomorphic. For our purposes it will be more convenient to use the idempotent $I^E$, even though it is not a projection with respect to the standard inner product on $C^\infty(\M)^{\oplus nr}$.
\end{Remark}

\begin{prop}\label{calcofsmoothChern}
Let $p\geq 1$. In the setting of Lemma \ref{Fedosovlemma}, for the Levi-Civita connection $\theta^E$ one has
\begin{align*}
&\tr_{\C^{nr}}\big((d\theta^E)^{\wedge p}\big)
\\&=(-1)^{(p-1)!}\sum_{\alpha_0,\dots,\alpha_p=1}^n\rho_{\alpha_0}\tr_{\C^r}(d\log g_{\alpha_0\alpha_1}\wedge d\log g_{\alpha_1\alpha_2}\wedge\cdots\wedge d\log g_{\alpha_{p-1}\alpha_p})\wedge d\rho_{\alpha_1}\wedge\cdots\wedge d\rho_{\alpha_p},
\end{align*}
where $d\log g_{\alpha\beta}:=dg_{\alpha\beta}\, g_{\alpha\beta}^{-1}$.
\end{prop}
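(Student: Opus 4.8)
The plan is to reduce the statement to an explicit local computation and then to expand and reorganize. First I would pin down the local connection $1$-forms $\theta^E_\alpha$ of the Levi-Civita connection attached to $I^E$. Using the explicit isomorphism $\Gamma^\infty_0(\M;\Ei)\cong I^E C^\infty(\M)^{\oplus nr}$ of Lemma \ref{Fedosovlemma} — over $U_\alpha$ a local frame of the range of $I^E$ is $v\mapsto(\rho_\gamma g_{\gamma\alpha}v)_{\gamma=1,\dots,n}$ — one computes $\nabla=I^E\circ d$ in this frame. The one point worth care is that the term that would naively be proportional to $\sum_\gamma d\rho_\gamma$ drops out, because $\sum_\gamma\rho_\gamma\equiv1$; what remains is
\[
\theta^E_\alpha=\sum_{\gamma=1}^{n}\rho_\gamma\, d\log g_{\alpha\gamma},\qquad d\log g_{\alpha\gamma}=dg_{\alpha\gamma}\,g_{\alpha\gamma}^{-1}.
\]
A direct check (again using $\sum_\gamma\rho_\gamma\equiv1$) confirms that this satisfies the gluing relation $\theta_\alpha=dg_{\alpha\beta}g_{\alpha\beta}^{-1}+g_{\beta\alpha}^{-1}\theta_\beta g_{\beta\alpha}$, so it really is a connection.

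Next I would differentiate. From the displayed formula and the standard identity $d(d\log g_{\alpha\gamma})=(d\log g_{\alpha\gamma})^{\wedge2}$, the Leibniz rule gives
\[
d\theta^E_\alpha=\sum_{\gamma}d\rho_\gamma\wedge d\log g_{\alpha\gamma}+\sum_{\gamma}\rho_\gamma\,(d\log g_{\alpha\gamma})^{\wedge2}.
\]
The quantity to be computed, $\tr_{\C^{nr}}\big((d\theta^E)^{\wedge p}\big)=\widetilde{\ch}_p(\theta^E)=\sum_{\alpha_0}\rho_{\alpha_0}\tr_{\C^r}\big((d\theta^E_{\alpha_0})^{\wedge p}\big)$, is then obtained by wedging $p$ copies of $d\theta^E_{\alpha_0}$ on $U_{\alpha_0}$, taking the $\C^r$-trace, multiplying by $\rho_{\alpha_0}$ and summing over $\alpha_0$. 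Expanding $(d\theta^E_{\alpha_0})^{\wedge p}$ produces $2^p$ families of terms, indexed by whether each of the $p$ tensor slots contributes its ``$d\rho$-part'' $d\rho_{\alpha_j}\wedge d\log g_{\alpha_0\alpha_j}$ or its ``square part'' $\rho_{\alpha_j}(d\log g_{\alpha_0\alpha_j})^{\wedge2}$.

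The heart of the proof — and, I expect, the only genuinely delicate part — is to show that this whole sum collapses to the asserted expression. The tools are: (i) the graded cyclicity of the $\C^r$-trace of wedge products of $\Mn_r(\C)$-valued forms, and in particular $\tr_{\C^r}(M^{\wedge2k})=0$ for every $\Mn_r(\C)$-valued $1$-form $M$; (ii) the cocycle identity $d\log g_{\alpha\gamma}=d\log g_{\alpha\delta}+g_{\alpha\delta}(d\log g_{\delta\gamma})g_{\delta\alpha}$, which allows logarithmic derivatives sharing the index $\alpha_0$ to be re-chained into the shape $d\log g_{\alpha_0\alpha_1}\wedge d\log g_{\alpha_1\alpha_2}\wedge\cdots$, the conjugating transition functions telescoping under the trace; and (iii) $\sum_\alpha d\rho_\alpha=0$ and $\sum_\alpha\rho_\alpha=1$, used to carry out the summations over the auxiliary indices. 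The sign $(-1)^{(p-1)!}$ then emerges, exactly as in \eqref{Endcupprod}, on commuting the scalar $1$-forms $d\rho_{\alpha_j}$ past the matrix $1$-forms $d\log g_{\alpha_i\alpha_{i+1}}$. Carrying this out requires keeping track of all signs through the graded cyclicity and the repeated reorderings, and checking that every term not already of the final ``chained'' shape either vanishes by (i) or cancels after applying (ii)--(iii); this is elementary but intricate multilinear bookkeeping, and it is where the (somewhat surprising) cancellation of the square terms takes place. A workable way to organize it is to treat first the all-``$d\rho$'' contribution and then show that the contributions containing at least one ``square'' factor supply, via (ii), precisely the re-chaining of the $d\log g$'s needed to bring the former into the stated form.
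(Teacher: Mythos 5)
Your route is the direct-expansion one: compute $\theta^E_\alpha$ from the idempotent, differentiate, and expand $(d\theta^E_{\alpha_0})^{\wedge p}$ into ``$d\rho$''-terms and ``square''-terms. Two problems. First, a sign: carried out correctly, your frame computation gives $\theta^E_\alpha=\sum_\gamma\rho_\gamma\,g_{\alpha\gamma}\,dg_{\gamma\alpha}=-\sum_\gamma\rho_\gamma\,d\log g_{\alpha\gamma}$, i.e.\ the paper's formula \eqref{explgrassconn}, not $+\sum_\gamma\rho_\gamma\,d\log g_{\alpha\gamma}$. (Your plus-sign formula does satisfy the gluing rule as the paper states it, but it does not agree with what $I^E\circ d$ actually produces in your frame; the paper's conventions are themselves not mutually consistent here, so you must fix one convention and check your formula against it.) Since replacing $\theta^E$ by $-\theta^E$ multiplies $\tr\big((d\theta^E)^{\wedge p}\big)$ by $(-1)^p$, this is not cosmetic.

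Second, and more seriously, the heart of the argument is announced but not performed. After expanding, you must show that the $2^p$ families of terms collapse to the chained expression $\rho_{\alpha_0}\tr(d\log g_{\alpha_0\alpha_1}\wedge\cdots\wedge d\log g_{\alpha_{p-1}\alpha_p})\wedge d\rho_{\alpha_1}\wedge\cdots\wedge d\rho_{\alpha_p}$. You assert that the square-type terms ``supply precisely the re-chaining'', but you verify nothing, not even for $p=2$. Note that the all-$d\rho$ contribution alone produces the \emph{anchored} pattern $\tr(d\log g_{\alpha_0\alpha_1}\wedge d\log g_{\alpha_0\alpha_2}\wedge\cdots)$, and converting anchored to chained via $d\log g_{bc}=d\log g_{ba}+\Ad(g_{ba})(d\log g_{ac})$ generates conjugated factors and extra terms that must be disposed of using $\sum_\gamma d\rho_\gamma=0$, $\sum_\gamma\rho_\gamma=1$ and graded cyclicity of the trace; these cancellations are exactly the content of the proposition, so leaving them as ``intricate multilinear bookkeeping'' leaves the proof incomplete. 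The paper's own proof does the hand computation only for $p=2$ and then avoids the bookkeeping structurally: it observes that $d\theta^E$ equals $\boldsymbol\Di\boldsymbol\Ri^{1,1}(\boldsymbol\xi^1)$ for the $\End\Ei$-valued cocycle $\boldsymbol\xi^1_{\alpha\beta}=d\log g_{\alpha\beta}$, uses that $\boldsymbol\Di\boldsymbol\Ri$ sends cup products to wedge products (Lemma \ref{DRlemma} and \eqref{Endcupprod}), so $(d\theta^E)^{\wedge p}=\boldsymbol\Di\boldsymbol\Ri^{p,p}\big((\boldsymbol\xi^1)^{\cup p}\big)$, and concludes by taking the fiber trace. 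Either carry out your cancellation scheme in full (with the corrected sign), or adopt the cup-product mechanism, which packages it once and for all.
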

\begin{proof} 
As an operator on $\Gamma^\infty_0(\M;\Ei)\subset C^\infty(\M)^{\oplus nr}$, the Levi-Civita connection is given by the 1-form $I^E\, dI^E$. Using $\sum_\beta d\rho_\beta=d\big(\sum_\beta\rho_\beta\big)=d\bone=0$, the $(\alpha,\alpha)$-block of $I^E\, dI^E$ becomes $(I^E\, dI^E)_{\alpha\alpha}=-\sum_{\beta}\rho_\alpha\rho_\beta\, dg_{\alpha\beta}\, g_{\beta\alpha}$. The operator $I^E\, dI^E$  defines the local connection 1-forms $\theta_\alpha^E$ via $(I^E\, dI^E)_{\alpha\alpha}=\rho_\alpha\theta_\alpha$, so
\begin{equation}\label{explgrassconn}
\theta_\alpha^E=-\sum_{\beta=1}^n\rho_\beta\, d\log g_{\alpha\beta}\in\Ai^1(U_\alpha)\otimes\Mn_r(\C).
\end{equation}
Applying $\tr_{\C^r}\circ\, d$ and summing over $\alpha$ we have 
\begin{equation}\label{stateforpeq1}
\sum_\alpha\rho_\alpha\tr_{\C^r}(d\theta_\alpha^E)=\sum_{\alpha,\beta}\rho_\alpha\tr_{\C^r}(d\log g_{\alpha\beta})\wedge d\rho_\beta,
\end{equation}
which is the statement for $p=1$. 

For any $p\geq 1$ one could calculate $\tr_{\C^{nr}}\big(I^E\, (d I^E\wedge dI^E)^{\wedge p}\big)$ directly and identify the $\tr_{\C^{nr}}\big((d\theta^E)^{\wedge p}\big)$-part; for $p=2$ for instance, this is the part containing the summand
\begin{align*}
&\tr_{\C^r}(g_{\alpha_0\alpha_1}\, dg_{\alpha_1\alpha_2}\, g_{\alpha_2\alpha_3}\, dg_{\alpha_3\alpha_4}\, g_{\alpha_4\alpha_0})
=\tr_{\C^r}(dg_{\alpha_1\alpha_2}\, g_{\alpha_2\alpha_1}g_{\alpha_1\alpha_3}\, dg_{\alpha_3\alpha_4}\, g_{\alpha_4\alpha_1})
\\&=\tr_{\C^r}\big(dg_{\alpha_1\alpha_2}\, g_{\alpha_2\alpha_1}g_{\alpha_1\alpha_3}\, d(g_{\alpha_3\alpha_1}g_{\alpha_1\alpha_4})\, g_{\alpha_4\alpha_1}\big)
\\&=\tr_{\C^r}(dg_{\alpha_1\alpha_2}\, g_{\alpha_1\alpha_2}^{-1}g_{\alpha_3\alpha_1}^{-1}\, dg_{\alpha_3\alpha_1})+\tr_{\C^r}\big(dg_{\alpha_1\alpha_2}\, g_{\alpha_2\alpha_1}\, dg_{\alpha_1\alpha_4}\, g_{\alpha_4\alpha_1}\big).
\end{align*}
The first term will contribute with zero when we sum over all indices, because we have removed an index $\alpha_4$ which appears as $d\rho_{\alpha_4}$. In the second term we have removed $\alpha_3$ (and $\alpha_0$) but that will just give $\sum_{\alpha_3}\rho_{\alpha_3}=\bone$. So we end up with
\begin{align*}
\tr(d\theta\wedge d\theta)&=\sum_{\alpha_1,\alpha_2,\alpha_4}\rho_{\alpha_1}\tr_{\C^r}\big(dg_{\alpha_1\alpha_2}\, g_{\alpha_2\alpha_1}\wedge d\rho_{\alpha_2}\wedge dg_{\alpha_1\alpha_4}\, g_{\alpha_4\alpha_1}\big)\wedge d\rho_{\alpha_4}
\\&=-\sum_{\alpha_1,\alpha_2,\alpha_4}\rho_{\alpha_1}\tr_{\C^r}\big(dg_{\alpha_1\alpha_2}\, g_{\alpha_2\alpha_1}\, dg_{\alpha_1\alpha_4}\, g_{\alpha_4\alpha_1}\big)\wedge d\rho_{\alpha_2}\wedge d\rho_{\alpha_4}
\end{align*}
which after relabeling gives the stated formula. This computation extends easily to $p\geq 3$ but we can give a cleaner proof which highlights the cup-product structure involved.
For this we shall use the map $\boldsymbol\Di\boldsymbol\Ri^{p,p}$ discussed after Lemma \ref{DRlemma}. 
For later comparison we first rewrite the formula \eqref{explgrassconn} as
$$
\tr(d\theta^E)=\Di\Ri^{1,1}(\xi^1),
$$
where $\xi^1\in\check{Z}^1(\GU;\Ai^1)$ is the 1-cocycle with 
$$
\xi^1_{\alpha\beta}:=\tr_{\C^r}(d\log g_{\alpha\beta}).
$$
Define
$$
\boldsymbol\xi^1_{\alpha\beta}:=d\log g_{\alpha\beta}.
$$
Then $\boldsymbol\xi^1$ satisfies the cocycle condition
\begin{align*}
\boldsymbol\xi_{\alpha\gamma}^1&=dg_{\alpha\gamma}\, g_{\gamma\alpha}
=d(g_{\alpha\beta}g_{\beta\gamma})\, g_{\gamma\beta}g_{\beta\alpha}
=dg_{\alpha\beta}\,g_{\beta\alpha}+g_{\alpha\beta}\, dg_{\beta\gamma}\, g_{\gamma\beta}g_{\beta\alpha}
\\&=\boldsymbol\xi_{\alpha\beta}^1+\Ad(g_{\beta\alpha})(\boldsymbol\xi_{\beta\gamma}^1)
\end{align*}
saying that $\boldsymbol\xi^1$ belongs to $\check{Z}^1(\GU;\Ai^1\otimes\End\Ei)$. 

Now observe that $\boldsymbol\Di\boldsymbol\Ri^{1,1}(\boldsymbol\xi^1)$ coincides with $d\theta^E$. Thus, recalling that $\boldsymbol\Di\boldsymbol\Ri$ maps cup products of cocycles into wedge products, we have
\begin{align*}
\boldsymbol\Di\boldsymbol\Ri^{p,p}((\boldsymbol\xi^1)^{\cup p})&=\boldsymbol\Di\boldsymbol\Ri^{1,1}(\boldsymbol\xi^1)^{\wedge p}
=(d\theta^E)^{\wedge p}.
\end{align*}
Since (cf. \eqref{Endcupprod}) 
\begin{align*}
\tr_{\C^r}\big(\boldsymbol\Di\boldsymbol\Ri^{p,p}((\boldsymbol\xi^1)^{\cup p})\big)&=(-1)^{(p-1)!}\sum_{\alpha_0,\dots,\alpha_p=1}^n\rho_{\alpha_0}\tr_{\C^r}(d\log g_{\alpha_0\alpha_1}\wedge \cdots\wedge d\log g_{\alpha_{p-1}\alpha_p})\wedge d\rho_{\alpha_1}\wedge\cdots\wedge d\rho_{\alpha_p},
\end{align*}
the proof is complete.

\end{proof}
\begin{dfn}\label{smoothAtdef}
The cocycle $\boldsymbol\xi^p:=(\boldsymbol\xi^1)^{\cup p}$, or its scalar-valued version $\xi^p\in\check{Z}^p(\GU;\Ai^p)$ with 
$$
\xi^p_{\alpha_0\cdots\alpha_p}:=(-1)^{(p-1)!}\tr_{\C^r}(d\log g_{\alpha_0\alpha_1}\wedge d\log g_{\alpha_1\alpha_2}\wedge\cdots\wedge d\log g_{\alpha_{p-1}\alpha_p}),
$$
will be called the \textbf{smooth Atiyah $p$-cocycle} of the vector bundle $\Ei$ (with respect to the covering $\GU$).
Sometimes we write $\xi^{E,p}:=\xi^p$ if we want to make explicit reference to the bundle $\Ei$. 
\end{dfn}
The terminology here is by analogy with the well-known Atiyah cocycles of holomorphic vector bundles which we will also need later on, in \S\ref{analytAtiyahsec}.

\subsection{Weil-type expression for $\ch_p:K^0(\M)\to H^{2p}(\M;\Q)$}
Recall that $\check{H}^q(\M;\Fi)=0$ for any $\Ci_\M^\infty$-module $\Fi$ and any $q\geq 1$ \cite[Lemma 4.3.5, Cor. 4.4.5]{Arap2}. For any sheaf $\Fi$ on $\M$ one has $\check{H}^q(\M;\Fi)=0$ if and only if $\check{H}^q(\M;\Fi)=0$ for any open covering $\GU$ of $\M$. 
Denote by $\Zi^k\subset\Ai^k$ the kernel sheaf of the operator $d:\Ai^k\to\Ai^{k+1}$.  From the exact sequence
$$
0\to\C_\M\to\Ci^\infty_\M\overset{d}{\longto}\Ai^1\overset{d}{\longto}\Ai^2\overset{d}{\longto}\cdots,
$$
where all sheaves except the constant sheaf $\C_\M=\Zi^0$ are $\Ci^\infty_\M$-modules, we have \cite[\S IV.6]{Dema1}
\begin{align*}
\check{H}^{2p}(\GU;\C_\M)&\cong\check{H}^{2p-1}(\GU;\Zi^1)\cong\check{H}^{2p-2}(\GU;\Zi^2)\cong\cdots
\\&\cong H^{2p}_{\rm DR}(\GU;\C).
\end{align*}
Suppose now that each intersection $U_{\alpha_0\cdots\alpha_p}\subset\M$ of finitely many elements of the covering $\GU=(U_\alpha)_{\alpha=1,\dots,n}$ is contractible. Then $H^q(\GU;\Fi)=H^q(\M;\Fi)$ for all $\Ci_\M^\infty$-modules $\Fi$, and 
one has
\begin{align}
\check{H^p}(\GU;\Zi^p)&\cong H^{2p}_{\rm DR}(\M;\C)\label{Cechde Rhamiso}.
\end{align}
We now give an explicit description of the isomorphism \eqref{Cechde Rhamiso} which is a direct consequence of \cite[Prop. 9.5]{BoTu1}. See also \cite[\S IV.6, p. 272]{Dema1}.
\begin{Lemma}\label{DRisiso}
Suppose that each intersectionof finitely many elements of the covering $\GU$ is contractible. Then for each $p\in\{1,\dots,p\}$, the map $\Di\Ri^{p,p}:\check{Z}^p(\GU;\Zi^p)\to\Ai^{2p}(\M)$ defined in Lemma \ref{DRlemma} induces a group isomorphism
$$
\check{H^p}(\GU;\Zi^p)\to H^{2p}_{\rm DR}(\M;\C),\qquad [\eta]\to[\Di\Ri^{p,p}(\eta)].
$$
\end{Lemma}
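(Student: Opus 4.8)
The plan is to recognize $\Di\Ri^{p,p}$ as the explicit ``collating'' operator realizing the \v{C}ech--de Rham isomorphism attached to the double complex $C^{p,q}=\check C^q(\GU;\Ai^p)$ (vertical differential $d$, horizontal differential $\delta$), and to extract both well-definedness and bijectivity from the single identity \eqref{dofDReq} of Lemma \ref{DRlemma} together with the standard ``tic-tac-toe'' of \cite[Prop.~9.5]{BoTu1} and \cite[\S IV.6]{Dema1}.

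\emph{Well-definedness.} For $\eta\in\check Z^p(\GU;\Zi^p)$ we have $\delta\eta=0$ and $d\eta=0$, so \eqref{dofDReq} at bidegree $(p,p)$ reads $d\,\Di\Ri^{p,p}(\eta)=0$: such cocycles are sent to closed $2p$-forms. If moreover $\eta=\delta\zeta$ with $\zeta\in\check C^{p-1}(\GU;\Zi^p)$ --- so that $d\zeta=0$ --- then \eqref{dofDReq} at bidegree $(p,p-1)$ collapses to $d\,\Di\Ri^{p,p-1}(\zeta)=(-1)^{p}\Di\Ri^{p,p}(\delta\zeta)=(-1)^p\Di\Ri^{p,p}(\eta)$, so $\Di\Ri^{p,p}(\eta)$ is exact. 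Since $\Di\Ri^{p,p}$ is $\C$-linear this yields a well-defined group homomorphism $\check H^p(\GU;\Zi^p)\to H^{2p}_{\rm DR}(\M;\C)$, $[\eta]\mapsto[\Di\Ri^{p,p}(\eta)]$.

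\emph{Bijectivity.} I would organize this around the total complex $(\mathrm{Tot}^\bullet C^{\bullet,\bullet},D)$, $D=\delta\pm d$ with the standard sign convention. Identity \eqref{dofDReq} says precisely that the family $(\Di\Ri^{p,q})_{p,q}$ assembles into a morphism of complexes $\mathrm{Tot}^\bullet C^{\bullet,\bullet}\to\Ai^\bullet(\M)$, and since $\sum_\alpha\rho_\alpha=\bone$ its composite with the augmentation $\Ai^\bullet(\M)\hookrightarrow C^{\bullet,0}\subset\mathrm{Tot}^\bullet C^{\bullet,\bullet}$ (restriction of global forms to the cover) is the identity of $\Ai^\bullet(\M)$. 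The rows $\check C^\bullet(\GU;\Ai^p)$ are exact in positive \v{C}ech degree because the $\Ci^\infty_\M$-modules $\Ai^p$ are fine, so this augmentation is a quasi-isomorphism; hence $\Di\Ri$ is itself a quasi-isomorphism $\mathrm{Tot}^\bullet C^{\bullet,\bullet}\to\Ai^\bullet(\M)$, and in particular it induces an isomorphism $H^{2p}(\mathrm{Tot}^\bullet C^{\bullet,\bullet})\xrightarrow{\ \sim\ }H^{2p}_{\rm DR}(\M;\C)$. On the other side, contractibility of the finite intersections $U_{\alpha_0\cdots\alpha_q}$ makes the columns $\check C^q(\GU;\Ai^\bullet)$ have vanishing cohomology in positive form-degree (Poincar\'e lemma), and the chain of isomorphisms $\check H^{2p}(\GU;\C_\M)\cong\cdots\cong\check H^p(\GU;\Zi^p)\cong\cdots\cong H^{2p}_{\rm DR}(\M;\C)$ recalled just before the statement identifies $\check H^p(\GU;\Zi^p)$ with $H^{2p}(\mathrm{Tot}^\bullet C^{\bullet,\bullet})$, via the map induced by the inclusion $\check Z^p(\GU;\Zi^p)\hookrightarrow C^{p,p}$ (a $d$-closed, $\delta$-closed $p$-cochain is $D$-closed, and a $\delta$-coboundary out of $\check C^{p-1}(\GU;\Zi^p)$ is a $D$-coboundary since such a cochain is $d$-closed). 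Composing the two isomorphisms sends $[\eta]\mapsto[\eta]_{\mathrm{Tot}}\mapsto\Di\Ri^{p,p}(\eta)$ --- the remaining components of the total cochain $[\eta]_{\mathrm{Tot}}$ being zero --- which is exactly the homomorphism of the previous paragraph; hence that homomorphism is an isomorphism, proving the lemma.

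\emph{Where the work sits.} The only genuinely delicate point is the bookkeeping in the last step: fixing the signs in $D$ so that \eqref{dofDReq} really reads ``$\Di\Ri$ is a morphism of complexes'', and checking that the connecting-homomorphism chain $\check H^p(\GU;\Zi^p)\cong\cdots\cong\check H^{2p}(\GU;\C_\M)$ attached to $0\to\Zi^j\to\Ai^j\xrightarrow{d}\Zi^{j+1}\to0$ is realized inside $\mathrm{Tot}^\bullet C^{\bullet,\bullet}$ by adding $D$-coboundaries, so that it agrees on cohomology with the naive inclusion $\check Z^p(\GU;\Zi^p)\hookrightarrow C^{p,p}$. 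Neither step is deep --- both are precisely the content of \cite[Prop.~9.5]{BoTu1} and \cite[\S IV.6, p.~272]{Dema1}, which I would simply cite. A fully self-contained alternative is to introduce the partition-of-unity homotopy $(K\sigma)_{\alpha_0\cdots\alpha_{q-1}}=\sum_\gamma\rho_\gamma\,\sigma_{\gamma\alpha_0\cdots\alpha_{q-1}}$ on the \v{C}ech complexes of the fine sheaves $\Ai^p$ and verify by induction on $p$ that $p$ iterations of $d\circ K$ carry a representative $\eta\in\check Z^p(\GU;\Zi^p)$ to $\pm\Di\Ri^{p,p}(\eta)$, but that is exactly the routine computation this sketch is meant to bypass.
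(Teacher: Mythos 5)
Your proposal is correct and follows essentially the same route as the paper, which simply derives the lemma from the standard double-complex (\v{C}ech--de Rham) argument of \cite[Prop.~9.5]{BoTu1} and \cite[\S IV.6]{Dema1}; your use of \eqref{dofDReq} for well-definedness and of row/column exactness of the total complex for bijectivity is exactly the content behind those citations.
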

The smooth Atiyah $p$-cocycle $\xi^{E,p}$ (as in Definition \ref{smoothAtdef}) of a smooth vector bundle $\Ei$ takes values in  $\Zi^p$, as a consequence of the property $d(d\log g_{\alpha\beta})=d\log g_{\alpha\beta}\wedge d\log g_{\alpha\beta}$ and the cyclicity of the trace. 

The matrix-valued smooth Atiyah $p$-cocycle $\boldsymbol\xi^{E,p}$ takes values in the $\Ci^\infty_\M$-module $\Ai^p\otimes_{\Ci^\infty_\M}\End\Ei$, but it is not $d$-closed and so it does not define a class in the cohomology group $\check{H}^p(\GU;\Zi^p\otimes_{\Zi^0}\End\Ei)$. Still, we can use the fact that $\boldsymbol\xi^{E,p}$ is a $p$-fold cup product to deduce the following result.

\begin{Lemma}\label{atiyahisotrans}
Let $g_{\alpha\beta}^E$ and $g^F_{\alpha\beta}$ be the transition functions of isomorphic smooth vector bundles $\Ei\cong\Fi$ on $\M$ with respect to some open covering $\GU=(U_\alpha)_{\alpha=1,\dots,n}$ of $\M$. Then the smooth Atiyah $p$-cocycles $\xi^{E,p}$ and $\xi^{F,p}$ belong to the same class in $\check{H}^p(\GU;\Zi^p)$. 
\end{Lemma}
\begin{proof}
Let $g_\alpha^E:\Ei|_{U_\alpha}\to U_\alpha\times\C^r$ be the trivializations of $\Ei$ which relate to the transition functions $g^E_{\alpha\beta}$ via $(g_\alpha^E\circ (g_{\beta}^E)^{-1}(x,v)=(x,g^E_{\alpha\beta}(x)v)$ for $x\in\M$ and $v\in\C^r$.
By assumption there exists a $\Ci^\infty_\M$-module isomorphism $\psi:\Ei\to\Fi$. Recall \cite[Lemma 1.1.3]{Fedo1} that such a $\psi$ gives rise to a collection $(f_\alpha)_{\alpha=1,\dots,n}$ of local matrix-valued functions $f_\alpha\in C^\infty(U_\alpha)\otimes\GeL(r,\C)$, defined by the formula
$$
(x,f_\alpha(x)v)=(g_\alpha^F\circ\psi\circ (g_\alpha^E)^{-1})(x,v),\qquad\forall x\in\M,\, v\in\C^r.
$$
The isomorphism $\psi$ then relates the transition functions via 
$$
g^F_{\alpha\beta}(x)=f_\alpha(x)g^E_{\alpha\beta}(x)f_\beta^{-1}(x).
$$
Therefore,
\begin{align*}
\boldsymbol\xi^{F,1}_{\alpha\beta}&=
\boldsymbol\xi^{E,1}_{\alpha\beta}+d\log f_\alpha-d\log f_\beta.
\end{align*}
Setting $\boldsymbol\zeta_{\alpha\beta}:=d\log f_\alpha-d\log f_\beta$ gives a coboundary in $\check{Z}^1(\GU;\Zi^1\otimes_{\Zi^0}\End\Ei)$. The $d$-closed $1$-forms $\zeta_{\alpha\beta}:=\tr_{\C^r}(\boldsymbol\zeta_{\alpha\beta})$ then define a coboundary in $\check{Z}^1(\GU;\Zi^1)$. Therefore $[\xi^{E,1}]=[\xi^{F,1}]$ in $\check{H}^1(\GU;\Zi^1)$

As remarked after Equation \eqref{Endcupprod}, the \v{C}ech differential $\delta$ is a graded derivation with respect to the cup product defined in \eqref{Endcupprodgen}, implying that the cup product of a cocycle with a coboundary (in either order) is a coboundary. Therefore, for two cocycles $\boldsymbol\xi\in\check{Z}^q(\GU;\Ai^p\otimes\End\Ei)$ and $\boldsymbol\eta\in\check{Z}^{q'}(\GU;\Ai^{p'}\otimes\End\Ei)$ such that $d\tr(\boldsymbol\xi\cup\boldsymbol\eta)=0$, the class $[\tr(\boldsymbol\xi\cup\boldsymbol\eta)]$ in $\check{H}^{q+q'}(\GU;\Zi^{p+p'})$ does not change if we replace $\boldsymbol\xi$ by $\boldsymbol\xi+\boldsymbol\zeta$ for some $d$-closed $q$-coboundary $\boldsymbol\zeta$ (and similarly nothing happens to the class if we add a $d$-closed $q'$-coboundary to $\boldsymbol\eta$). 

Applying the preceding reasoning to $\boldsymbol\xi=\boldsymbol\xi^{E,1}$ and $\boldsymbol\eta=\boldsymbol\xi^{E,p-1}$ gives $[\xi^{E,p}]=[\xi^{F,p}]$ in $\check{H}^p(\GU;\Zi^p)$.
\end{proof}

\begin{thm}\label{Cherrepthm}
For any smooth vector bundle $\Ei$ on $\M$ the closed $2p$-form $\tr\big((d\theta^E)^{\wedge p}\big)$, where $\theta^E$ is the Levi-Civita connection on $\Ei$, represents $p!(-2\pi i)^p$ times the $p$th Chern character of $\Ei$. That is,
\begin{equation}\label{Cherncharexpr}
\frac{1}{p!(-2\pi i)^p}\big[\tr\big((d\theta^E)^{\wedge p}\big)\big]=\ch_p(\Ei)
\end{equation}
in de Rham cohomology.
\end{thm}
\begin{proof} 
Let $\theta=(\theta_\alpha)_{\alpha=1,\dots,n}$ be any connection on $\Ei$, and define
$$
\widetilde{\ch}_p(\theta):=\frac{1}{p!(-2\pi i)^p}\tr\big((d\theta^E)^{\wedge p}\big).
$$
Note first that $[\widetilde{\ch}_p(\theta)]=\ch_p(\Li)$ for any connection $\theta$ on a smooth line bundle $\Li$ over $\M$, because in this case $\theta\wedge\theta=0$. 

We claim that $\widetilde{\ch}_p$ is additive and commutes with pullbacks by smooth maps. Here additivity means that $\widetilde{\ch}_p(\theta)=\widetilde{\ch}_p(\theta^E)+\widetilde{\ch}_p(\theta^F)$ whenever $\theta$ is the connection on a direct sum $\Ei\oplus\Fi$ of smooth vector bundles over $\M$ given by 
$$
\theta:=\theta^E\oplus\theta^F=\begin{pmatrix}
\theta^E&0\\0&\theta^F
\end{pmatrix},
$$
for any connections $\theta^E$ on $\Ei$ and $\theta^F$ on $\Fi$. But 
$$
(d\theta_\alpha)^{\wedge p}=\begin{pmatrix}
(d\theta^E_\alpha)^{\wedge p}&0\\0&(d\theta^F_\alpha)^{\wedge p}
\end{pmatrix},
$$
so taking the trace shows that $\widetilde{\ch}_p(\theta^E\oplus\theta^F)=\widetilde{\ch}_p(\theta^E)+\widetilde{\ch}_p(\theta^F)$, as desired. Next, let $\F$ be a smooth manifold and $\phi:\F\to\M$ a smooth map. Then there is the induced pullback map $\phi^*$ from the $\Ci^\infty_\M$-module $\Ai^{2p}$ to the $\Ci^\infty_\F$-module $\Ai^{2p}$, which we extend to $\End\Ei$-valued forms by applying $\phi^*$ entrywise to local matrix-valued 1-forms. If $\theta$ is any connection on $\Ei$ then $\phi^*\theta$ is a connection on the pullback bundle $\phi^*\Ei$, with respect to the covering of $\F$ be the inverse images $\phi^{-1}(U_\alpha)$. Since $\phi^*$ is multiplicative for the wedge product, we have
$$
\phi^*\tr\big((d\theta)^{\wedge p}\big)=\tr\big((\phi^*(d\theta))^{\wedge p}\big),
$$ 
so that $\phi^*\widetilde{\ch}_p(\theta)=\widetilde{\ch}_p(\phi^*\theta)$. In particular, $\phi^*[\widetilde{\ch}_p(\theta)]:=[\phi^*\widetilde{\ch}_p(\theta)]=[\widetilde{\ch}_p(\phi^*\theta)]$ for the induced map $\phi^*:H^{2p}_{\rm DR}(\M;\C)\to H^{2p}_{\rm DR}(\F;\C)$ on cohomology.

Now take $\theta^E$ to be the Levi-Civita connection on $\Ei$ defined by the idempotent $I^E$ as before.
Let $\F(\Ei)$ be the $r$-flag bundle associated with $\Ei$, and let $\phi:\F(\Ei)\to\M$ be the projection. Then the pullback bundle $\phi^*\Ei$ is isomorphic to a direct sum of smooth line bundles $\Li_1,\dots,\Li_r$ on $\F(\Ei)$ \cite[\S21]{BoTu1},
$$
\phi^*\Ei\cong\Fi:=\bigoplus^r_{j=1}\Li_j.
$$
The Atiyah $p$-cocycle of the vector bundle $\Fi$ has the simple form $\xi^{F,p}=\sum_j\xi^{L_j,p}=\sum_j(\xi^{L_j,1})^{\cup p}$, because $\xi^{L_j,1}=\boldsymbol\xi^{L_j,1}$. By Lemma \ref{atiyahisotrans} we have $[\xi^{\phi^*E,p}]=[\xi^{F,p}]$ and hence $[\Di\Ri^{p,p}(\xi^{\phi^*E,p})]=[\Di\Ri^{p,p}(\xi^{F,p})]$ by Lemma \ref{DRisiso}. Proposition \ref{calcofsmoothChern} gives $p!(-2\pi i)^p\widetilde{\ch}_p(\phi^*\theta^E)=\Di\Ri^{p,p}(\xi^{\phi^*E,p})$. In total, 
\begin{align*}
\phi^*[\widetilde{\ch}_p(\theta^E)]&=[\widetilde{\ch}_p(\phi^*\theta^E)]=\frac{1}{p!(-2\pi i)^p}
[\Di\Ri^{p,p}(\xi^{\phi^*E,p})]=\frac{1}{p!(-2\pi i)^p}[\Di\Ri^{p,p}(\xi^{F,p})]
\\&=\frac{1}{p!(-2\pi i)^p}\sum_{j=1}^r[\Di\Ri^{1,1}(\xi^{L_j,1})^{\wedge p}]=\frac{1}{p!}\sum_{j=1}^r\ch_1(\Li_j)^p
=\ch_p(\Fi)=\ch_p(\phi^*\Ei)
\\&=\phi^*\ch_p(\Ei).
\end{align*}
Since $\phi^*:H^{2p}(\M;\Q)\to H^{2p}(\F(\Ei);\Q)$ is injective, this gives $[\widetilde{\ch}_p(\theta^E)]=\ch_p(\Ei)$ in $H^{2p}(\M;\Q)$.  
\end{proof}

Write $H^{\rm 2*}(\M;\Q):=\bigoplus_{p\geq 0}H^{\rm 2p}(\M;\Q)$. Recall that the Chern character
\begin{equation}\label{cherniso}
\ch_*:K^0(\M)\otimes_\Z\Q\to H^{\rm 2*}(\M;\Q)
\end{equation}
is a ring isomorphism (see 
\cite[\S2, p. 19]{AtHi2}, \cite[Thm. 3]{Karo2}, \cite[Prop. 4.5]{Hatc1} for three different proofs of this fact). 
We will now summarize the above results in a way suitable for our purpose. To simplify the formulation we use the notation $\xi^{E,0}:=1$ where $1=(1_\alpha)_{\alpha=1,\dots,n}$ is the $\Ci^\infty_\M$-valued $0$-cocycle with $1_\alpha\in C^\infty(U_\alpha)$ the restriction to $U_\alpha$ of the identity function $\bone\in C^\infty(\M)$. Thus $\Di^{0,0}(\xi^{E,0}):=\sum_\alpha\rho_\alpha 1_\alpha=\bone$ and $[\Di^{0,0}(\xi^{E,0})]=[\bone]=1\in H^0(\M;\Q)\cong\Q^{b_0(\M)}$. 

\begin{thm}\label{chernisopropan}
Under the inverse of the ring isomorphism 
$$
\bigoplus_{p\geq 0}\check{H}^p(\GU;\Zi^p)\to H^{\rm 2*}_{\rm DR}(\M;\C),\qquad ([\eta^p])_{p\geq 0}\to\sum_{p\geq 0}[\Di\Ri^{p,p}(\eta^p)],
$$
the Chern character $\ch_*(\Ei)$ of a smooth vector bundle $\Ei$ on $\M$ is represented by the normalized sequence
$(1/p!(-2\pi i)^p\xi^p)_{p\geq 0}$ of smooth Atiyah cocycles of $\Ei$, with respect to any choice of covering of $\M$ by open subsets with contractible intersections and any choice of transition functions for the isomorphism class of the bundle $\Ei$.   

Indeed, the ring homomorphism 
\begin{equation}\label{chernisoDR}
\widetilde{\ch}_*:K^0(\M)\otimes_\Z\Q\to H^{\rm 2*}(\M;\Q),\qquad \sum_jq_j[\Ei_j]\to\sum_{p\geq 0}\frac{1}{p!(-2\pi i)^p}\sum_jq_j[\Di\Ri^{p,p}(\xi^{E_j,p})]
\end{equation}
coincides, for any choice of subordinate partition of unity, with the usual Chern character \eqref{cherniso}. Consequently, any cocycle $\eta\in\check{Z}^p(\GU;\Zi^p)$ such that the $2p$-form $(-2\pi i)^{-p}\Di\Ri^{p,p}(\eta)$ has rational periods is cohomologous to a $\Q$-linear combination of Atiyah $p$-cocycles $\xi^{E,p}$ of smooth vector bundle $\Ei$ on $\M$. 
\end{thm}
Note that $p!\ch_p(\Ei)$ is an integral class for each $\Ei$. Hence, if $\eta\in\check{Z}^p(\GU;\Zi^p)$ is such that $(-2\pi i)^{-p}\Di\Ri^{p,p}(\eta)$ has \emph{integral} periods then from Theorem \ref{chernisopropan} we get that $\eta$ is a $\Z$-linear combination of Atiyah cocycles, and by taking direct sums of vector bundles if necessary we see that there are two smooth vector bundles $\Ei$ and $\Fi$ on $\M$ such that $\eta=\xi^{E,p}-\xi^{F,p}$. For odd $p$ we could replace $\Ei$ by $\Ei\oplus\Fi^*$ to obtain $\eta=\xi^{E\oplus F^*,p}$.

Recall that $\check{H}^q(\M;\Fi)=0$ for any $\Ci_\M^\infty$-module $\Fi$ and any $q\geq 1$ \cite[Lemma 4.3.5, Cor. 4.4.5]{Arap2}. For any sheaf $\Fi$ on $\M$ one has $\check{H}^q(\M;\Fi)=0$ if and only if $\check{H}^q(\M;\Fi)=0$ for any open covering $\GU$ of $\M$. 
Denote by $\Zi^k\subset\Ai^k$ the kernel sheaf of the operator $d:\Ai^k\to\Ai^{k+1}$.  From the exact sequence
$$
0\to\C_\M\to\Ci^\infty_\M\overset{d}{\longto}\Ai^1\overset{d}{\longto}\Ai^2\overset{d}{\longto}\cdots,
$$
where all sheaves except the constant sheaf $\C_\M=\Zi^0$ are $\Ci^\infty_\M$-modules, we have \cite[\S IV.6]{Dema1}
\begin{align*}
\check{H}^{2p}(\GU;\C_\M)&\cong\check{H}^{2p-1}(\GU;\Zi^1)\cong\check{H}^{2p-2}(\GU;\Zi^2)\cong\cdots
\\&\cong H^{2p}_{\rm DR}(\GU;\C).
\end{align*}
Suppose now that each intersection $U_{\alpha_0\cdots\alpha_p}\subset\M$ of finitely many elements of the covering $\GU=(U_\alpha)_{\alpha=1,\dots,n}$ is contractible. Then $H^q(\GU;\Fi)=H^q(\M;\Fi)$ for all $\Ci_\M^\infty$-modules $\Fi$, and 
one has
\begin{align}
\check{H^p}(\GU;\Zi^p)&\cong H^{2p}_{\rm DR}(\M;\C)\label{Cechde Rhamiso}.
\end{align}
We now give an explicit description of the isomorphism \eqref{Cechde Rhamiso} which is a direct consequence of \cite[Prop. 9.5]{BoTu1}. See also \cite[\S IV.6, p. 272]{Dema1}.
\begin{Lemma}\label{DRisiso}
Suppose that each intersection of finitely many elements of the covering $\GU$ is contractible. Then for each $p\in\{1,\dots,p\}$, the map $\Di\Ri^{p,p}:\check{Z}^p(\GU;\Zi^p)\to\Ai^{2p}(\M)$ defined in Lemma \ref{DRlemma} induces a group isomorphism
$$
\check{H^p}(\GU;\Zi^p)\to H^{2p}_{\rm DR}(\M;\C),\qquad [\eta]\to[\Di\Ri^{p,p}(\eta)].
$$
\end{Lemma}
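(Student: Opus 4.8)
The statement to prove is Lemma~\ref{DRisiso}: under the contractibility hypothesis on all finite intersections, the map $\Di\Ri^{p,p}$ descends to an isomorphism $\check{H}^p(\GU;\Zi^p)\to H^{2p}_{\rm DR}(\M;\C)$. Here is my plan.

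\textbf{Step 1: $\Di\Ri^{p,p}$ is well-defined on cohomology.} By Lemma~\ref{DRlemma}(ii), for $\xi\in\check{Z}^p(\GU;\Zi^p)$ one has $d\xi=0$ and $\delta\xi=0$, so formula \eqref{dofDReq} gives $d\,\Di\Ri^{p,p}(\xi)=0$; thus $\Di\Ri^{p,p}(\xi)$ is a closed $2p$-form and defines a class in $H^{2p}_{\rm DR}(\M;\C)$. If $\xi=\delta\zeta$ for some $\zeta\in\check{C}^{p-1}(\GU;\Zi^p)$ (note $d\zeta=0$ automatically since $\zeta$ is $\Zi^p$-valued and $d\delta\zeta = \delta d\zeta$ combined with $\zeta$ valued in the kernel of $d$), then \eqref{dofDReq} applied at bidegree $(p,p-1)$ reads $d\,\Di\Ri^{p,p-1}(\zeta)=(-1)^p\Di\Ri^{p,p}(\delta\zeta)+(-1)^{p-1}\Di\Ri^{p+1,p-1}(d\zeta)=(-1)^p\Di\Ri^{p,p}(\xi)$, so $\Di\Ri^{p,p}(\xi)$ is exact. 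Hence the induced map on cohomology is well-defined and linear.

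\textbf{Step 2: Identify the induced map with the classical \v{C}ech--de Rham isomorphism.} The cleanest route is to recall that the chain of isomorphisms
$$
\check{H}^p(\GU;\Zi^p)\cong\check{H}^{p+1}(\GU;\Zi^{p-1})\cong\cdots\cong\check{H}^{2p}(\GU;\C_\M)\cong H^{2p}_{\rm DR}(\M;\C),
$$
already displayed in \eqref{Cechde Rhamiso}, is an isomorphism because each short exact sequence $0\to\Zi^{k}\to\Ai^{k}\overset{d}{\to}\Zi^{k+1}\to 0$ has $\Ai^k$ acyclic (a $\Ci^\infty_\M$-module, hence $\check{H}^{\geq1}(\GU;\Ai^k)=0$ by \cite[Lemma 4.3.5, Cor. 4.4.5]{Arap2}, using contractibility so that $\check{H}^\bullet(\GU;-)=\check{H}^\bullet(\M;-)$ on these modules), so the connecting maps are all isomorphisms, and the last isomorphism is Weil's theorem \cite{Weil1}. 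It then suffices to check that $[\eta]\mapsto[\Di\Ri^{p,p}(\eta)]$ agrees with this composite. This is exactly the content of \cite[Prop.~9.5]{BoTu1} (and \cite[\S IV.6, p.~272]{Dema1}): the generalized Mayer--Vietoris/partition-of-unity argument there produces, starting from a \v{C}ech cocycle, a de Rham form whose explicit shape is precisely $\sum_{\alpha_0,\dots,\alpha_p}\rho_{\alpha_0}\eta_{\alpha_0\cdots\alpha_p}\wedge d\rho_{\alpha_1}\wedge\cdots\wedge d\rho_{\alpha_p}$. Concretely, I would run the ``descent'' once: given $\eta\in\check{Z}^p(\GU;\Zi^p)$, on each $U_{\alpha_0}$ write $\eta_{\alpha_0\cdots\alpha_p}$, antisymmetrized against the $d\rho$'s, and use $\sum_\beta d\rho_\beta=0$ together with the cocycle identity $\delta\eta=0$ to show that $\Di\Ri^{p,p}(\eta)$ restricted to the relevant opens differs from a locally-defined primitive of the image cocycle in $\Zi^{p-1}$ by a $d$-exact term — i.e. reproduce the single inductive step of \cite[Prop.~9.5]{BoTu1}. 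Since each connecting isomorphism in the chain corresponds to exactly one such step and the composite of all $p$ steps lands in Weil's isomorphism, the map $[\eta]\mapsto[\Di\Ri^{p,p}(\eta)]$ is the full \v{C}ech--de Rham isomorphism and in particular bijective.

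\textbf{Main obstacle.} Everything reduces to matching the explicit formula for $\Di\Ri^{p,p}$ with the implicit isomorphism coming from the acyclic resolution, so the only real work — and the step most prone to sign errors — is the bookkeeping in the single descent step: verifying that the partition-of-unity homotopy used to split $\check{C}^\bullet(\GU;\Ai^k)\overset{d}{\to}\check{C}^\bullet(\GU;\Zi^{k+1})$ produces the stated wedge-with-$d\rho$ expression, with the correct signs $(-1)^{q+1}$ and $(-1)^q$ matching those in \eqref{dofDReq}. Once that one step is pinned down (as in \cite[Lemma 8.58]{RoSa1} or \cite[\S1]{ToTo1}, already invoked in the proof of Lemma~\ref{DRlemma}), iterating it $p$ times and composing with Weil's isomorphism gives the result with no further difficulty; injectivity and surjectivity are then inherited from the classical statement and require no separate argument.
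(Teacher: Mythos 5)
Your proposal is correct and follows essentially the same route as the paper, which proves nothing beyond noting that the statement is a direct consequence of the collating formula \cite[Prop.~9.5]{BoTu1} (see also \cite[\S IV.6]{Dema1}): well-definedness via Lemma \ref{DRlemma}(ii) and identification of $[\eta]\mapsto[\Di\Ri^{p,p}(\eta)]$ with the composite of the connecting isomorphisms from the resolutions $0\to\Zi^k\to\Ai^k\to\Zi^{k+1}\to0$ and Weil's isomorphism. Your added detail (the exactness computation for coboundaries and the acknowledged sign bookkeeping in the descent step) is consistent with the cited references and introduces no gap.
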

\begin{Remark}[Assumptions on $\M$]
Lemma \ref{DRisiso} is the first statement in this paper where $\M$ cannot be arbitrary, because it is assumed to possess a \emph{finite} open covering with special properties (so that every smooth vector bundle can be trivialized by the same covering). The assumption is satisfied by any compact $\M$. 
\end{Remark}
The smooth Atiyah $p$-cocycle $\xi^{E,p}$ (as in Definition \ref{smoothAtdef}) of a smooth vector bundle $\Ei$ takes values in  $\Zi^p$, as a consequence of the property $d(d\log g_{\alpha\beta})=d\log g_{\alpha\beta}\wedge d\log g_{\alpha\beta}$ and the cyclicity of the trace. 

The matrix-valued smooth Atiyah $p$-cocycle $\boldsymbol\xi^{E,p}$ takes values in the $\Ci^\infty_\M$-module $\Ai^p\otimes_{\Ci^\infty_\M}\End\Ei$, but it is not $d$-closed and so it does not define a class in the cohomology group $\check{H}^p(\GU;\Zi^p\otimes_{\Zi^0}\End\Ei)$. Still, we can use the fact that $\boldsymbol\xi^{E,p}$ is a $p$-fold cup product to deduce the following result.

\begin{Lemma}\label{atiyahisotrans}
Let $g_{\alpha\beta}^E$ and $g^F_{\alpha\beta}$ be the transition functions of isomorphic smooth vector bundles $\Ei\cong\Fi$ on $\M$ with respect to some open covering $\GU=(U_\alpha)_{\alpha=1,\dots,n}$ of $\M$. Then the smooth Atiyah $p$-cocycles $\xi^{E,p}$ and $\xi^{F,p}$ belong to the same class in $\check{H}^p(\GU;\Zi^p)$. 
\end{Lemma}
\begin{proof}
Let $g_\alpha^E:\Ei|_{U_\alpha}\to U_\alpha\times\C^r$ be the trivializations of $\Ei$ which relate to the transition functions $g^E_{\alpha\beta}$ via $(g_\alpha^E\circ (g_{\beta}^E)^{-1}(x,v)=(x,g^E_{\alpha\beta}(x)v)$ for $x\in\M$ and $v\in\C^r$.
By assumption there exists a $\Ci^\infty_\M$-module isomorphism $\psi:\Ei\to\Fi$. Recall \cite[Lemma 1.1.3]{Fedo1} that such a $\psi$ gives rise to a collection $(f_\alpha)_{\alpha=1,\dots,n}$ of local matrix-valued functions $f_\alpha\in C^\infty(U_\alpha)\otimes\GeL(r,\C)$, defined by the formula
$$
(x,f_\alpha(x)v)=(g_\alpha^F\circ\psi\circ (g_\alpha^E)^{-1})(x,v),\qquad\forall x\in\M,\, v\in\C^r.
$$
The isomorphism $\psi$ then relates the transition functions via 
$$
g^F_{\alpha\beta}(x)=f_\alpha(x)g^E_{\alpha\beta}(x)f_\beta^{-1}(x).
$$
We have
\begin{align*}
\boldsymbol\xi^{F,1}_{\alpha\beta}-\boldsymbol\xi^{E,1}_{\alpha\beta}&=\Ad(g_{\alpha\beta}^F)(d\log f_\beta)-\Ad(g_{\alpha\beta}^E)(d\log f_\alpha).
\end{align*}
From now on we make the transition functions $\Ad(g_{\alpha\beta}^E)$ and $\Ad(g_{\alpha\beta}^F)$ for $\End\Ei\cong\End\Fi$ implicit, since they will disappear after tracing. Thus,
\begin{align*}
\boldsymbol\xi^{F,1}_{\alpha\beta}&=
\boldsymbol\xi^{E,1}_{\alpha\beta}+d\log f_\alpha-d\log f_\beta.
\end{align*}
Setting $\boldsymbol\zeta_{\alpha\beta}:=d\log f_\alpha-d\log f_\beta$ gives a coboundary in $\check{Z}^1(\GU;\Zi^1\otimes_{\Zi^0}\End\Ei)$. The $d$-closed $1$-forms $\zeta_{\alpha\beta}:=\tr_{\C^r}(\boldsymbol\zeta_{\alpha\beta})$ then define a coboundary in $\check{Z}^1(\GU;\Zi^1)$. Therefore $[\xi^{E,1}]=[\xi^{F,1}]$ in $\check{H}^1(\GU;\Zi^1)$

As remarked after Equation \eqref{Endcupprod}, the \v{C}ech differential $\delta$ is a graded derivation with respect to the cup product defined in \eqref{Endcupprodgen}, implying that the cup product of a cocycle with a coboundary (in either order) is a coboundary. Therefore, for two cocycles $\boldsymbol\xi\in\check{Z}^q(\GU;\Ai^p\otimes\End\Ei)$ and $\boldsymbol\eta\in\check{Z}^{q'}(\GU;\Ai^{p'}\otimes\End\Ei)$ such that $d\tr(\boldsymbol\xi\cup\boldsymbol\eta)=0$, the class $[\tr(\boldsymbol\xi\cup\boldsymbol\eta)]$ in $\check{H}^{q+q'}(\GU;\Zi^{p+p'})$ does not change if we replace $\boldsymbol\xi$ by $\boldsymbol\xi+\boldsymbol\zeta$ for some $d$-closed $q$-coboundary $\boldsymbol\zeta$ (and similarly nothing happens to the class if we add a $d$-closed $q'$-coboundary to $\boldsymbol\eta$). 

Applying the preceding reasoning to $\boldsymbol\xi=\boldsymbol\xi^{E,1}$ and $\boldsymbol\eta=\boldsymbol\xi^{E,p-1}$ gives $[\xi^{E,p}]=[\xi^{F,p}]$ in $\check{H}^p(\GU;\Zi^p)$.
\end{proof}

\begin{thm}\label{Cherrepthm}
For any smooth vector bundle $\Ei$ on $\M$ the closed $2p$-form $\tr\big((d\theta^E)^{\wedge p}\big)$, where $\theta^E$ is the Levi-Civita connection on $\Ei$, represents $p!(-2\pi i)^p$ times the $p$th Chern character of $\Ei$. That is,
\begin{equation}\label{Cherncharexpr}
\frac{1}{p!(-2\pi i)^p}\big[\tr\big((d\theta^E)^{\wedge p}\big)\big]=\ch_p(\Ei)
\end{equation}
in de Rham cohomology.
\end{thm}
\begin{proof} 
Let $\theta=(\theta_\alpha)_{\alpha=1,\dots,n}$ be any connection on $\Ei$, and define
$$
\widetilde{\ch}_p(\theta):=\frac{1}{p!(-2\pi i)^p}\tr\big((d\theta^E)^{\wedge p}\big).
$$
Note first that $[\widetilde{\ch}_p(\theta)]=\ch_p(\Li)$ for any connection $\theta$ on a smooth line bundle $\Li$ over $\M$, because in this case $\theta\wedge\theta=0$. 

We claim that $\widetilde{\ch}_p$ is additive and commutes with pullbacks by smooth maps. Here additivity means that $\widetilde{\ch}_p(\theta)=\widetilde{\ch}_p(\theta^E)+\widetilde{\ch}_p(\theta^F)$ whenever $\theta$ is the connection on a direct sum $\Ei\oplus\Fi$ of smooth vector bundles over $\M$ given by 
$$
\theta:=\theta^E\oplus\theta^F=\begin{pmatrix}
\theta^E&0\\0&\theta^F
\end{pmatrix},
$$
for any connections $\theta^E$ on $\Ei$ and $\theta^F$ on $\Fi$. But 
$$
(d\theta_\alpha)^{\wedge p}=\begin{pmatrix}
(d\theta^E_\alpha)^{\wedge p}&0\\0&(d\theta^F_\alpha)^{\wedge p}
\end{pmatrix},
$$
so taking the trace shows that $\widetilde{\ch}_p(\theta^E\oplus\theta^F)=\widetilde{\ch}_p(\theta^E)+\widetilde{\ch}_p(\theta^F)$, as desired. Next, let $\F$ be a smooth manifold and $\phi:\F\to\M$ a smooth map. Then there is the induced pullback map $\phi^*$ from the $\Ci^\infty_\M$-module $\Ai^{2p}$ to the $\Ci^\infty_\F$-module $\Ai^{2p}$, which we extend to $\End\Ei$-valued forms by applying $\phi^*$ entrywise to local matrix-valued 1-forms. If $\theta$ is any connection on $\Ei$ then $\phi^*\theta$ is a connection on the pullback bundle $\phi^*\Ei$, with respect to the covering of $\F$ be the inverse images $\phi^{-1}(U_\alpha)$. Since $\phi^*$ is multiplicative for the wedge product, we have
$$
\phi^*\tr\big((d\theta)^{\wedge p}\big)=\tr\big((\phi^*(d\theta))^{\wedge p}\big),
$$ 
so that $\phi^*\widetilde{\ch}_p(\theta)=\widetilde{\ch}_p(\phi^*\theta)$. In particular, $\phi^*[\widetilde{\ch}_p(\theta)]:=[\phi^*\widetilde{\ch}_p(\theta)]=[\widetilde{\ch}_p(\phi^*\theta)]$ for the induced map $\phi^*:H^{2p}_{\rm DR}(\M;\C)\to H^{2p}_{\rm DR}(\F;\C)$ on cohomology.

Now take $\theta^E$ to be the Levi-Civita connection on $\Ei$ defined by the idempotent $I^E$ as before.
Let $\F(\Ei)$ be the $r$-flag bundle associated with $\Ei$, and let $\phi:\F(\Ei)\to\M$ be the projection. Then the pullback bundle $\phi^*\Ei$ is isomorphic to a direct sum of smooth line bundles $\Li_1,\dots,\Li_r$ on $\F(\Ei)$ \cite[\S21]{BoTu1},
$$
\phi^*\Ei\cong\Fi:=\bigoplus^r_{j=1}\Li_j.
$$
The Atiyah $p$-cocycle of the vector bundle $\Fi$ has the simple form $\xi^{F,p}=\sum_j\xi^{L_j,p}=\sum_j(\xi^{L_j,1})^{\cup p}$, because $\xi^{L_j,1}=\boldsymbol\xi^{L_j,1}$. By Lemma \ref{atiyahisotrans} we have $[\xi^{\phi^*E,p}]=[\xi^{F,p}]$ and hence $[\Di\Ri^{p,p}(\xi^{\phi^*E,p})]=[\Di\Ri^{p,p}(\xi^{F,p})]$ by Lemma \ref{DRisiso}. Proposition \ref{calcofsmoothChern} gives $p!(-2\pi i)^p\widetilde{\ch}_p(\phi^*\theta^E)=\Di\Ri^{p,p}(\xi^{\phi^*E,p})$. In total, 
\begin{align*}
\phi^*[\widetilde{\ch}_p(\theta^E)]&=[\widetilde{\ch}_p(\phi^*\theta^E)]=\frac{1}{p!(-2\pi i)^p}
[\Di\Ri^{p,p}(\xi^{\phi^*E,p})]=\frac{1}{p!(-2\pi i)^p}[\Di\Ri^{p,p}(\xi^{F,p})]
\\&=\frac{1}{p!(-2\pi i)^p}\sum_{j=1}^r[\Di\Ri^{1,1}(\xi^{L_j,1})^{\wedge p}]=\frac{1}{p!}\sum_{j=1}^r\ch_1(\Li_j)^p
=\ch_p(\Fi)=\ch_p(\phi^*\Ei)
\\&=\phi^*\ch_p(\Ei).
\end{align*}
Since $\phi^*:H^{2p}(\M;\Q)\to H^{2p}(\F(\Ei);\Q)$ is injective, this gives $[\widetilde{\ch}_p(\theta^E)]=\ch_p(\Ei)$ in $H^{2p}(\M;\Q)$.  
\end{proof}

Write $H^{\rm 2*}(\M;\Q):=\bigoplus_{p\geq 0}H^{\rm 2p}(\M;\Q)$. Recall that the Chern character
\begin{equation}\label{cherniso}
\ch_*:K^0(\M)\otimes_\Z\Q\to H^{\rm 2*}(\M;\Q)
\end{equation}
is a ring isomorphism (see 
\cite[\S2, p. 19]{AtHi2}, \cite[Thm. 3]{Karo2}, \cite[Prop. 4.5]{Hatc1} for three different proofs of this fact). 
We will now summarize the above results in a way suitable for our purpose. To simplify the formulation we use the notation $\xi^{E,0}:=1$ where $1=(1_\alpha)_{\alpha=1,\dots,n}$ is the $\Ci^\infty_\M$-valued $0$-cocycle with $1_\alpha\in C^\infty(U_\alpha)$ the restriction to $U_\alpha$ of the identity function $\bone\in C^\infty(\M)$. Thus $\Di^{0,0}(\xi^{E,0}):=\sum_\alpha\rho_\alpha 1_\alpha=\bone$ and $[\Di^{0,0}(\xi^{E,0})]=[\bone]=1\in H^0(\M;\Q)\cong\Q^{b_0(\M)}$. 

In the following theorem we will again need the assumption on $\M$ that there is a covering $\GU=(U_\alpha)_{\alpha=1,\dots,n}$ which is both finite and has contractible finite intersections among its elements. 
\begin{thm}\label{chernisopropan}
Under the inverse of the ring isomorphism 
$$
\bigoplus_{p\geq 0}\check{H}^p(\GU;\Zi^p)\to H^{\rm 2*}_{\rm DR}(\M;\C),\qquad ([\eta^p])_{p\geq 0}\to\sum_{p\geq 0}[\Di\Ri^{p,p}(\eta^p)],
$$
the Chern character $\ch_*(\Ei)$ of a smooth vector bundle $\Ei$ on $\M$ is represented by the normalized sequence
$(1/p!(-2\pi i)^p\xi^p)_{p\geq 0}$ of smooth Atiyah cocycles of $\Ei$, with respect to any choice of covering of $\M$ by open subsets with contractible intersections and any choice of transition functions for the isomorphism class of the bundle $\Ei$.   

Indeed, the ring homomorphism 
\begin{equation}\label{chernisoDR}
\widetilde{\ch}_*:K^0(\M)\otimes_\Z\Q\to H^{\rm 2*}(\M;\Q),\qquad \sum_jq_j[\Ei_j]\to\sum_{p\geq 0}\frac{1}{p!(-2\pi i)^p}\sum_jq_j[\Di\Ri^{p,p}(\xi^{E_j,p})]
\end{equation}
coincides, for any choice of subordinate partition of unity, with the usual Chern character \eqref{cherniso}. Consequently, any cocycle $\eta\in\check{Z}^p(\GU;\Zi^p)$ such that the $2p$-form $(-2\pi i)^{-p}\Di\Ri^{p,p}(\eta)$ has rational periods is cohomologous to a $\Q$-linear combination of Atiyah $p$-cocycles $\xi^{E,p}$ of smooth vector bundle $\Ei$ on $\M$. 
\end{thm}
Note that $p!\ch_p(\Ei)$ is an integral class for each $\Ei$. Hence, if $\eta\in\check{Z}^p(\GU;\Zi^p)$ is such that $(-2\pi i)^{-p}\Di\Ri^{p,p}(\eta)$ has \emph{integral} periods then from Theorem \ref{chernisopropan} we get that $\eta$ is a $\Z$-linear combination of Atiyah cocycles, and by taking direct sums of vector bundles if necessary we see that there are two smooth vector bundles $\Ei$ and $\Fi$ on $\M$ such that $\eta=\xi^{E,p}-\xi^{F,p}$. For odd $p$ we could replace $\Ei$ by $\Ei\oplus\Fi^*$ to obtain $\eta=\xi^{E\oplus F^*,p}$.

\subsection{Exact forms versus coboundaries}

In this section we show that one can express every global exact $2p$-form on $\M$ as $\Di\Ri^{p,p}(\xi^{E,p})$ for some trivial smooth vector bundle $\Ei$. 

Throughout, assume that that every intersection of finitely many open set in the covering $\GU=(U_\alpha)_{\alpha=1,\dots,n}$ is contractible. We shall need a theorem of Narasimhan--Ramanan \cite{NaRa1} in the following version.  
\begin{Lemma}\label{strongNaRalemma}
Let $\Ei$ be a smooth vector bundle over $\M$. Then every connection $\theta$ on $\Ei$ is gauge equivalent to the Levi-Civita connection
$$
\theta^F_\alpha=\sum_\beta\rho_\beta\, d\log g^F_{\alpha\beta}
$$ 
of some smooth vector bundle $\Fi$ isomorphic to $\Ei$. More precisely, if $g^F_{\alpha\beta}=f_\alpha g_{\alpha\beta}^Ef_\beta^{-1}$ and $\zeta_\alpha:=df_\alpha\, f_\alpha^{-1}$, then 
$$
\theta_\alpha=\theta^F_\alpha+\zeta_\alpha.
$$
The same partition of unity $(\rho_\alpha)_{\alpha=1,\dots,n}$ works for all connections.
\end{Lemma}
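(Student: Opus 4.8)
The plan is to reduce the statement to the explicit description of the Levi--Civita connection given in Proposition~\ref{calcofsmoothChern} together with Lemma~\ref{Fedosovlemma}'s construction of the idempotent $I^E$. Recall that a connection on $\Ei$ is a family $\theta=(\theta_\alpha)_{\alpha=1,\dots,n}$ of $\Mn_r(\C)$-valued $1$-forms on $U_\alpha$ subject to the gluing rule $\theta_\alpha=d\log g_{\alpha\beta}+\Ad(g_{\beta\alpha})\theta_\beta$ on $U_{\alpha\beta}$. The Levi--Civita connection of a bundle $\Fi$ with transition functions $g^F_{\alpha\beta}$ and the \emph{same} partition of unity $(\rho_\alpha)$ is, by \eqref{explgrassconn}, the family $\theta^F_\alpha=-\sum_\beta\rho_\beta\,d\log g^F_{\alpha\beta}$; note the sign in the statement of the lemma is opposite to \eqref{explgrassconn}, so the first thing I would do is fix conventions (the lemma as typeset should read $\theta^F_\alpha=-\sum_\beta\rho_\beta\,d\log g^F_{\alpha\beta}$, with the displayed formula in \S\ref{smoothsection} being the correct one, and $\zeta_\alpha$ adjusted in sign accordingly). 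Modulo this bookkeeping the content is: any connection differs from a Levi--Civita connection by a gauge term that is itself a Levi--Civita-type expression.

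First I would write $g^F_{\alpha\beta}:=f_\alpha g^E_{\alpha\beta}f_\beta^{-1}$ for a family $(f_\alpha)$ of $\GeL(r,\C)$-valued smooth functions on $U_\alpha$ to be determined, and compute how the Levi--Civita $1$-form transforms: using $d\log(f_\alpha g^E_{\alpha\beta}f_\beta^{-1})=df_\alpha f_\alpha^{-1}+\Ad(f_\alpha)\big(d\log g^E_{\alpha\beta}+\Ad(g^E_{\alpha\beta})(-df_\beta f_\beta^{-1}\cdot(-1))\big)$ (expanded carefully), one finds that $\theta^F_\alpha$ equals $\theta^E_\alpha$ plus a correction built from $\zeta_\alpha:=df_\alpha f_\alpha^{-1}$ and the $\rho_\beta$'s. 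The key algebraic point is that $\sum_\beta\rho_\beta=\bone$ forces the $\beta$-dependent correction terms to collapse: the term $\sum_\beta\rho_\beta\,\zeta_\alpha=\zeta_\alpha$ survives and the cross terms reorganize into the conjugation action on $\theta^E$ that is already built into the definition of a connection. So the computation yields exactly $\theta_\alpha=\theta^F_\alpha+\zeta_\alpha$ once $f_\alpha$ is chosen so that $\zeta_\alpha=\theta_\alpha-\theta^F_\alpha$, i.e. so that the $f_\alpha$ solve the ODE $df_\alpha=(\theta_\alpha-\theta^F_\alpha)f_\alpha$ on $U_\alpha$.

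The main obstacle is precisely the solvability of this last equation: given $\theta$, we must produce $(f_\alpha)$ with $df_\alpha\,f_\alpha^{-1}=\theta_\alpha-\theta^F_\alpha$ \emph{and} with the $g^F_{\alpha\beta}=f_\alpha g^E_{\alpha\beta}f_\beta^{-1}$ still satisfying the cocycle identity (so that $\Fi$ is a genuine bundle). This is the Narasimhan--Ramanan theorem on the existence of a universal connection / classifying-map description: every connection on $\Ei$ is pulled back from the universal connection on the Grassmannian, and chasing through Lemma~\ref{Fedosovlemma}'s dictionary between bundles and idempotents translates ``pullback along a classifying map'' into ``Levi--Civita connection of the bundle with the modified transition functions.'' I would invoke \cite{NaRa1} in the form: the space of connections on $\Ei$ is a torsor over $\Ai^1(\M)\otimes\Gamma^\infty(\M;\End\Ei)$, and conjugating the idempotent $I^E$ by the block-diagonal matrix $\diag(f_1,\dots,f_n)$ (which preserves the property of being an idempotent defining a bundle isomorphic to $\Ei$, by the ``stably unitarily equivalent'' clause of Lemma~\ref{Fedosovlemma}) produces the new idempotent $I^F$ whose Levi--Civita connection realizes any prescribed gauge shift. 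The cocycle condition for $g^F_{\alpha\beta}$ is automatic from the form $f_\alpha g^E_{\alpha\beta}f_\beta^{-1}$. The only genuinely analytic input is that $\theta_\alpha-\theta^F_\alpha$ is a globally defined $\End\Ei$-valued $1$-form (not just a local one), which holds because both $\theta$ and $\theta^F$ obey the \emph{same} inhomogeneous gluing law $\eta_\alpha\mapsto d\log g^E_{\alpha\beta}+\Ad(g^E_{\beta\alpha})\eta_\beta$, so their difference transforms by the \emph{homogeneous} rule $\Ad(g^E_{\beta\alpha})$ and hence glues to a section of $\Ai^1\otimes\End\Ei$; then one solves $df_\alpha=(\theta_\alpha-\theta^F_\alpha)f_\alpha$ on each contractible $U_\alpha$ by the Frobenius/Poincaré argument and checks the $f_\alpha$ patch up compatibly. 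Finally, since the partition of unity entered $\theta^F$ only through $\theta^F_\alpha=-\sum_\beta\rho_\beta\,d\log g^F_{\alpha\beta}$ and the same $(\rho_\alpha)$ may be reused verbatim for every $\theta$, the last sentence of the lemma follows immediately.
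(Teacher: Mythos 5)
Your reduction has a genuine gap, and it sits exactly at the point where the paper invokes Narasimhan--Ramanan. The equation you propose to solve, $df_\alpha\,f_\alpha^{-1}=\theta_\alpha-\theta^F_\alpha$, is not an ODE with a known right-hand side: $\theta^F_\alpha=\pm\sum_\beta\rho_\beta\,d\log\bigl(f_\alpha g^E_{\alpha\beta}f_\beta^{-1}\bigr)$ depends on the whole unknown family $(f_\beta)$. Expanding $d\log g^F_{\alpha\beta}$ turns your condition into $\theta_\alpha=\Ad(f_\alpha)\theta^E_\alpha+\sum_\beta\rho_\beta\,\Ad(f_\alpha g^E_{\alpha\beta})\bigl(f_\beta^{-1}df_\beta\bigr)$ (up to the sign convention), a nonlinear system that couples all charts through the partition of unity; it cannot be solved chart by chart by a Frobenius/Poincar\'e argument. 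Moreover, even with a prescribed right-hand side $\omega_\alpha$, the equation $df_\alpha\,f_\alpha^{-1}=\omega_\alpha$ is locally solvable only when $\omega_\alpha$ satisfies the Maurer--Cartan integrability condition $d\omega_\alpha=\omega_\alpha\wedge\omega_\alpha$, i.e. when $\omega_\alpha$ is pure gauge; the difference of two connections is not flat in general, so the local solvability claim fails already on a single $U_\alpha$. (Incidentally, the $f_\alpha$ form a $0$-cochain of gauge transformations and need not ``patch up''; no compatibility beyond smoothness on each $U_\alpha$ is required, but that is not where the difficulty lies.)

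The appeal to \cite{NaRa1} in the form you state does not supply the missing content. That the space of connections is a torsor over $\Ai^1(\M)\otimes\Gamma^\infty(\M;\End\Ei)$ is trivial and says nothing about Levi-Civita (Grassmann) connections, and the assertion that conjugating $I^E$ by $\diag(f_1,\dots,f_n)$ yields an idempotent whose Levi-Civita connection ``realizes any prescribed gauge shift'' is precisely the statement of the lemma, not a consequence of the torsor property. The paper's proof uses the genuine content of the Narasimhan--Ramanan universal connection theorem: for the frame bundle $\Pis$ of $\Ei$ and the fixed partition of unity, any connection $\tilde\theta$ can be written $\tilde\theta=\sum_\alpha\rho_\alpha\Psi_\alpha^*\vartheta_\alpha$ for suitable trivializations $\Psi_\alpha$, with $\vartheta_\alpha$ the Maurer--Cartan form; the $\Psi_\alpha$ define the new transition functions $g^F_{\alpha\beta}$, pulling back along the associated frames $s_\alpha$ gives $s_\alpha^*\tilde\theta=\sum_\beta\rho_\beta\,d\log g^F_{\alpha\beta}=\theta^F_\alpha$, and comparison with the original frames produces the gauge term $\zeta_\alpha$. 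That existence statement (with the same partition of unity for every connection) is the irreducible input; to make your argument work you would have to quote it in this form or reprove it, not replace it by local ODEs. Your observation about the sign discrepancy between the lemma and \eqref{explgrassconn} is fair, but it is only bookkeeping.
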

\begin{proof} 
Let $\Pis$ be the principal $\GeL(r,\C)$-bundle of frames for $\Ei$. Let $\vartheta_\alpha$ be the connection on the trivial bundle $U_\alpha\times\GeL(r,\C)$ obtained from the Maurer--Cartan 1-form $dg\, g^{-1}$ on $\GeL(r,\C)$.
Given any connection $\theta$ on $\Ei$, corresponding to a connection $\tilde{\theta}\in\Ai^1(\Pis)\otimes\Gg\Gl(r,\C)$ on $\Pis$, Narasimhan--Ramanan show \cite{NaRa1} that the local 1-form $\tilde{\theta}_\alpha$ on $\Pis|_{U_\alpha}$ is obtained by pulling back $\vartheta_\alpha$ via some trivialization $\Psi_\alpha:\Pis|_{U_\alpha}\to U_\alpha\times G$,
$$
\tilde{\theta}_\alpha=\Psi_\alpha^*\vartheta_\alpha.
$$
The zero section $C^\infty(\M)\hookrightarrow C^\infty(\Pis)$ maps the identity in $C^\infty(\M)$ to the identity on $C^\infty(\Pis)$. Therefore, the patching of the $\tilde{\theta}_\alpha$'s into $\theta_\alpha$ can be done with the same partition $(\rho_\alpha)_{\alpha=1,\dots,n}$ that we used for the cover $\GU$ of the base manifold (more precisely we use the image of $\rho_\alpha$ under the embedding, and denote it by the same symbol),
$$
\tilde{\theta}=\sum_\alpha\rho_\alpha\tilde{\theta}_\alpha=\sum_\alpha\rho_\alpha\Psi_\alpha^*\vartheta_\alpha.
$$
The need of special choice of trivializations $\Psi_\alpha$ requires only a change in isomorphism class of $\Pis$, so the vector bundle $\Fi$ associated with the principal $\GeL(r,\C)$-bundle defined by the $\Psi_\alpha$'s and the standard $\GeL(r,\C)$-representation is isomorphic to the original bundle $\Ei$. Letting $s_\alpha$ be the local frame for $\Fi$ defined by $\Psi_\alpha(s_\alpha(x))=(x,\bone_r)$, we obtain the connection on $\Fi|_{U_\alpha}$ defined by $\tilde{\theta}$ by pulling back va $s_\alpha$,
$$
s_\alpha^*\tilde{\theta}=\sum_\beta\rho_\beta s_\alpha^*\Psi_\beta^*\vartheta_\beta=\sum_\beta\rho_\beta\, d\log g_{\alpha\beta}^F=\theta_\alpha^F.
$$
The given connection $\theta$ is by definition the pullback of $\tilde{\theta}$ by a local frame for $\Pis$. Since $\Ei\cong\Fi$, this gives the relation $\theta_\alpha=\theta_\alpha^F+f^{-1}_\alpha\, df_\alpha$ for any gauge transformation $(f_\alpha)_{\alpha=1,\dots,n}\in\check{C}^0(\GU;\GeL(r,\Ci^\infty_\M))$ defining an isomorphism between $\Ei$ and $\Fi$. 
\end{proof}

We have already mentioned the paper \cite{Weil1} in which Weil constructed the \v{C}ech--de Rham isomorphism. In the same paper he proved what is now sometimes called the ``geometric quantization theorem" (due to its independent proof by Kostant motivated by quantization): every $\Z(1)$-valued closed 2-form $\Theta\in\Ai^2(\M)$ on a smooth manifold $\M$ is the curvature 2-form of some connection on some smooth line bundle over $\M$ \cite[Thm. 2.2.15]{Bryl3}. We will prove a strengthening of this theorem which also works for $2p$-forms for any $p$. 

We define $\Di\Ri$ as before using a fixed partition of unity $(\rho_\alpha)_{\alpha=1,\dots,n}$ subordinate to $\GU$.
\begin{prop}\label{exacttwoformsprop}
Every globally $d$-closed 2-form on $\M$ with $\Z(1)$-valued periods is equal to the Levi-Civita curvature $d\theta^L=\Di\Ri^{1,1}(\xi^{1,L})$ of some smooth line bundle on $\M$. Every globally $d$-exact 2-form on $\M$ is equal to the Levi-Civita curvature of some trivial line bundle $\Li\cong\Ci^\infty_\M$. 
\end{prop}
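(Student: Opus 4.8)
The plan is to derive both assertions from the Narasimhan--Ramanan gauge-fixing result of Lemma~\ref{strongNaRalemma}, exploiting the two simplifications peculiar to line bundles: the local connection forms satisfy $\theta_\alpha\wedge\theta_\alpha=0$, so the curvature of a connection $\theta=(\theta_\alpha)_{\alpha=1,\dots,n}$ is simply $d\theta=\sum_\alpha\rho_\alpha\, d\theta_\alpha$; and a gauge transformation changes $\theta$ by a $d$-closed $1$-form, hence leaves the curvature unchanged. I would also use that, by \eqref{stateforpeq1} (the case $p=1$, $r=1$ of Proposition~\ref{calcofsmoothChern}), the Levi-Civita curvature of a line bundle $\Li$ with transition functions $g^L_{\alpha\beta}$ equals $\Di\Ri^{1,1}(\xi^{1,L})$.

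For the first statement, given a $d$-closed $\Theta\in\Ai^2(\M)$ with periods in $\Z(1)$, I would invoke the Weil--Kostant geometric quantization theorem recalled above \cite{Weil1,Bryl3} to write $\Theta$ as the curvature $2$-form of some connection $\theta=(\theta_\alpha)_{\alpha=1,\dots,n}$ on some smooth line bundle $\Fi$; by the first remark this curvature is $\sum_\alpha\rho_\alpha\, d\theta_\alpha$. Then Lemma~\ref{strongNaRalemma} furnishes a line bundle $\Li\cong\Fi$ whose Levi-Civita connection $\theta^L$ satisfies $\theta_\alpha=\theta^L_\alpha+\zeta_\alpha$ with $\zeta_\alpha=df_\alpha\, f_\alpha^{-1}$ for a nowhere-vanishing scalar function $f_\alpha\in C^\infty(U_\alpha)$. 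Since $d\zeta_\alpha=f_\alpha^{-2}\, df_\alpha\wedge df_\alpha=0$, I get $d\theta_\alpha=d\theta^L_\alpha$ on each $U_\alpha$, whence $\Theta=\sum_\alpha\rho_\alpha\, d\theta^L_\alpha=d\theta^L=\Di\Ri^{1,1}(\xi^{1,L})$.

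For the second statement, writing $\Theta=d\omega$ with $\omega\in\Ai^1(\M)$, I would put on the trivial line bundle $\Ci^\infty_\M$ (transition functions $g_{\alpha\beta}\equiv 1$) the connection $\theta_\alpha:=\omega|_{U_\alpha}$: the patching condition $\theta_\alpha=\theta_\beta$ on $U_{\alpha\beta}$ holds trivially, and the curvature is $\sum_\alpha\rho_\alpha\, d(\omega|_{U_\alpha})=d\omega=\Theta$. Feeding this connection into Lemma~\ref{strongNaRalemma} yields a line bundle $\Li\cong\Ci^\infty_\M$ --- hence again trivial --- whose Levi-Civita connection $\theta^L$ differs from $\theta$ by $\zeta_\alpha=df_\alpha\, f_\alpha^{-1}$; exactly as before $d\zeta_\alpha=0$, so $\Theta=\sum_\alpha\rho_\alpha\, d\theta^L_\alpha=d\theta^L$ is the Levi-Civita curvature of the trivial line bundle $\Li$.

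I expect no serious obstacle once Lemma~\ref{strongNaRalemma} is available; the one point that will need care is the triviality requirement in the second statement. Routing an exact $2$-form through the geometric quantization theorem would only guarantee a line bundle with vanishing \emph{real} first Chern class, which could a priori carry torsion, so instead I feed the explicit pair $(\Ci^\infty_\M,\omega)$ into Lemma~\ref{strongNaRalemma}, which by construction returns a bundle \emph{isomorphic} to its input. The residual checks --- that the $f_\alpha$ produced are scalar and nowhere vanishing (immediate, since the transition and gauge functions of a line bundle take values in $\GeL(1,\C)$) and that $d\log f_\alpha$ is $d$-closed --- are routine.
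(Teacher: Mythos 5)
Your proof is correct, and its engine is the same as the paper's: reduce everything to Lemma \ref{strongNaRalemma}, note that for line bundles the gauge terms $\zeta_\alpha=d\log f_\alpha$ are scalar and hence $d$-closed, so gauge-fixing to the Levi-Civita connection does not change the local curvature forms, and then identify $d\theta^L$ with $\Di\Ri^{1,1}(\xi^{1,L})$ via the $p=1$ case of Proposition \ref{calcofsmoothChern}. Where you diverge is in how the input to that lemma is produced. For the first claim the paper does not quote the Weil--Kostant quantization theorem as a black box (even though it is stated just above the proposition); it rebuilds the pair (line bundle, connection) inside its \v{C}ech--de Rham machinery, taking local potentials $\theta_\alpha$ of $\Theta$, forming the cocycle $\xi_{\alpha\beta}=\theta_\beta-\theta_\alpha$, using identity \eqref{Thetaversusxi}, and observing that $\Z(1)$-valued cocycles in $\check{Z}^1(\GU;\Zi^1)$ are of the form $d\log g_{\alpha\beta}$ --- this keeps the proof self-contained and consistent with the claim that the proposition strengthens, rather than uses, the quantization theorem; your citation is logically unobjectionable (the quantization theorem is an established external result and the strengthening concerns only which connection realizes $\Theta$), it just trades self-containment for brevity. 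For the second claim your route is actually cleaner than the paper's: you feed the explicit pair $(\Ci^\infty_\M,\omega)$ into Lemma \ref{strongNaRalemma}, which returns a bundle isomorphic to its input and hence trivial by construction, whereas the paper argues via a global potential $\theta'$ with $\delta\theta'=0$ and the relation $\theta'=-\theta^L+\zeta^L$ to conclude triviality, a step that is noticeably murkier. One small point worth making explicit in your write-up: to apply Lemma \ref{strongNaRalemma} (and to have $\Di\Ri^{1,1}$ with respect to the fixed partition of unity), the line bundle furnished by the quantization theorem must be described by transition functions over the fixed covering $\GU$; this is automatic here because the standing assumption of the subsection is that all finite intersections of $\GU$ are contractible, but it should be said.
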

\begin{proof}
Suppose that $\Theta\in\Ai^2(\M)$ is a $d$-closed 2-form. Then we can construct a cocycle $\xi\in\check{Z}^1(\GU;\Zi^1)$ such that $[\Di\Ri^{1,1}(\eta)]=[\Theta]$ in $H^2_{\rm DR}(\M;\C)$. Indeed (cf. \cite[\S IV.6.5]{Dema1}), by the Poincaré lemma there are $1$-forms $\theta_\alpha\in\Ai^1(U_\alpha)$ such that $\Theta|_{U_\alpha}=d\theta_\alpha$, and we define $\xi_{\alpha\beta}:=\theta_\beta-\theta_\alpha$. 
For $q=0$, Equation \eqref{dofDReq} for the 0-cochain $(\theta_\alpha)_{\alpha=1,\dots,n}$ modifies to
$$
d\Di\Ri^{0,1}(\theta)=\Di\Ri^{1,1}(\delta\theta)+\Di\Ri^{0,1}(d\theta),
$$
i.e.
\begin{equation}\label{Thetaversusxi}
d\Di\Ri^{0,1}(\theta)=\Di\Ri^{1,1}(\xi)+\Theta. 
\end{equation}
Every $\Z(1)$-valued 1-cocycle $\xi\in\check{Z}^1(\GU;\Zi^1)$ is of the form $\xi_{\alpha\beta}=d\log g_{\alpha\beta}$ for some $\GeL(1,\Ci^\infty_\M)$-valued 1-cocycle $(g_{\alpha\beta})_{\alpha,\beta=1,\dots,n}$, because the Poincaré lemma applies to each intersection $U_{\alpha\beta}$. Since the $\Z(1)$-valued cocycles generate $\check{Z}^1(\GU;\Zi^1)$ over $\C$, every element of $\check{Z}^1(\GU;\Zi^1)$ is a $\C$-linear combination of 1-cocycles like $d\log g_{\alpha\beta}$.

Assume now that $\Theta$ has $\Z(1)$-valued periods, so that the corresponding cocycles are $\Z(1)$-valued, and fix such a $\xi\in\check{Z}^0(\GU;\Zi^1)$. Thus $\xi=\xi^{L,1}$ is the smooth Atiyah 1-cocycle of some smooth line bundle $\Li$ on $\M$, and since the corresponding potential $\theta$ transform as a connection on $\Li$, its exterior derivative $\Theta$ is the curvature of a connection on $\Li$. In fact, $\Theta$ is the curvature of a connection on \emph{every} line bundle in the smooth isomorphism class of $\Li$, since we can add any $d$-exact $0$-cocycle $\zeta_\alpha=d\log f_\alpha$ to $\theta_\alpha$ without changing $\Theta$ and $\theta+\zeta$ is a connection on the line bundle defined by the transition functions $f_\alpha g_{\alpha\beta}f_\beta^{-1}$. After fixing a potential $\theta$ for $\Theta$, the connection on each line bundle $\Li$ with curvature $\Theta$ is unique, given by $\theta+\zeta^L$.

By Lemma \ref{strongNaRalemma}, the potential $\theta$ is gauge equivalent to the Levi-Civita connection on some line bundle in the same isomorphism class. So $\theta_\alpha+\zeta^L_\alpha=\theta^L_\alpha$ for some $\Li$ and $d\theta^L_\alpha=d\theta_\alpha=\Theta|_{U_\alpha}$ as desired.

If $\Theta\in d\Ai^1(\M)$ is a globally exact then we can choose a potential $\theta'$ with $\delta\theta'=0$. Since any two potentials differ by some $\zeta^L$, we have $\theta'=-\theta^L+\zeta^L$ for some smooth line bundle $\Li$. So $\theta^L$ differs by a gauge transformation from a global 1-form, implying that $\Li$ is trivial.

\end{proof}
The following remark is not essential but merely gives some extra observations related to the above proposition.
\begin{Remark}
Let $h:\check{C}^q(\GU;\Ai^p)\to\check{C}^{q-1}(\GU;\Ai^p)$ be the \v{C}ech homotopy operator defined by the partition of unity, i.e. 
$$
(h\eta)_{\alpha_1\cdots\alpha_p}:=\sum_{\alpha_0}\rho_{\alpha_0}\eta_{\alpha_0\cdots\alpha_p}.
$$
The equation $\id=h\delta+\delta h$ gives a decomposition
$$
\check{C}^1(\GU;\Ai^1)=\delta h\check{C}^1(\GU;\Ai^1)\oplus h\delta\check{C}^1(\GU;\Ai^1)=\check{Z}^1(\GU;\Ai^1)\oplus h\delta\check{C}^1(\GU;\Ai^1).
$$ 
Observe that the subspace $\check{C}^1(\GU;\Zi^1)$ can be described as
$$
\check{C}^1(\GU;\Zi^1)=\{\zeta=\theta'+\theta\in\check{Z}^1(\GU;\Ai^1)\oplus h\delta\check{C}^1(\GU;\Ai^1)|\ d\theta'=-d\theta\}.
$$ 
What Lemma \ref{strongNaRalemma} says is that we have an isomorphism
$$
\delta h:\check{C}^1(\GU;\Zi^1)/\check{Z}^1(\GU;\Zi^1)\to \check{Z}^1(\GU;\Ai^1)/\check{Z}^1(\GU;\Zi^1),\qquad \zeta\to\theta':=\delta h\zeta.
$$
In other words, for any $d$-exact 2-form $\Theta\in d\Ai^1(\M)$ we can always find local potentials $\theta_\alpha$ such that $h\theta=0$. 

We see that the Levi-Civita connections $\theta=h\delta\zeta$ of flat bundles, and their global versions $\theta'=\delta h\zeta$, are very natural objects: they are the components of $d$-closed cochains $\zeta=d\log f\in\check{C}^0(\GU;\Zi^1)$ in the decomposition $\check{C}^0(\GU;\Ai^1)=\Ran(\delta^{(-1)})\oplus\Ran(\delta^{(0)})$.  

A trival line bundle $\Li\cong\Ci^\infty_\M$ is flat (i.e. has constant transition functions $g_{\alpha\beta}=f_\alpha f_\beta^{-1}$) if and only if $\delta\zeta^L=0$, where $\zeta^L_\alpha=d\log f_\alpha$ as before. 
\end{Remark}

Since every $d$-closed form on $U_\alpha$ is $d$-exact, hence logarithmically $d$-exact, every element $\zeta$ of $\check{C}^0(\GU;\Zi^1)$ is of the form $\zeta_\alpha=d\log f_\alpha$ for some invertible functions $f_\alpha\in C^\infty(U_\alpha)$. So every coboundary $\delta\zeta\in\check{Z}^1(\GU;\Zi^1)$ is of the form $(\delta\zeta)_{\alpha\beta}=d\log f_\alpha-d\log f_\beta$, and hence every $d$-exact 2-form can be expressed as
\begin{align*}
\Di\Ri^{1,1}(\delta\zeta)&=\sum_{\alpha,\beta}\rho_\alpha(\delta\zeta)_{\alpha\beta}\wedge d\rho_\beta=\sum_{\alpha,\beta}\rho_\alpha\, d\log(f_\alpha f_\beta^{-1})\wedge d\rho_\beta
\\&=\sum_{\alpha,\beta}\rho_\alpha\, d\log g_{\alpha\beta}\wedge d\rho_\beta
\end{align*}
if we define $g_{\alpha\beta}:=f_\alpha f_\beta^{-1}$. 


\begin{Lemma}[{\cite[Thm. 4.1]{Datta1}, \cite[Prop. 1]{PiTa1}}]\label{Dattalemma}
Every exact differential $2p$-form on $\M$ can be expressed as
$$
\ch_p(\theta)=\sum_{j=1}^r(d\theta^{j})^{\wedge p}\in d\Ai^{2p-1}(\M)
$$
for some connections $\theta^j$ on the trivial smooth line bundle $\Ci^\infty_\M$ over $\M$, where
$$
\theta:=\theta^1\oplus\cdots\oplus\theta^r=
\begin{pmatrix}
\theta^1&0&\cdots&0\\
0&\theta^2&\cdots&0\\
\vdots&0&\ddots&\vdots\\
0&\cdots&0&\theta^r
\end{pmatrix}\in\Ai^1(\M)\otimes\Mn_r(\C)
$$
is the direct-sum connection on $(\Ci^\infty_\M)^{\oplus r}$. 
\end{Lemma}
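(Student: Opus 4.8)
The plan is to prove this directly and elementarily; it is the degree-$2p$ analogue of Proposition~\ref{exacttwoformsprop}, whose case $p=1$ amounts to the trivial observation that an exact $2$-form is $d$ of a global $1$-form, i.e. of a connection on $\Ci^\infty_\M$. The strategy is to reduce the statement to a single algebraic identity and then do bookkeeping.

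First I would put a primitive into a convenient shape. Write the given globally exact form as $\Omega=d\beta$ with $\beta\in\Ai^{2p-1}(\M)$. Using a finite covering of $\M$ by coordinate charts (available under the standing finiteness assumption on $\M$), a subordinate partition of unity $(\rho_\alpha)$, and cutoffs $\chi_\alpha$ equal to $1$ on $\supp\rho_\alpha$ and supported in the chart, one expands $\rho_\alpha\beta$ in local coordinates and replaces each coordinate function $x^\alpha_j$ by the global function $\chi_\alpha x^\alpha_j$ (this does not change the form, since $d(\chi_\alpha x^\alpha_j)=dx^\alpha_j$ on $\supp\rho_\alpha$). Summing over $\alpha$ this writes $\beta$ as a finite sum
$$
\beta=\sum_{i=1}^N f_i\, dh_{i,1}\wedge\cdots\wedge dh_{i,2p-1},\qquad f_i,h_{i,j}\in C^\infty(\M),
$$
hence
$$
\Omega=d\beta=\sum_{i=1}^N df_i\wedge dh_{i,1}\wedge\cdots\wedge dh_{i,2p-1},
$$
a finite sum of wedge products of $2p$ exact $1$-forms.

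The key point is then the identity, valid for any smooth functions $a_1,\dots,a_{2p}$ on $\M$,
$$
da_1\wedge da_2\wedge\cdots\wedge da_{2p}=\frac{1}{p!}\,(d\mu)^{\wedge p},\qquad \mu:=\sum_{k=1}^{p}a_{2k-1}\,da_{2k},
$$
which holds because $d\mu=\sum_{k=1}^{p}da_{2k-1}\wedge da_{2k}$ is a sum of $p$ mutually commuting $2$-forms each of which squares to zero, so in the expansion of $(d\mu)^{\wedge p}$ only the term using each summand exactly once survives, and it occurs with multiplicity $p!$. Applying this to every summand of $\Omega$ (with $(a_1,\dots,a_{2p})=(f_i,h_{i,1},\dots,h_{i,2p-1})$) gives
$$
\Omega=\sum_{i=1}^{N}(d\theta^i)^{\wedge p},\qquad \theta^i:=(p!)^{-1/p}\Big(f_i\,dh_{i,1}+\sum_{k=2}^{p}h_{i,2k-2}\,dh_{i,2k-1}\Big)\in\Ai^1(\M),
$$
where the scalar $(p!)^{-1/p}$ has been absorbed into $\theta^i$ (harmless, since $\Ai^1$ consists of $\C$-valued forms and rescaling a $1$-form leaves the trivial line bundle unchanged). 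Each $\theta^i$ is a connection on $\Ci^\infty_\M$, and for the direct-sum connection $\theta=\theta^1\oplus\cdots\oplus\theta^r$ with $r=N$ on $(\Ci^\infty_\M)^{\oplus r}$ one has curvature $\Theta=d\theta=\diag(d\theta^1,\dots,d\theta^r)$ (the off-diagonal entries and the quadratic term $\theta\wedge\theta$ vanish), so $\tr(\Theta^{\wedge p})=\sum_i(d\theta^i)^{\wedge p}=\Omega$; this is the asserted $\ch_p(\theta)$, and it lies in $d\Ai^{2p-1}(\M)$ consistently with its being a Chern character form of a trivial bundle.

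I expect the only mild obstacle to be the first reduction — expressing an arbitrary $(2p-1)$-form with finitely many terms $f_i\,dh_{i,1}\wedge\cdots\wedge dh_{i,2p-1}$ — which is where finiteness of the covering of $\M$ is genuinely used and where one must take a little care globalizing the coordinate functions via the cutoffs $\chi_\alpha$. Everything after that is the one-line identity above together with bookkeeping.
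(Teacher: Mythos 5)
Your argument is correct, and it is worth noting that the paper does not prove this lemma at all: it is imported verbatim from \cite[Thm. 4.1]{Datta1} and \cite[Prop. 1]{PiTa1}, so what you have produced is a self-contained elementary proof rather than a reconstruction of an in-paper argument. Your two steps are sound: (a) writing a primitive $\beta$ as a finite sum $\sum_i f_i\,dh_{i,1}\wedge\cdots\wedge dh_{i,2p-1}$ of globally defined functions, and (b) the algebraic identity $(d\mu)^{\wedge p}=p!\,da_1\wedge\cdots\wedge da_{2p}$ for $\mu=\sum_{k=1}^p a_{2k-1}\,da_{2k}$, which holds exactly as you say because the $p$ commuting $2$-forms $da_{2k-1}\wedge da_{2k}$ each square to zero, so only the $p!$ permutation terms survive; absorbing $(p!)^{-1/p}$ into the (complex-valued) $1$-forms is harmless. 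The cutoff trick in (a) is also fine as stated, since $\{\rho_\alpha\neq 0\}$ is open and contained in $\{\chi_\alpha=1\}$, so $d(\chi_\alpha x_j^\alpha)=dx_j^\alpha$ wherever the coefficient $\rho_\alpha\beta_I$ is nonzero. The only point you gloss over is the existence of a \emph{finite} covering by coordinate charts: the paper's standing assumption is a finite covering with contractible intersections, whose members need not be chart domains. This is immaterial for compact $\M$ (the case relevant to the main theorems), and in general one can either invoke the standard fact that every smooth $m$-manifold admits a finite atlas whose chart domains are disjoint unions of balls, or replace the chart argument by a Whitney embedding $\M\subset\R^K$, using the restricted coordinate functions $y_1,\dots,y_K$ and a splitting of the surjection from the trivial bundle $\Lambda^{2p-1}(\R^K)^*\times\M$ onto $\Lambda^{2p-1}T^*\M$ to write $\beta=\sum_I f_I\,dy_I$ globally; this latter route is essentially how the cited references proceed. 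With that small repair your proof is complete and is in spirit the same decomposition-plus-power-identity argument as \cite[Prop. 1]{PiTa1}, but it has the merit of making the lemma independent of external input, including the explicit same-partition-of-unity bookkeeping the paper needs later.
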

Observe that the connections in the preceeding lemma are arbitrary globally defined 1-forms $\theta^j\in\Ai^1(\M)$. We will work with line bundles which are merely \emph{isomorphic} to the trivial line bundle $\Ci^\infty_\M$; a connection $\theta$ on such a line bundle is not a globally defined 1-form on $\M$ but its exterior derivative $d\theta$ coincides with the exterior derivative of some global 1-form.

\begin{thm}\label{exactcoboundthm}
A global $2p$-form on $\M$ is $d$-exact if and only if it equals $\Di\Ri^{p,p}(\xi^{E,p})$ for a smooth vector bundle $\Ei$ in the isomorphism class of the trivial bundle $(\Ci^\infty_\M)^{\oplus r}$ (of some rank $r\geq 1$). 
\end{thm}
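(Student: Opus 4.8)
The plan is to establish the two implications separately, in each case reducing to results already proved; no essentially new argument is needed. Fix $p\ge 1$ and the partition of unity $(\rho_\alpha)_{\alpha=1,\dots,n}$ subordinate to $\GU$, which by hypothesis has contractible finite intersections.

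\textbf{If the form equals $\Di\Ri^{p,p}(\xi^{E,p})$ for $\Ei$ trivial, it is exact.} Suppose $\omega=\Di\Ri^{p,p}(\xi^{E,p})$ with $\Ei\cong(\Ci^\infty_\M)^{\oplus r}$. Since $\xi^{E,p}$ is a $d$-closed $\delta$-cocycle with values in $\Zi^p$ (Definition \ref{smoothAtdef}), $\omega$ is a well-defined $2p$-form, closed by Lemma \ref{DRlemma}(ii). By Proposition \ref{calcofsmoothChern} it equals $\tr\big((d\theta^E)^{\wedge p}\big)$, so by Theorem \ref{Cherrepthm} its de Rham class is $p!(-2\pi i)^p\ch_p(\Ei)$; but $\ch_*\big((\Ci^\infty_\M)^{\oplus r}\big)=r\cdot 1$ sits in degree $0$, so $\ch_p(\Ei)=0$ for $p\ge 1$ and $\omega$ is exact. (Alternatively, and more cheaply: Lemma \ref{atiyahisotrans} makes $\xi^{E,p}$ cohomologous in $\check{H}^p(\GU;\Zi^p)$ to the Atiyah $p$-cocycle of $(\Ci^\infty_\M)^{\oplus r}$ with constant transition functions $\bone_r$, which is the zero cocycle; writing $\xi^{E,p}=\delta\eta$ with $\eta\in\check{C}^{p-1}(\GU;\Zi^p)$ and invoking \eqref{dofDReq} with $d\eta=0$ gives $\omega=(-1)^p\,d\,\Di\Ri^{p,p-1}(\eta)$.)

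\textbf{If the form is exact, it equals $\Di\Ri^{p,p}(\xi^{E,p})$ for some trivial $\Ei$.} Given $\omega\in d\Ai^{2p-1}(\M)$, the first step is Lemma \ref{Dattalemma}: write $\omega=\sum_{j=1}^r(d\theta^j)^{\wedge p}$ for global $1$-forms $\theta^j\in\Ai^1(\M)$. The second step is Proposition \ref{exacttwoformsprop}, applied to each globally exact $2$-form $d\theta^j$: it furnishes a trivial line bundle $\Li_j\cong\Ci^\infty_\M$ whose Levi-Civita curvature \emph{equals} $d\theta^j$ as a form, i.e.\ $\Di\Ri^{1,1}(\xi^{L_j,1})=d\theta^j$. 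The third step is to put $\Fi:=\bigoplus_{j=1}^r\Li_j$ — which lies in the isomorphism class of $(\Ci^\infty_\M)^{\oplus r}$ — and equip it with the block-diagonal transition functions $g^F_{\alpha\beta}=\diag(g^{L_1}_{\alpha\beta},\dots,g^{L_r}_{\alpha\beta})$ relative to $\GU$; then $d\log g^F_{\alpha\beta}$ is diagonal and, by multiplicativity of $\wedge$ and additivity of the trace, $\xi^{F,p}=\sum_{j=1}^r\xi^{L_j,p}$. Finally, since $\xi^{L_j,p}=(\xi^{L_j,1})^{\cup p}$ and $\Di\Ri$ takes cup products of cocycles to wedge products (Lemma \ref{DRlemma}(i)),
$$
\Di\Ri^{p,p}(\xi^{F,p})=\sum_{j=1}^r\big(\Di\Ri^{1,1}(\xi^{L_j,1})\big)^{\wedge p}=\sum_{j=1}^r(d\theta^j)^{\wedge p}=\omega .
$$

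\textbf{Main obstacle.} There is no deep difficulty: the statement is a synthesis of Lemma \ref{Dattalemma}, Proposition \ref{exacttwoformsprop}, Proposition \ref{calcofsmoothChern} and Lemma \ref{DRlemma}. The delicate point is bookkeeping — every equality in the second part must hold \emph{at the level of differential forms}, not merely in cohomology. This is precisely why it is crucial that Proposition \ref{exacttwoformsprop} outputs a line bundle whose Levi-Civita curvature is \emph{equal} (not just cohomologous) to the prescribed exact $2$-form, and that Lemma \ref{Dattalemma} is an honest identity of forms. The only genuine computation, which I would dispatch in a line, is the identity $\xi^{F,p}=\sum_j\xi^{L_j,p}$ for the block-diagonal direct sum.
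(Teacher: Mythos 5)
Your proof is correct and takes essentially the same route as the paper: the substantive direction (exact $\Rightarrow$ of the form $\Di\Ri^{p,p}(\xi^{E,p})$) is exactly the paper's combination of Lemma \ref{Dattalemma} and Proposition \ref{exacttwoformsprop}, followed by forming the direct sum $\bigoplus_j\Li_j$ with diagonal transition functions and using that $\Di\Ri$ sends cup products to wedge products. Your explicit verification of the converse direction (via Theorem \ref{Cherrepthm}, or the coboundary argument) merely fills in a step the paper leaves implicit.
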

\begin{proof}
Combining Lemma \ref{Dattalemma} with Proposition \ref{exacttwoformsprop} we see that every global $d$-exact 2-form on $\M$ can be expressed as
\begin{align*}
\ch_p(\theta^E)&=\sum_{j=1}^r(d\theta^{L_j})^{\wedge p}=\sum_{j=1}^r\Di\Ri^{1,1}(\xi^{L_j,1})^{\wedge p}
\\&=\Di\Ri^{p,p}(\xi^{E,p})
\end{align*}
where $\theta^E=\theta^{L_1}\oplus\cdots\oplus\theta^{L_r}$ is the Levi-Civita connection on some trivial bundle $\Ei=\Li_1\oplus\cdots\oplus\Li_r$.
\end{proof}


\begin{cor}\label{exactcoboundcor}
Every closed $2p$-form on $\M$ with $\Z(p)$-periods can be expressed as $\Di\Ri^{p,p}(\xi^{E,p})$ for some smooth vector bundle $\Ei$.
\end{cor}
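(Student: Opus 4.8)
The plan is to derive the corollary from Theorem~\ref{chernisopropan} and Theorem~\ref{exactcoboundthm} by a short bookkeeping argument with direct sums of bundles. Let $\omega$ be a closed $2p$-form on $\M$ with periods in $\Z(p)=(2\pi i)^p\Z$, so that $(-2\pi i)^{-p}\omega$ has integral periods. First I would move to the \v{C}ech side using Lemma~\ref{DRisiso}: since $\Di\Ri^{p,p}$ induces an isomorphism $\check H^p(\GU;\Zi^p)\to H^{2p}_{\rm DR}(\M;\C)$, there exist $\eta\in\check Z^p(\GU;\Zi^p)$ and a $(2p-1)$-form $\mu$ with $\omega=\Di\Ri^{p,p}(\eta)+d\mu$. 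As $\Di\Ri^{p,p}(\eta)$ is cohomologous to $\omega$, the form $(-2\pi i)^{-p}\Di\Ri^{p,p}(\eta)$ still has integral periods, so the remark following Theorem~\ref{chernisopropan} produces smooth vector bundles $\Ei$ and $\Fi$ with $[\eta]=[\xi^{E,p}]-[\xi^{F,p}]$ in $\check H^p(\GU;\Zi^p)$. Any coboundary $\delta\sigma$ with $\sigma\in\check C^{p-1}(\GU;\Zi^p)$ is sent by $\Di\Ri^{p,p}$ to the exact form $(-1)^p d\,\Di\Ri^{p,p-1}(\sigma)$ (apply \eqref{dofDReq} and use $d\sigma=0$), so, absorbing it into $d\mu$, I would reach $\omega=\Di\Ri^{p,p}(\xi^{E,p})-\Di\Ri^{p,p}(\xi^{F,p})+d\mu$.

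The only real point is to remove the minus sign, i.e. to replace the formal difference by a single bundle. Here I would use that $\M$ is compact, so that $\Fi$ is a direct summand of a trivial bundle: choose a smooth vector bundle $\Fi'$ with $\Fi\oplus\Fi'\cong(\Ci^\infty_\M)^{\oplus N}$. Since the transition functions of a direct sum are block-diagonal, the smooth Atiyah cocycles are additive under $\oplus$, so $\Di\Ri^{p,p}(\xi^{F,p})+\Di\Ri^{p,p}(\xi^{F',p})=\Di\Ri^{p,p}(\xi^{F\oplus F',p})$; and the right-hand side is a closed $2p$-form representing $p!(-2\pi i)^p\ch_p(\Fi\oplus\Fi')=0$ in de Rham cohomology (Theorem~\ref{Cherrepthm}, a trivial bundle having vanishing Chern character in positive degree), hence it is exact, say equal to $\nu$. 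Thus $-\Di\Ri^{p,p}(\xi^{F,p})=\Di\Ri^{p,p}(\xi^{F',p})-\nu$, and collecting terms gives $\omega=\Di\Ri^{p,p}(\xi^{E\oplus F',p})+\beta$ with $\beta:=d\mu-\nu$ a globally $d$-exact $2p$-form.

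To finish I would apply Theorem~\ref{exactcoboundthm} to write $\beta=\Di\Ri^{p,p}(\xi^{T,p})$ for a trivial bundle $\Ti$, and add it into the direct sum once more, obtaining $\omega=\Di\Ri^{p,p}(\xi^{\Gi,p})$ with $\Gi:=\Ei\oplus\Fi'\oplus\Ti$. I expect the sign issue of the second paragraph to be the only genuine obstacle: Theorem~\ref{chernisopropan} controls $[\eta]$ only as an element of the \emph{group} $\check H^p(\GU;\Zi^p)$, so a priori $\omega$ is a difference of two forms $\Di\Ri^{p,p}(\xi^{E,p})$ rather than a single one, and one must know that $-\Di\Ri^{p,p}(\xi^{F,p})$ is again realised, modulo exact forms, by an honest bundle; the complementary-bundle trick (valid because $\M$ is compact) is exactly what supplies this, after which everything reduces to additivity of Atiyah cocycles under $\oplus$ and to Theorem~\ref{exactcoboundthm}. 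For odd $p$ one can shortcut the second paragraph, since then $\xi^{F^*,p}=-\xi^{F,p}$ and $\Fi^*$ already does the job of $\Fi'$, as noted after Theorem~\ref{chernisopropan}.
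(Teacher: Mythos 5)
Your proposal is correct and follows essentially the same route as the paper: reduce, via Lemma \ref{DRisiso} and Theorem \ref{chernisopropan} (with the integrality remark after it), to realising the class by Atiyah cocycles, and then absorb the remaining globally exact discrepancy using Theorem \ref{exactcoboundthm} together with additivity of Atiyah cocycles under direct sums. Your complementary-bundle step is a useful fleshing-out of the one detail the paper leaves implicit, namely how to pass from the virtual difference $\xi^{E,p}-\xi^{F,p}$ to a single bundle when $p$ is even (the paper's dual-bundle trick only covers odd $p$).
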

\begin{proof}
This follows from Theorem \ref{exactcoboundthm} and the fact that the Chern character is an isomorphism. 
\end{proof}




\section{$(p,p)$-classes}

\subsection{Atiyah $(p,0)$-cocycles}\label{analytAtiyahsec}
Let $\M$ be a compact complex-analytic manifold. Suppose that $\Ei$ is a rank-$r$ holomorphic vector bundle on $\M$ defined by holomorphic transition functions $g_{\alpha\beta}:U_{\alpha\beta}\to\GeL(r,\C)$ with respect to some $\Ei$-trivializing open covering $\GU=(U_\alpha)_{\alpha=1,\dots,n}$ of $\M$. We know from \cite[Prop. 12]{Atiy1} (see also \cite{ABST1, Kapr3, KRR1}) that the obstruction in $\check{H}^1(\GU;\Omega^1\otimes\End\Ei)$ to the existence of a global holomorphic connection on $\Ei$ is represented by the 1-cocycle $\boldsymbol\xi^{1,0}$, with values in $\Omega^1\otimes_{\Oi_\M}\End\Ei$, defined by
$$
\boldsymbol\xi^{1,0}_{\alpha\beta}
:=\pd\log g_{\alpha\beta},
$$
where $\pd\log g_{\alpha\beta}:=\pd g_{\alpha\beta}\, g^{-1}_{\alpha\beta}$. Note that $\bar{\pd}g_{\alpha\beta}=0$ since $\Ei$ is holomorphic, so that $\boldsymbol\xi^{1,0}=\boldsymbol\xi^1$ is exactly the smooth Atiyah $p$-cocycle of $\Ei$ in the sense of Definition \ref{smoothAtdef}. 
More generally, the smooth Atiyah $p$-cocycle for any $p\geq 1$ simplifies, for a holomorphic vector bundle $\Ei$, to
\begin{align*}
\boldsymbol\xi_{\alpha_0\cdots\alpha_p}^p=\boldsymbol\xi^{p,0}_{\alpha_0\cdots\alpha_p}
:=(-1)^{(p-1)!}\pd\log g_{\alpha_0\alpha_1}\wedge\cdots\wedge\pd\log g_{\alpha_{p-1}\alpha_p}.
\end{align*}
Now if $\Ei$ is any smooth vector bundle on $\M$ with smooth transition functions $g_{\alpha\beta}$, we can still define the cocycles
\begin{equation}\label{Atiyahpcocycle}
\boldsymbol\xi^{p,0}\in\check{Z}^p(\GU;\Ai^{p,0}\otimes_{\Ci^\infty_\M}\End\Ei),\qquad \boldsymbol\xi^{p,0}_{\alpha_0\cdots\alpha_p}:=(-1)^{(p-1)!}\pd\log g_{\alpha_0\alpha_1}\wedge\cdots\wedge\pd\log g_{\alpha_{p-1}\alpha_p},
\end{equation}
\begin{equation}\label{tracedAtiyahpcocycle}
\xi^{p,0}\in\check{Z}^p(\GU;\Ai^{p,0}),\qquad \xi^{p,0}_{\alpha_0\cdots\alpha_p}:=\tr_{\C^r}(\boldsymbol\xi^{p,0}_{\alpha_0\cdots\alpha_p}),
\end{equation}
In contrast to the standard setting where the $g_{\alpha\beta}$'s are holomorphic, it rarely happens that $\bar{\pd}\xi^{p,0}=0$
(let alone $\bar{\pd}\boldsymbol\xi^{p,0}=0$). We refer to $\xi^{p,0}$ or $\boldsymbol\xi^{p,0}$ as the \textbf{Atiyah} $(p,0)$-\textbf{cocycle} of $\Ei$ (with respect to the covering $\GU$). Note that $\boldsymbol\xi^{p,0}$ is the $p$-fold cup product of $\boldsymbol\xi^{1,0}$ with itself,
$$
\boldsymbol\xi^{p,0}=(\boldsymbol\xi^{1,0})^{\cup p},
$$
if we use the cup product defined in Equation \eqref{Endcupprod}. 


\subsection{Harvey's Dolbeault isomorphism}\label{Harveysec}
\begin{Lemma}[{\cite{Harv1}, \cite[Thm. 5.2.12]{Tark1}, \cite[\S1]{ToTo1}}]\label{HarvDolbisolemma}
Let $\M$ be a compact complex-analytic manifold and let $\GU=(U_\alpha)_{\alpha=1,\dots,n}$ be a Stein covering of $\M$ (see e.g. \cite[\S 1.4.4]{GrRe1}). Then for any choice of partition of unity $(\rho_\alpha)_{\alpha=1,\dots,n}$ subordinate to $\GU$, the maps
$$
\Di^{p,q}:\check{C}^q(\GU;\Ai^{p,0})\to\Ai^{p,q}(\M)
$$ 
defined by
$$
\Di^{p,q}(\eta):=\sum_{\alpha_0,\dots,\alpha_q}\rho_{\alpha_0}\eta_{\alpha_0\cdots\alpha_q}\wedge \bar{\pd}\rho_{\alpha_1}\wedge\cdots\wedge\bar{\pd}\rho_{\alpha_q}
$$
induce the Dolbeault isomorphisms 
$$
\check{H}^q(\GU;\Omega^p)\cong H^{p,q}_{\bar{\pd}}(\M).
$$
\end{Lemma}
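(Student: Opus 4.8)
The plan is to run, on the Dolbeault complex, the same double-complex (``tic-tac-toe'') argument that yields Weil's \v{C}ech--de Rham isomorphism in Lemma~\ref{DRisiso}. Fix $p$ and consider the double complex $C^{q,s}:=\check{C}^q(\GU;\Ai^{p,s})$ with the \v{C}ech differential $\delta$ in the $q$-direction and (up to sign) the Dolbeault differential $\bar\pd\colon C^{q,s}\to C^{q,s+1}$ in the $s$-direction. First I would extend the formula for $\Di^{p,q}$ verbatim to $\Ai^{p,s}$-valued cochains, giving maps $\check{C}^q(\GU;\Ai^{p,s})\to\Ai^{p,s+q}(\M)$, so that the exact analogue of \eqref{dofDReq},
$$
\bar\pd\,\Di^{p,q}(\eta)=(-1)^{q+1}\Di^{p,q+1}(\delta\eta)+(-1)^{q}\Di^{p,q}(\bar\pd\eta),
$$
makes sense; its proof is the same computation as for Lemma~\ref{DRlemma}(ii), using $\bar\pd(\bar\pd\rho_{\alpha_i})=0$, $\sum_\alpha\bar\pd\rho_\alpha=0$, and that $\bar\pd$ is an antiderivation. (The analogue of Lemma~\ref{DRlemma}(i) — that $\Di^{p,\bullet}$ sends cup products of cocycles to wedge products of forms — follows by the same manipulation, and will be needed when one later computes $\Di^{p,p}(\xi^{p,0})$.) From the displayed identity it is immediate that $\Di^{p,q}$ maps $\check{Z}^q(\GU;\Omega^p)$ (i.e.\ $\delta$-cocycles whose components are $\bar\pd$-closed, hence holomorphic $(p,0)$-forms) into $\bar\pd$-closed $(p,q)$-forms, and sends $\delta$-coboundaries $\delta\zeta$ with $\zeta\in\check{C}^{q-1}(\GU;\Omega^p)$ to the $\bar\pd$-exact form $\pm\,\bar\pd\,\Di^{p,q-1}(\zeta)$; hence it descends to a homomorphism $\check{H}^q(\GU;\Omega^p)\to H^{p,q}_{\bar\pd}(\M)$.

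To see that this homomorphism is an isomorphism I would proceed exactly as in the proof of Lemma~\ref{DRisiso}, i.e.\ via \cite[Prop.~9.5]{BoTu1} applied to $C^{\bullet,\bullet}$, substituting two facts for the ones used in the de Rham case. (i) Each $\Ai^{p,s}$ is a fine sheaf, so $\check{H}^{j}(\GU;\Ai^{p,s})=0$ for $j\geq 1$; since $0\to\Omega^p\to\Ai^{p,0}\to\Ai^{p,1}\to\cdots$ is a resolution (the $\bar\pd$-Poincar\'e, i.e.\ Dolbeault--Grothendieck, lemma), one of the two spectral sequences of $C^{\bullet,\bullet}$ computes $H^{p,\bullet}_{\bar\pd}(\M)$. (ii) As $\GU$ is a Stein covering, every finite intersection $U_{\alpha_0\cdots\alpha_q}$ is Stein, so Cartan's Theorem~B for the coherent sheaf $\Omega^p$ (see e.g.\ \cite{GrRe1}) together with the Dolbeault isomorphism on $U_{\alpha_0\cdots\alpha_q}$ gives $H^s\big(\Ai^{p,\bullet}(U_{\alpha_0\cdots\alpha_q})\big)=0$ for $s\geq 1$, so the other spectral sequence computes $\check{H}^\bullet(\GU;\Omega^p)$. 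Comparing the two via the staircase, and noting that the explicit \v{C}ech homotopy operator $h$ attached to the partition of unity (recalled in the Remark after Proposition~\ref{exacttwoformsprop}) is precisely what introduces the wedge factors $\bar\pd\rho_{\alpha_1}\wedge\cdots\wedge\bar\pd\rho_{\alpha_q}$, one identifies the resulting abstract zig-zag isomorphism with the map induced by $\Di^{p,q}$, which is therefore an isomorphism.

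The only step requiring genuinely new input beyond the de Rham template is (ii): one needs $\bar\pd$ to be solvable locally on $(p,s)$-forms over the sets of the covering, and this is exactly what the Stein hypothesis provides, in place of the ordinary Poincar\'e lemma that sufficed for Lemma~\ref{DRisiso}; everything else is formally identical. Accordingly I would write the proof around the dictionary $d\rightsquigarrow\bar\pd$, de Rham complex $\rightsquigarrow$ Dolbeault complex $(\Ai^{p,\bullet},\bar\pd)$, $\C_\M\rightsquigarrow\Omega^p$, contractible intersections $\rightsquigarrow$ Stein intersections, and refer to \cite{Harv1}, \cite[Thm.~5.2.12]{Tark1} and \cite[\S1]{ToTo1} for the details.
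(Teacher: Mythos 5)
Your argument is correct: the substitution--Dolbeault identity $\bar\pd\,\Di^{p,q}(\eta)=(-1)^{q+1}\Di^{p,q+1}(\delta\eta)+(-1)^{q}\Di^{p,q}(\bar\pd\eta)$, fineness of the sheaves $\Ai^{p,s}$ (via the partition-of-unity homotopy $h$), the Dolbeault--Grothendieck lemma, and Cartan's Theorem B on the Stein intersections are exactly the ingredients needed, and the identification of the zig-zag isomorphism with the explicit formula $\Di^{p,q}$ via iterated application of $h$ and $\bar\pd$ is the standard collation argument. Note that the paper itself offers no proof of this lemma --- it is imported from \cite{Harv1}, \cite[Thm.~5.2.12]{Tark1}, \cite[\S1]{ToTo1} --- and your double-complex proof is essentially the one found in those references, transposed from the paper's own treatment of the de Rham case in Lemma~\ref{DRisiso}.
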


We will write $\Di:=\Di^{p,p}$ when $p\in\{1,\dots,\dim_\C\M\}$ is already given. 
\begin{Lemma}\label{HarvAtiyahlemma}
In the setting of Lemma \ref{HarvDolbisolemma}, let $\Ei$ be a holomorphic vector bundle on $\M$ with holomorphic transition functions $g_{\alpha\beta}:U_{\alpha\beta}\to\GeL(r,\C)$ and let $\xi^{p,0}$ be the Atiyah $(p,0)$-cocycle of $\Ei$, as in \eqref{tracedAtiyahpcocycle}. Suppose that $\M$ is Kähler, so that we can identify the Dolbeault cohomology group $H_{\bar{\pd}}^{p,p}(\M)$ with a subspace $H^{p,p}(\M)\subset H^{2p}_{\rm DR}(\M;\C)$. Then the $(p,p)$-form
$$
\Di(\xi^{p,0})=(-1)^{(p-1)!}\sum_{\alpha_0,\dots,\alpha_p}\rho_{\alpha_0}\tr_{\C^r}(\pd\log g_{\alpha_0\alpha_1}\wedge\cdots\wedge\pd\log g_{\alpha_{p-1}\alpha_p})\wedge \bar{\pd}\rho_{\alpha_1}\wedge\cdots\wedge\bar{\pd}\rho_{\alpha_p}
$$
is a Dolbeault representative of $p!(-2\pi i)^p$ times the $p$th Chern character $\ch_p(\Ei)\in H^{p,p}(\M)\cap H^{2p}(\M;\Q)$.
\end{Lemma}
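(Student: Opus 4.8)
The plan is to compare the Dolbeault representative $\Di(\xi^{p,0})$ with the de Rham representative $\Di\Ri^{p,p}(\xi^{E,p})$ already shown (in Theorem \ref{Cherrepthm} and Theorem \ref{chernisopropan}) to represent $p!(-2\pi i)^p\ch_p(\Ei)$, exploiting the fact that $\Ei$ is holomorphic. Since $\Ei$ is holomorphic, the transition functions $g_{\alpha\beta}$ satisfy $\bar{\pd}g_{\alpha\beta}=0$, so $d\log g_{\alpha\beta}=\pd\log g_{\alpha\beta}$ and hence the smooth Atiyah $p$-cocycle $\xi^{E,p}$ equals its own $(p,0)$-part $\xi^{p,0}$ as a $\check{C}^p(\GU;\Ai^{p,0})$-valued cochain; in particular $\xi^{p,0}$ takes values in the sheaf $\Omega^p$ of holomorphic $(p,0)$-forms (using $d(\pd\log g_{\alpha\beta})=\pd\log g_{\alpha\beta}\wedge\pd\log g_{\alpha\beta}$ and cyclicity of the trace, exactly as noted in \S\ref{smoothsection}), so the left-hand side $\Di(\xi^{p,0})$ is well-defined as an element of the Dolbeault complex and is $\bar{\pd}$-closed.

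Next I would argue that, on a Kähler manifold, under the inclusion $H^{p,p}_{\bar{\pd}}(\M)\hookrightarrow H^{2p}_{\rm DR}(\M;\C)$ the $\bar{\pd}$-cohomology class of $\Di(\xi^{p,0})$ maps to the de Rham class of $\Di\Ri^{p,p}(\xi^{E,p})$. The cleanest route is to observe that the two constructions differ only by replacing $d\rho_{\alpha_j}$ with $\bar{\pd}\rho_{\alpha_j}$: writing $d\rho_{\alpha_j}=\pd\rho_{\alpha_j}+\bar{\pd}\rho_{\alpha_j}$ and expanding $\Di\Ri^{p,p}(\xi^{E,p})$ multilinearly, the purely $\bar{\pd}$-term is exactly $\Di(\xi^{p,0})$ since the $\xi^{p,0}_{\alpha_0\cdots\alpha_p}$-factor is already of type $(p,0)$; every other term contains at least one $\pd\rho_{\alpha_j}$ and is therefore of Hodge type $(p',q)$ with $p'>p$. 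Thus $\Di(\xi^{p,0})$ is precisely the $(p,p)$-component of the closed $2p$-form $\Di\Ri^{p,p}(\xi^{E,p})$. By the Hodge decomposition of $H^{2p}_{\rm DR}(\M;\C)$ on a compact Kähler manifold, and because $\ch_p(\Ei)$ is of pure type $(p,p)$ (as $\Ei$ is holomorphic, it admits a connection of curvature type $(1,1)$, as recalled in the Introduction), the $(p,p)$-component of any closed representative of $p!(-2\pi i)^p\ch_p(\Ei)$ is itself a closed form representing the same class in $H^{p,p}(\M)$; the remaining components are separately closed and exact. Hence $[\Di(\xi^{p,0})]=p!(-2\pi i)^p\ch_p(\Ei)$ in $H^{p,p}(\M)$, and under the Harvey--Dolbeault isomorphism of Lemma \ref{HarvDolbisolemma} the cocycle $\xi^{p,0}\in\check{Z}^p(\GU;\Omega^p)$ represents this class.

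The main obstacle is making rigorous the passage from ``$(p,p)$-component of a $d$-closed form'' to ``$\bar{\pd}$-closed Dolbeault representative of the correct class.'' One must check that the $\Di\Ri$-expansion genuinely splits into $d$-closed pieces of pure Hodge type — equivalently, that the other components ($\pd$-containing terms) are $d$-exact — which uses the $\pd\bar\pd$-lemma or, more directly, the compatibility between Weil's \v{C}ech--de Rham isomorphism and Harvey's \v{C}ech--Dolbeault isomorphism. The latter is the content of \cite{Harv1} and the cited \cite[\S1]{ToTo1}: the two isomorphisms fit into a commuting diagram with the inclusion $\check{H}^p(\GU;\Omega^p)\to\check{H}^p(\GU;\Zi^p)$ on one side and $H^{p,p}_{\bar\pd}(\M)\hookrightarrow H^{2p}_{\rm DR}(\M;\C)$ on the other. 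Granting that compatibility, the result is immediate: $\xi^{p,0}$ is a $\Omega^p$-valued lift of the $\Zi^p$-valued cocycle $\xi^{E,p}$ representing $p!(-2\pi i)^p\ch_p(\Ei)$, so $\Di(\xi^{p,0})$ is the corresponding Dolbeault representative. Finally, the rationality statement $\ch_p(\Ei)\in H^{2p}(\M;\Q)$ is standard (Chern character of a vector bundle), and the factor $p!(-2\pi i)^p$ is exactly the normalization appearing in \eqref{chpoftheta}, carried through verbatim from Theorem \ref{Cherrepthm}.
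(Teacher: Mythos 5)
Your route is genuinely different from the paper's, and it can be made to work, but as written it has one false intermediate claim and one load-bearing step that is cited rather than proved. The paper does not pass through the Weil map at all at this point: it computes that for the Levi-Civita (Grassmann) connection of the idempotent $I^E$ one has $\theta^{0,1}=0$ because $\bar{\pd}g_{\alpha\beta}=0$, hence $\Theta^{0,2}=0$ and $\Di(\xi^{p,0})=\tr_{\C^{nr}}\big((\Theta^{1,1})^{\wedge p}\big)$ with $\Theta^{1,1}=\bar{\pd}\theta^{1,0}$; it then invokes the Chern--Simons transgression to the Chern connection $\theta^{E,h}$ (the cited result of Abate--Bracci--Suwa--Tovena) to exhibit $\frac{1}{p!(-2\pi i)^p}\Di(\xi^{p,0})-\ch_p(\theta^{E,h})$ as $\bar{\pd}$-exact, which gives the Dolbeault statement directly and in particular does not need the Kähler identification to be made explicit. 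Your argument instead feeds Theorem \ref{Cherrepthm} (that $\Di\Ri^{p,p}(\xi^{E,p})$ represents $p!(-2\pi i)^p\ch_p(\Ei)$ in de Rham cohomology) into a compatibility between Harvey's \v{C}ech--Dolbeault map and Weil's \v{C}ech--de Rham map under the inclusion $H^{p,p}_{\bar{\pd}}(\M)\hookrightarrow H^{2p}_{\rm DR}(\M;\C)$. That compatibility is true (and the paper itself asserts it, without proof, in the later subsection on passing from Dolbeault to de Rham cohomology), so your strategy is coherent; what it buys is that you never need the curvature decomposition or the transgression, only the already-established de Rham statement plus Hodge theory.

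The two soft spots: first, the sentence ``the remaining components are separately closed and exact'' is not correct and not needed. For a $d$-closed form $\omega=\Di\Ri^{p,p}(\xi^{E,p})$ all of whose components have holomorphic degree $\geq p$, the type-$(p,p+1)$ part of $d\omega=0$ gives only $\bar{\pd}\omega^{p,p}=0$; the component $\omega^{p,p}=\Di(\xi^{p,0})$ need not be $d$-closed, and the higher components need not be closed or exact individually. Second, the commuting-square compatibility is the actual mathematical content of your proof and cannot simply be ``granted'': neither \cite{Harv1} nor \cite{ToTo1} states it in the form you need (with the Kähler identification on the analytic side). To close the gap, argue as follows: $\Di\Ri^{p,p}(\xi^{E,p})$ lies in the Hodge filtration level $F^p\Ai^{2p}(\M)$ and is $d$-closed, so its $(p,p)$-component $\Di(\xi^{p,0})$ is $\bar{\pd}$-closed and its class in $H^{p,p}_{\bar{\pd}}(\M)\cong F^pH^{2p}/F^{p+1}H^{2p}$ is the image of the de Rham class $p!(-2\pi i)^p\ch_p(\Ei)$; since that de Rham class is of pure type $(p,p)$ (it is represented by $\ch_p(\theta^{E,h})$ for the Chern connection, whose curvature is of type $(1,1)$ -- which is essentially the same input the paper's transgression argument uses), the degeneration of the Frölicher spectral sequence on a compact Kähler manifold identifies this image with the class itself under $H^{p,p}_{\bar{\pd}}(\M)\cong H^{p,p}(\M)$. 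With that paragraph inserted, your proof is complete and independent of the \cite{ABST1} transgression machinery.
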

\begin{proof}
Let $I^E$ be the idempotent $C^\infty(\M)$-valued matrix with blocks $I_{\alpha\beta}^E=\rho_\alpha g_{\alpha\beta}$ as in Lemma \ref{Fedosovlemma}. Write the Levi-Civita connection $\theta=\theta^E$ as $\theta_\alpha=\theta_\alpha^{1,0}+\theta_\alpha^{0,1}$, with $\theta^{1,0}_\alpha\in\Ai^1(U_\alpha)\otimes\Mn_r(\C)$ and $\theta^{0,1}_\alpha\in\Ai^1(U_\alpha)\otimes\Mn_r(\C)$. Since $\bar{\pd}g_{\alpha\beta}=0$, we see from Equation \eqref{explgrassconn} that $\theta^{0,1}=0$. Hence, the curvature 2-form decomposes as $\Theta=\Theta^{1,1}+\Theta^{2,0}$, with 
$$
\Theta^{1,1}=\bar{\pd}\theta^{1,0}=\tr_{\C^{nr}}(I^E\, \pd I^E\wedge\bar{\pd}I^E)\in\Gamma^\infty(\M;\Ai^{1,1}\otimes_{\Ci^\infty_\M}\End\Ei),
$$
$$
\Theta^{2,0}=\pd\theta^{1,0}+\theta^{1,0}\wedge\theta^{1,0}\in\Gamma^\infty(\M;\Ai^{2,0}\otimes_{\Ci^\infty_\M}\End\Ei).
$$
In particular, $\tr\big((\bar{\pd}\theta^{1,0})^{\wedge p}\big)=\tr_{\C^{nr}}((\Theta^{1,1})^{\wedge p})$. Using $\bar{\pd}g_{\alpha\beta}=0$ again, the $(p,p)$-component of the formula in Proposition \ref{calcofsmoothChern} reads
$$
\tr_{\C^{nr}}((\Theta^{1,1})^{\wedge p})=\Di(\xi^{p,0}).
$$
This says precisely that $\Di(\xi^{p,0})$ is the ``Atiyah $p$-form", in the sense of \cite[Def. 1.8]{ABST1}, of the Levi-Civita connection on $\Ei$. Therefore, $\Di(\xi^{p,0})$ is a Dolbeault representative of $p!(-2\pi i)^p$ times the $p$th Chern character of $\Ei$ by \cite[\S\S1.2, 1.3]{ABST1}. To recall how this goes, let $\theta^{E,h}$ be the Chern connection for some Hermitian metric $h$ on $\Ei$. Then $\ch_p(\theta^{E,h})$ is of type $(p,p)$ and is both a Dolbeault and a de Rham representative of $\ch_p(\Ei)$. The $(p,p-1)$-component $\cs_{p,p-1}(\theta^E,\theta^{E,h})$ of the Chern--Simons transgression from $\ch_p(I^E)$ to $\ch_p(\theta^{E,h})$ then satisfies
$$
\bar{\pd}\cs_{p,p-1}(\theta^E,\theta^{E,h})=\frac{1}{p!(-2\pi i)^p}\Di(\xi^{p,0})-\ch_p(\theta^{E,h}),
$$  
whence the class of $\Di(\xi^{p,0})/p!(-2\pi i)^p$ in $H_{\bar{\pd}}^{p,p}(\M)$ is equal to that of $\ch_p(\theta^{E,h})$ and identifies with $\ch_p(\Ei)\in H^{p,p}(\M)\subset H^{2p}_{\rm DR}(\M;\C)$. 
\end{proof}

\subsection{From Dolbeault to de Rham cohomology}



Let $\Ei$ be a smooth vector bundle on a compact complex-analytic manifold $\M$. The Atiyah $(p,0)$-cocycle $\xi^{p,0}=\xi^{E,p,0}$ of $\Ei$ is $\pd$-closed. If $\xi^{p,0}$ is moreover $\bar{\pd}$-closed, then it defines a de Rham cohomology class $(-2\pi i)^{-p}[\Di\Ri^{p,p}(\xi^{p,0})]\in H^{2p}(\M;\Q)$ with rational periods. We want to understand which analytic cocycles $\eta\in\check{Z}^p(\GU;\Omega^p)$ are cohomologous to linear combinations of $\bar{\pd}$-closed Atiyah $(p,0)$-cocycles. For that we want to find the obstruction to mapping the class $[\eta]$ into de Rham cohomology where we can apply Theorem \ref{chernisopropan}. Since $\bar{\pd}\eta=0$ does not imply $d\eta=0$ for arbitrary $\eta\in\check{Z}^p(\GU;\Omega^p)$, the differential $2p$-form $\Di\Ri^{p,p}(\eta)$ need not be $d$-closed. In essence, there is no easy way of relating Dolbeault cohomology classes to de Rham cohomology classes. 

One solution could be to try to use not $\check{Z}^p(\GU;\Omega^p)$ but a space of cocycles which calculates Bott--Chern cohomology (see \cite[\S3.8]{Kooi1}), provided one could find an analogue of the map $\Di$ defined in Lemma \ref{HarvAtiyahlemma} which induces an isomorphism of some \v{C}ech cohomology with Bott--Chern cohomology. This becomes complicated but the problem resolves completely if we restrict attention to compact Kähler manifolds, due to the following fact.
\begin{Lemma}[{\cite[Prop. 8.9]{Dema2}}]\label{allholoareharm}
If $\M$ is a compact Kähler manifold then 
$$
\{\omega\in\Omega^p|\ \pd\omega=0\}=\Omega^p,
$$
i.e. every holomorphic $p$-form on $\M$ is $\pd$-closed. 
\end{Lemma}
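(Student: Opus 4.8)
The plan is to read this off from Hodge theory on compact Kähler manifolds, the point being to show that a global holomorphic $p$-form is $d$-harmonic, hence $d$-closed, hence $\pd$-closed. So let $\omega$ be a global holomorphic $p$-form on $\M$, i.e.\ a smooth global section of $\Omega^p\subset\Ai^{p,0}$; by definition $\bar\pd\omega=0$. The inclusion $\{\omega\in\Omega^p\mid\pd\omega=0\}\subseteq\Omega^p$ is trivial, so only the reverse inclusion needs an argument.

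First I would record, without using the Kähler hypothesis at all, that $\bar\pd^*\omega=0$: the formal adjoint $\bar\pd^*$ lowers the anti-holomorphic degree by one, so it maps $\Ai^{p,0}$ into $\Ai^{p,-1}=0$, and this is purely a matter of bidegree. Combined with $\bar\pd\omega=0$ this gives $\Delta_{\bar\pd}\omega=(\bar\pd\,\bar\pd^*+\bar\pd^*\bar\pd)\omega=0$, that is, $\omega$ is $\bar\pd$-harmonic on the compact complex manifold $\M$.

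Now comes the one place where compact Kähler is essential: the fundamental identity $\Delta_d=2\Delta_{\bar\pd}$ of Hodge theory on a compact Kähler manifold, itself a consequence of the Kähler commutation relations such as $[\Lambda,\bar\pd]=-i\pd^*$ (see \cite{Dema2}). It gives $\Delta_d\omega=0$, and since $\M$ is compact, integration by parts yields $0=\langle\Delta_d\omega,\omega\rangle=\|d\omega\|^2+\|d^*\omega\|^2$, whence $d\omega=0$. As $d\omega=\pd\omega+\bar\pd\omega$ and $\bar\pd\omega=0$, we conclude $\pd\omega=0$, which finishes the proof.

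There is no genuine obstacle here: the statement is a standard corollary of the Hodge decomposition theorem together with the Kähler identities, all of which may simply be quoted. The only real content is the equality $\Delta_d=2\Delta_{\bar\pd}$, which is precisely what makes the Kähler assumption indispensable, since on a general compact complex manifold a holomorphic $p$-form need not be $\pd$-closed. If one wished to bypass the Laplacians, an alternative is to note that $\pd\omega$ is itself a holomorphic $(p+1)$-form which is $d$-exact ($\pd\omega=d\omega$) and to invoke the injectivity of $H^0(\M;\Omega^{p+1})\to H^{p+1}_{\rm DR}(\M;\C)$ on compact Kähler manifolds; but that injectivity is proved by the same Hodge-theoretic input, so nothing is actually saved.
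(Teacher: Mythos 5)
Your proof is correct and is exactly the standard Hodge-theoretic argument (holomorphic forms are $\Delta_{\bar\pd}$-harmonic by bidegree, then $\Delta_d=2\Delta_{\bar\pd}$ on compact Kähler manifolds forces $d\omega=0$, hence $\pd\omega=0$), which is precisely the content of the result in \cite[Prop. 8.9]{Dema2} that the paper simply cites without reproving. Nothing further is needed.
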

Thus, while for arbitrary $(p,q)$-classes a Dolbeault representative is not necessarily a de Rham representative even for a compact Kähler manifold, $(p,0)$-forms on a compact Kähler manifold are $d$-closed whenever they are $\bar{\pd}$-closed. Moreover, it is easy to see that if $\M$ is compact Kähler then the isomorphism $H^{p,p}_{\bar{\pd}}(\M)\to H^{p,p}(\M)\subset H^{2p}_{\rm DR}(\M;\C)$ is explicitly given by
$$
H^{p,p}_{\bar{\pd}}(\M)\to H^{p,p}(\M),\qquad [\Di^{p,p}(\eta)]\to[\Di\Ri^{p,p}(\eta)],
$$
because $[\eta]=[\eta']$ in $\check{H}^p(\M;\Omega^p)$ if and only if $[\Di\Ri^{p,p}(\eta)]=[\Di\Ri^{p,p}(\eta')]$. 
With this in mind we can deduce the following, which is the main result of the paper and may be useful in an attempt to prove the Hodge conjecture. Recall our notation: in order to suppress factors of $(-2\pi i)^{-p}$ we consider in $\C$ the subgroup $\Z(p):=(2\pi i)^p\Z$ and the $\Q$-vector subspace $\Q(p):=(2\pi i)^p\Q$ generated by $(-2\pi i)^p=(2\pi i)^p(-1)^p$. 
\begin{thm}[Hodge representatives]\label{steponethm}
Let $\M$ be a compact Kähler manifold. Then every class in $H^{p,p}(\M)\cap H^{2p}(\M;\Q(p))$ has a Dolbeault representative which is a $\Q$-linear combination of the $(p,p)$-forms $\Di(\xi^{p,0})$ where the $\xi^{p,0}$'s are $\bar{\pd}$-closed Atiyah $(p,0)$-cocycles of smooth vector bundles over $\M$.
\end{thm}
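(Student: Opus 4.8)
The plan is to transport the problem into \v{C}ech cohomology, use Theorem \ref{chernisopropan} to build a \v{C}ech representative out of smooth Atiyah $p$-cocycles, and then pass to type $(p,0)$ via the compatibility --- special to the compact Kähler setting --- between Weil's \v{C}ech--de Rham isomorphism and Harvey's \v{C}ech--Dolbeault isomorphism.

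First I would fix a finite covering $\GU=(U_\alpha)_{\alpha=1,\dots,n}$ all of whose non-empty finite intersections are biholomorphic to bounded star-shaped domains in $\C^{\dim_\C\M}$; such a covering exists on any compact complex manifold, and its intersections are simultaneously Stein and contractible, so both Lemma \ref{HarvDolbisolemma} and Theorem \ref{chernisopropan} apply to $\GU$. Given $c\in H^{p,p}(\M)\cap H^{2p}(\M;\Q(p))$, I would use the description of the isomorphism $\check H^p(\GU;\Omega^p)\cong H^{p,p}(\M)$ recalled just before the theorem to write $c=[\Di\Ri^{p,p}(\eta_0)]$ for a cocycle $\eta_0\in\check Z^p(\GU;\Omega^p)$, with $\Di^{p,p}(\eta_0)$, the component of type $(p,p)$ of $\Di\Ri^{p,p}(\eta_0)$, a Dolbeault representative of $c$; here the compact Kähler hypothesis enters for the first time, through Lemma \ref{allholoareharm}, to ensure that $\eta_0$ may moreover be taken with $\pd$-closed components, so that $\eta_0\in\check Z^p(\GU;\Zi^p)$. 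Since $(-2\pi i)^{-p}c$ has rational periods, Theorem \ref{chernisopropan} then supplies smooth vector bundles $\Ei_j$, rationals $q_j$ and a cochain $\zeta\in\check C^{p-1}(\GU;\Zi^p)$ with
$$
\eta_0=\sum_j q_j\,\xi^{E_j,p}+\delta\zeta\qquad\text{in }\check Z^p(\GU;\Zi^p).
$$

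Next I would extract the component of type $(p,0)$ of this identity. As the type decomposition commutes with $\delta$ and $\eta_0$ is purely of type $(p,0)$,
$$
\eta_0=\sum_j q_j\,\xi^{E_j,p,0}+\delta(\zeta^{p,0})
$$
as $\Ai^{p,0}$-valued $p$-cochains, where $\xi^{E_j,p,0}$ is the Atiyah $(p,0)$-cocycle \eqref{tracedAtiyahpcocycle} of $\Ei_j$ and $\zeta^{p,0}$ is the $(p,0)$-part of $\zeta$. The idea is then to read this identity inside the \v{C}ech--Dolbeault double complex $\bigl(\check C^s(\GU;\Ai^{p,t}),\delta,\bar\pd\bigr)$ and apply Harvey's map: using $\bar\pd\eta_0=0$ together with the relation $\bar\pd\zeta^{p,0}=-\pd\zeta^{p-1,1}$ forced by $d\zeta=0$, one sees that $\sum_j q_j\,\xi^{E_j,p,0}$ extends, by the usual zig-zag through $\zeta^{p,0}$, to a total cocycle cohomologous to $\eta_0$; Harvey's construction then extracts from it a $\bar\pd$-closed $(p,p)$-form differing from $\sum_j q_j\,\Di(\xi^{E_j,p,0})$ only by a $\bar\pd$-exact term. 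Thus $\sum_j q_j\,\Di(\xi^{E_j,p,0})$ is a Dolbeault representative of $c$ --- provided the $\xi^{E_j,p,0}$ are $\bar\pd$-closed.

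The step I expect to be the main obstacle is precisely this proviso. For a general smooth bundle $\Ei$ the cocycle $\xi^{E,p,0}$ is $\Ai^{p,0}$-valued but not $\Omega^p$-valued, and the identity above only shows that $\bar\pd\bigl(\sum_j q_j\,\xi^{E_j,p,0}\bigr)$ is the \v{C}ech coboundary $\delta(\pd\zeta^{p-1,1})$ (up to sign). To upgrade this to genuine $\bar\pd$-closedness I would carry out one further tic-tac-toe in the double complex, using that the sheaves $\Ai^{p,q}$ are fine (so $\check H^{\geq 1}(\GU;\Ai^{p,q})=0$) to absorb the coboundary into a modification of the data, and then invoke Lemma \ref{allholoareharm}, which holds because $\M$ is compact Kähler rather than merely compact, to conclude that the adjusted cocycle is $\bar\pd$-closed, hence $\Omega^p$-valued. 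There finally remains the delicate point of recognizing the adjusted, $\pd$- and $\bar\pd$-closed $\Ai^{p,0}$-valued $p$-cocycle with rational periods as a $\Q$-linear combination of Atiyah $(p,0)$-cocycles of smooth vector bundles: for $p=1$ this is done by exponentiating local holomorphic primitives of the logarithmic $1$-cocycle, the integrality needed to make the exponentials into a transition cocycle being obtained by clearing denominators, and it is the organization of the higher-degree case --- in which the Kähler hypothesis is genuinely used --- that forms the technical heart of the proof.
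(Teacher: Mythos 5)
Your proposal reproduces the paper's opening moves correctly: fix a Stein covering with contractible intersections, write the class as $[\Di^{p,p}(\eta_0)]$ with $\eta_0\in\check Z^p(\GU;\Omega^p)$ via Lemma \ref{HarvDolbisolemma}, use Lemma \ref{allholoareharm} to see $d\eta_0=0$ so that $\eta_0\in\check Z^p(\GU;\Zi^p)$ represents a rational de Rham class, apply Theorem \ref{chernisopropan} to get $\eta_0=\sum_j q_j\,\xi^{E_j,p}+\delta\zeta$, and project to type $(p,0)$. But from that point on your plan has a genuine gap, and you in effect concede it: after noting that the $\xi^{E_j,p,0}$ need not be $\bar\pd$-closed, you propose to "absorb the coboundary into a modification of the data" by a tic-tac-toe argument using fineness of $\Ai^{p,q}$, and then admit that recognizing the adjusted $\Omega^p$-valued cocycle as a $\Q$-linear combination of Atiyah $(p,0)$-cocycles remains to be done, calling it "the technical heart of the proof." That last step is not a technicality to be organized later --- it \emph{is} the content of the theorem (an arbitrary $\Omega^p$-valued cocycle with rational periods being expressible through Atiyah cocycles is exactly what must be proved), so the argument as written is circular/incomplete. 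Moreover, any modification of $\sum_j q_j\xi^{E_j,p,0}$ by a \v{C}ech coboundary generically destroys the Atiyah-cocycle structure, so the fineness step works against, not toward, the required conclusion; and the exponentiation device you sketch for $p=1$ has no stated extension to $p\geq 2$, where the cocycle is a cup-product of logarithmic derivatives rather than a single $d\log$.

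The idea you are missing is the paper's use of Theorem \ref{exactcoboundthm}: one never modifies the cocycle $\eta_0$ at all, and one never needs the individual $\xi^{E_j,p,0}$ to be $\bar\pd$-closed. Instead, the coboundary term $\delta\zeta\in\check Z^p(\GU;\Zi^p)$ (whose image under $\Di\Ri^{p,p}$ is globally $d$-exact) is itself realized as the smooth Atiyah $p$-cocycle $\xi^{G,p}$ of a \emph{smoothly trivial} vector bundle $\Gi$ (built from connections on trivial line bundles via Lemma \ref{Dattalemma} and Proposition \ref{exacttwoformsprop}). Since Atiyah cocycles add under direct sums (block-diagonal transition functions), $\eta_0=\xi^{E,p}+\xi^{G,p}=\xi^{E\oplus G,p}$ holds \emph{on the nose} as cocycles, not merely up to cohomology. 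Because $\eta_0$ is $\Omega^p$-valued, this single Atiyah cocycle is automatically of pure type $(p,0)$ and $\bar\pd$-closed, i.e. $\eta_0=\xi^{F,p,0}$ with $\Fi=\Ei\oplus\Gi$; the rational statement then follows since $H^{p,p}(\M)\cap H^{2p}(\M;\Q(p))$ is generated over $\Q$ by the torsion-free integral classes treated this way. So the fix is to change the \emph{bundle} rather than the cocycle, which is precisely what Theorem \ref{exactcoboundthm} makes possible and what your outline lacks.
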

\begin{proof}
Let $H^{2p}_0(\M;\Z(p))$ be the torsion-free part of $H^{2p}(\M;\Z(p))$, regarded as a subgroup of $H^{2p}(\M;\C)$. By Lemma \ref{HarvDolbisolemma}, every class in $H^{p,p}(\M)$, and in particular every class in $H^{p,p}(\M)\cap H^{2p}_0(\M;\Z(p))$, has a representative of the form $\Di^{p,p}(\eta)$ for some $\eta\in\check{Z}^p(\M;\Omega^p)$. Observe that $d\eta=0$ by Lemma \ref{allholoareharm}, which implies that $\Di\Ri^{p,p}(\eta)$ is a closed $2p$-form on $\M$ representing a $\Z(p)$-valued de Rham class. Now Theorem \ref{chernisopropan} shows that a de Rham $2p$-class is $\Z(p)$-valued if and only if it can be represented a $\Z$-linear combination of $2p$-forms of the type $\Di\Ri^{p,p}(\xi^{E,p})$ for some smooth vector bundle $\Ei$. Since we will anyway take $\Q$-linear combinations in the end, let us consider only $\eta$'s for which we can find $\Ei$ such that 
$$
[\Di\Ri^{p,p}(\xi^{E,p})]=[\Di\Ri^{p,p}(\eta)]\in H^{2p}_{\rm DR}(\M;\R),
$$
or equivalently 
$$
[\xi^{E,p}]=[\eta]\in\check{H}^p(\M;\Zi^p).
$$
Since $\eta$ is of type $(p,0)$, this gives $\eta=\xi^{E,p}+\delta\zeta=\xi^{E,p,0}+(\delta\zeta)^{p,0}$ for the Atiyah $(p,0)$-cocycle $\xi^{E,p,0}$ and some $\zeta\in\check{Z}^{p-1}(\GU;\Zi^p)$, where $\zeta^{p,0}$ denotes the $\Ai^{p,0}$-valued part of $\zeta$. But $d\zeta=0$ does not imply $d\zeta^{p,0}=0$ (hence not $\bar{\pd}\zeta^{p,0}=0$). Therefore, it can well happen that $\bar{\pd}\xi^{E,p,0}\ne 0$.

To our rescue comes Theorem \ref{exactcoboundthm} which shows that $\delta\zeta$ is the smooth Atiyah $p$-cocycle of some smoothly trivial vector bundle on $\M$. Therefore, $\eta=\xi^{E,p}+\delta\zeta$ is the smooth Atiyah $p$-cocycle of some smooth vector bundle $\Fi$ on $\M$ with $[\Fi]-[\Ei]=0\in K^0(\M)\otimes_\Z\Q$. So a $\bar{\pd}$-closed Atiyah $(p,0)$-cocycle $\eta=\xi^{F,p}=\xi^{F,p,0}$ represents the given $\Z(p)$-valued $(p,p)$-class. 

The space $H^{p,p}(\M)\cap H^{2p}(\M;\Q(p))$ of $\Q(p)$-valued $(p,p)$-classes is generated over $\Q$ by $H^{p,p}(\M)\cap H^{2p}_0(\M;\Z(p))$, so every $\Q(p)$-valued $(p,p)$-class can be represented by a $\Q$-linear combination of $\bar{\pd}$-closed Atiyah $(p,0)$-cocycles.
\end{proof}



Recall that if $U$ is an open subset of any complex-analytic manifold $\M$, a smooth $(p,0)$-form $\eta\in\Ai^{p,0}(U)$ is holomorphic, i.e. belongs to $\bwedge^p\Omega^1(U)$, if and only if $\bar{\pd}\eta=0$ \cite[Thm. II.2.6]{GrRe2}. That is, the $\Oi_\M$-module $\Omega^p:=\Ker(\bar{\pd}:\Ai^{p,0}\to\Ai^{p,1})$ can be described as the sheaf of holomorphic $p$-forms,
$$
\Omega^p=\bwedge^p\Omega^1.
$$
Therefore, if $\xi^{p,0}=\tr_{\C^r}(\pd\log g_{\alpha_0\alpha_1}\wedge\cdots\wedge\pd\log g_{\alpha_{p-1}\alpha_p})$ is a $\bar{\pd}$-closed Atiyah cocycle of some vector bundle $\Ei$ on a compact complex-analytic manifold $\M$, we can write $\xi^{p,0}$ as a $\C$-linear combination of wedge products of holomorphic 1-forms. This does not imply that the 1-forms $\pd\log g_{\alpha\beta}$ are holomorphic. To show that this is always the case (up to smooth isomorphism of bundles) when $\M$ is projective (and $\xi^{p,0}\ne 0$) is to prove the Hodge $(p,p)$-conjeccture.

\subsection{Algebraic classes}\label{algsection}
We can make Theorem \ref{steponethm} a bit stronger by specializing to projective Kähler manifolds. 

\subsubsection{Preliminaries on Koszul cocycles}
Let us first fix the notation and recall some facts about algebraic \v{C}ech cocycles which might be useful to keep in mind for those that are interested in applying the results of the present paper. 

Projective space $\C\Pb^{n-1}$ has a covering by Stein manifolds $V_\alpha\cong\C^{n-1}$, which are the projectivizations of the open subsets $\{(z_1,\dots,z_n)\in\C^n|\ z_\alpha\ne 0\}$ of $\C^n$. 
When we are discussing a (smooth) projective variety $\M\subset\C\Pb^{n-1}$, we will always consider the covering $\GU=(U_\alpha)_{\alpha=1,\dots,n}$ given by
$$
U_\alpha:=\M\cap V_\alpha=\M\cap\{(z_1,\dots,z_n)\in\C^n|\ z_\alpha\ne 0\}/\C^\times.
$$
Let $A=\bigoplus_{m\in\N_0}A_m$ be the homogeneous coordinate ring of $\M$ with respect to the given embedding $\M\subset\C\Pb^{n-1}$, where $A_m$ is the subspace of elements which are homogeneous of degree $m$. Let $Z_\alpha$ be the normalized coordinate function $z_\alpha/\|z\|$ on $\C^n$ restricted the affine cone $\Spec(A)\subset\C^n$ of $\M$. Then we have
$$
\sum^n_{\alpha=1}Z_\alpha Z_\alpha^*=\bone\in C^\infty(\M),
$$
and $Z_1,\dots,Z_n$ generate $A$ as an algebra over $\C$. 

For a graded $A$-module $F=\bigoplus_{k\in\Z}F_k$ we denote by $F_k$ the subspace of elements which are homogeneous of degree $k$. 
Write $F_{(Z_\alpha)}$ for the homogeneous localization of $F$ at the multiplicatively closed subset $\{Z_\alpha^m|\ m\in\N_0\}$, i.e. $F_{(Z_\alpha)}:=(F_{Z_\alpha})_0$ is the degree-0 part of the usual module of fractions $F_{Z_\alpha}=A_{Z_\alpha}\otimes_AF$. Recall \cite[Thm. 13.20]{GoWe1}
that every quasi-coherent algebraic sheaf $\Fi$ on $\M$ is of the form 
$$
\Fi(U_\alpha)=F_{(Z_\alpha)},\qquad\forall \alpha\in\{1,\dots,n\}
$$
for some graded $A$-module $F$. Therefore the \v{C}ech cochain groups $\check{C}^q(\GU;\Fi):=\bigoplus_{\alpha_0,\dots,\alpha_q}\Fi(U_{\alpha_0\cdots\alpha_q})$ of the algebraic sheaf $\Fi$ are given by
$$
\check{C}^q(\GU;\Fi)=\bigoplus_{\alpha_0,\dots,\alpha_q}F_{(Z_{\alpha_0}\cdots Z_{\alpha_q})}.
$$
So an $\Fi$-valued \v{C}ech $q$-cochain is a family $\eta$ of elements $\eta_{\alpha_0\cdots\alpha_q}\in F_{(Z_{\alpha_0}\cdots Z_{\alpha_q})}$ of the form
$$
\eta_{\alpha_0\cdots\alpha_q}=\frac{\eta_{\alpha_0\cdots\alpha_q}^{(m)}}{(Z_{\alpha_0}\cdots Z_{\alpha_q})^m}
$$
for some $m\in\N$ (since the number of sets in the open covering is finite, we can find $m$ large enough to make it the same for all $\alpha_0,\dots,\alpha_q\in\{1,\dots,n\}$). The \v{C}ech coboundary of $\eta$ then takes the form
\begin{align*}
(\delta \eta)_{\alpha_0\cdots\alpha_{q+1}}&=\frac{\eta_{\alpha_1\cdots\alpha_{q+1}}^{(m)}}{(Z_{\alpha_1}\cdots Z_{\alpha_{q+1}})^m}-\frac{\eta_{\alpha_0\alpha_2\cdots\alpha_{q+1}}^{(m)}}{(Z_{\alpha_0}Z_{\alpha_2}\cdots Z_{\alpha_{q+1}})^m}+\cdots+(-1)^{q+1}\frac{\eta^{(m)}_{\alpha_0\cdots\alpha_q}}{(Z_{\alpha_0}\cdots Z_{\alpha_q})^m}
\\&=\frac{Z_{\alpha_0}^m\eta_{\alpha_1\cdots\alpha_{q+1}}^{(m)}-Z_{\alpha_1}^m\eta_{\alpha_0\alpha_2\cdots\alpha_{q+1}}^{(m)}+\cdots+(-1)^{q+1}Z_{\alpha_{q+1}}^m\eta_{\alpha_0\cdots\alpha_q}^{(m)}}{(Z_{\alpha_0}\cdots Z_{\alpha_{q+1}})^m}.
\end{align*}
The condition $\delta\eta=0$ says that the above expression is zero in the localization $F_{(Z_{\alpha_0}\cdots Z_{\alpha_{q+1}})}$, which is the same as saying that there is an $r\in\N_0$ such that
$$
(Z_{\alpha_0}\cdots Z_{\alpha_{q+1}})^r(Z_{\alpha_0}^m\eta_{\alpha_1\cdots\alpha_{q+1}}^{(m)}-Z_{\alpha_1}^m\eta_{\alpha_0\alpha_2\cdots\alpha_{q+1}}^{(m)}+\cdots+(-1)^{q+1}Z_{\alpha_{q+1}}\eta_{\alpha_0\cdots\alpha_q}^{(m)})=0.
$$
So if we (following \cite[\S VII.5]{MuOd1}) set $l:=m+r$ and
\begin{equation}\label{indsystKosz}
\eta_{\alpha_0\cdots\alpha_q}^{(l)}:=(Z_{\alpha_0}\cdots Z_{\alpha_q})^{l-m}\eta_{\alpha_0\cdots\alpha_q}^{(m)}
\end{equation}
so that $\eta_{\alpha_0\cdots\alpha_q}=\eta_{\alpha_0\cdots\alpha_q}^{(l)}/(Z_{\alpha_0}\cdots Z_{\alpha_q})^l$, then the cocycle condition $(\delta\eta)_{\alpha_0\cdots\alpha_{q+1}}=0$ on $U_{\alpha_0\cdots\alpha_q}$ is equivalent to 
\begin{equation}\label{algtrasfo}
Z_{\alpha_0}^l\eta_{\alpha_1\cdots\alpha_{q+1}}^{(l)}-Z_{\alpha_1}^l\eta_{\alpha_0\alpha_2\cdots\alpha_{q+1}}^{(l)}
+\cdots+(-1)^{q+1}Z_{\alpha_{q+1}}^l\eta_{\alpha_0\cdots\alpha_q}^{(l)}=0.
\end{equation}
in $F_{(Z_{\alpha_0}\cdots Z_{\alpha_{q+1}})}$. In other words, provided that $\eta_{\alpha_0\cdots\alpha_q}$ is antisymmetric under interchange of any two indices, $\eta^{(m)}$ is a $(q+1)$-cocycle in the homogeneous Koszul complex $(K^\bullet(Z_1^m,\dots,Z_n^m;F)_0$ of the $A$-module $F$ (the degree-0 part of the usual Koszul complex as defined e.g. in \cite[\S17]{Eise2}). The subcomplex $\check{C}^\bullet_{\rm alt}(\GU;\Fi)\subset\check{C}^\bullet(\GU;\Fi)$ of antisymmetric \v{C}ech $q$-cochains is thus an inductive limit
$$
\check{C}^q_{\rm alt}(\GU;\Fi)=\lim_{m\to\infty}K^{q+1}(Z_1^m,\dots,Z_n^m;F)_0
$$
of homogeneous Koszul complexes, where the inductive system is given by $\eta^{(m)}\to\eta^{(l)}$ as in Equation \eqref{indsystKosz}; see \cite[\S\S61-64]{Serre2} (and also \cite[\S5]{BrSh1}, \cite[\S2.3]{BrFo1}, \cite{Green1}, \cite[\S5]{Ilye1}, \cite[\S VII.5]{MuOd1}). 



\subsubsection{Algebraic representatives of Hodge classes}

As above, let $A$ be the homogeneous coordinate ring of the variety under consideration. We denote by $\Omega_A^1$ the graded $A$-module of Kähler differentials and for $p\geq 2$ we set $\Omega^p_A:=\bwedge^p\Omega^1_A$.
\begin{thm}[Hodge representatives for projective manifolds]\label{projsteponethm}
Let $\M\subset\C\Pb^{n-1}$ be a smooth projective variety. Then every class in $H^{p,p}(\M)\cap H^{2p}(\M;\Q(p))$ has a Dolbeault representative of the form $\Di^{p,p}(\eta)$ where $\eta$ is a $\Q$-linear combination of $\bar{\pd}$-closed Atiyah $(p,0)$-cocycles of smooth vector bundles over $\M$ and moreover we can choose the cocycle $\eta$ to be algebraic, i.e. with
$$
\eta_{\alpha_0\cdots\alpha_p}=\frac{\eta_{\alpha_0\cdots\alpha_q}^{(m)}}{(Z_{\alpha_0}\cdots Z_{\alpha_q})^m}\in (\Omega_A^p)_{(Z_{\alpha_0}\cdots Z_{\alpha_p})}
$$
and transforming according to \eqref{algtrasfo}. 
\end{thm}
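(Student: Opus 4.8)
The plan is to take the representative furnished by Theorem \ref{steponethm} --- which is already a $\Q$-linear combination of $\bar{\pd}$-closed Atiyah $(p,0)$-cocycles --- and to replace its underlying \v{C}ech cocycle by a \emph{cohomologous} one that is algebraic, using GAGA; the point will then be to verify that this replacement re-introduces only a coboundary which is \emph{itself} an Atiyah $(p,0)$-cocycle, so that the new representative is still of the form demanded by the theorem.

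So fix $c\in H^{p,p}(\M)\cap H^{2p}(\M;\Q(p))$. By Theorem \ref{steponethm} we may write $c=[\Di^{p,p}(\eta_0)]$ where $\eta_0=\sum_j q_j\,\xi^{E_j,p,0}$ is a $\Q$-linear combination of $\bar{\pd}$-closed Atiyah $(p,0)$-cocycles of smooth vector bundles $\Ei_j$ on $\M$; being $\bar{\pd}$-closed and of type $(p,0)$, $\eta_0$ takes values in the sheaf $\Omega^p$ of holomorphic $p$-forms, i.e.\ $\eta_0\in\check{Z}^p(\GU;\Omega^p)$. Now I would bring in GAGA. Since $\M$ is projective and $\GU=(U_\alpha)_{\alpha=1,\dots,n}$ consists of the affine pieces $U_\alpha=\M\cap V_\alpha$ --- every finite intersection $U_{\alpha_0\cdots\alpha_q}$ being a closed subvariety of an affine space, hence simultaneously affine and Stein --- the covering $\GU$ is acyclic both for the coherent algebraic sheaf of regular $p$-forms on $\M$ (higher quasi-coherent cohomology vanishes on affines) and for the analytic sheaf $\Omega^p$ (Cartan's Theorem B), while GAGA (Serre) identifies these two cohomologies; since $\Di^{p,p}$ in turn induces the Dolbeault isomorphism $\check{H}^p(\GU;\Omega^p)\cong H^{p,p}_{\bar{\pd}}(\M)$ (Lemma \ref{HarvDolbisolemma}), it follows that $[\eta_0]$ has an algebraic representative: there are an algebraic cocycle $\eta\in\check{Z}^p(\GU;\Omega^p)$, with $\eta_{\alpha_0\cdots\alpha_p}=\eta^{(m)}_{\alpha_0\cdots\alpha_p}/(Z_{\alpha_0}\cdots Z_{\alpha_p})^m$ transforming by \eqref{algtrasfo} as a homogeneous Koszul cocycle in the sense of \S\ref{algsection}, and a cochain $\sigma\in\check{C}^{p-1}(\GU;\Omega^p)$ with $\eta=\eta_0+\delta\sigma$. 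Since $\Di^{p,p}$ descends to the Dolbeault isomorphism, it sends the $\Omega^p$-valued coboundary $\delta\sigma$ to a $\bar{\pd}$-exact form, so $[\Di^{p,p}(\eta)]=[\Di^{p,p}(\eta_0)]=c$ and $\Di^{p,p}(\eta)$ is again a Dolbeault representative of $c$.

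It remains to see that $\eta$ is still a $\Q$-linear combination of $\bar{\pd}$-closed Atiyah $(p,0)$-cocycles of smooth vector bundles, and here I would reason exactly as in the proof of Theorem \ref{steponethm}. The \v{C}ech coboundary $\delta\sigma$ is an $\Omega^p$-valued $p$-cocycle, hence --- the manifold being compact Kähler --- $\Zi^p$-valued (Lemma \ref{allholoareharm}) and representing $0\in\check{H}^p(\GU;\Zi^p)$; therefore Theorem \ref{exactcoboundthm}, applied as in the proof of Theorem \ref{steponethm}, shows that $\delta\sigma$ equals the smooth Atiyah $p$-cocycle $\xi^{G,p}$ of some smoothly trivial vector bundle $\Gi$ on $\M$, and since $\delta\sigma$ has pure type $(p,0)$ and is $\bar{\pd}$-closed this reads $\delta\sigma=\xi^{G,p,0}$. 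Hence $\eta=\sum_j q_j\,\xi^{E_j,p,0}+\xi^{G,p,0}$ is a $\Q$-linear combination of $\bar{\pd}$-closed Atiyah $(p,0)$-cocycles of smooth vector bundles over $\M$, and it is algebraic; together with $[\Di^{p,p}(\eta)]=c$ this is exactly the assertion of the theorem. For a general class one simply keeps the $\Q$-linear combination throughout, the relevant \v{C}ech cochain groups being localizations and hence closed under rational combinations and under clearing a common denominator.

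The step I expect to be the real obstacle is the one that keeps algebraicity alive: being algebraic is a property of the \v{C}ech \emph{cochain}, not of its cohomology class, so it does not suffice to know that the correcting coboundary $\delta\sigma$ is cohomologous to an Atiyah cocycle --- one needs it to literally \emph{equal} one, which is precisely what Theorem \ref{exactcoboundthm} supplies and where the argument would otherwise stall. A secondary, purely technical point requiring care is matching the module-theoretic description exactly to $(\Omega^p_A)_{(Z_{\alpha_0}\cdots Z_{\alpha_p})}$ with $\Omega^p_A=\bwedge^p\Omega^1_A$: on $\C\Pb^{n-1}$ the sheaf of regular $p$-forms differs from the sheafification of $\bwedge^p\Omega^1_A$ by the Euler-sequence twist, so one should either choose the lifts of the components $\eta_{\alpha_0\cdots\alpha_p}$ into $(\Omega^p_A)_{(Z_{\alpha_0}\cdots Z_{\alpha_p})}$ compatibly with \eqref{algtrasfo}, or work throughout with the graded $A$-module attached to $\Omega^p$ as in \S\ref{algsection}; in either formulation the homogeneous-Koszul-cocycle picture of \S\ref{algsection} applies verbatim.
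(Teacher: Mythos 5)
Your proposal is correct and uses exactly the paper's ingredients — GAGA together with Harvey's isomorphism (Lemma \ref{HarvDolbisolemma}), Lemma \ref{allholoareharm} to get $d$-closedness, and the absorption of the correcting coboundary into an Atiyah cocycle of a smoothly trivial bundle via Theorem \ref{exactcoboundthm}, just as in the proof of Theorem \ref{steponethm} — the only difference being the order: the paper first produces an algebraic representative of the class via GAGA and then reruns the Theorem \ref{steponethm} argument on it, whereas you start from the Theorem \ref{steponethm} representative and correct it by an analytic coboundary $\delta\sigma$, which you then absorb. Since the crucial step (the coboundary literally equals an Atiyah $(p,0)$-cocycle, not merely up to cohomology) is the same in both arrangements, this is essentially the paper's proof.
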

\begin{proof}
Recall that the sheaf $\Omega^p_{\rm alg}$ of algebraic $p$-forms on $\M$ can be defined for all $\alpha_0,\dots,\alpha_p\in\{1,\dots,n\}$ by
$$
\Omega^p_{\rm alg}(U_{\alpha_0\cdots\alpha_p}):=(\Omega_A^p)_{(Z_{\alpha_0}\cdots Z_{\alpha_p})}.
$$
Since algebraic $p$-forms are in particular holomorphic, we have an inclusion \cite[\S11]{Serre1}
$$
\check{C}^p(\GU;\Omega^p_{\rm alg})\subset\check{C}^p(\GU;\Omega^p)
$$
which intertwines the \v{C}ech differentials. By GAGA \cite[\S12, Thm. 1]{Serre1},
$$
\check{H}^p(\GU;\Omega^p_{\rm an})\cong \check{H}^p(\GU;\Omega^p_{\rm alg}).
$$
Therefore, Lemma \ref{HarvDolbisolemma} gives that every class in $H^{p,p}(\M)\cap H^{2p}(\M;\Q(p))$ has a representative of the form $\Di^{p,p}(\eta)$ for some $\eta\in\check{Z}^p(\GU;\Omega^p_{\rm alg})$. We have $d\eta=0$ by Lemma \ref{allholoareharm}, so $\Di\Ri^{p,p}(\eta)$ is a closed $2p$-form on $\M$ representing a $\Q(p)$-valued DeRham class. Hence $\eta$ is \v{C}ech cohomologous to a rational linear combination of Atiyah $(p,0)$-cocycles (see the proof of Theorem \ref{steponethm}). 
\end{proof}
Every holomorphic vector bundle on a projective variety is isomorphic to a vector bundle with algebraic transition functions \cite[Prop. 18]{Serre2}. It follows that the Atiyah $(p,0)$-cocycle of a holomorphic vector bundles is cohomologous to an algebraic Atiyah $(p,0)$-cocycle. 
Theorem \ref{projsteponethm} says that a $\bar{\pd}$-closed Atiyah $(p,0)$-cocycle $\xi^{E,p,0}$ is always cohomologous to an algebraic one, even if the vector bundle is not holomorphic. Perhaps one could use this result to show that such a vector bundle $\Ei$ has holomorphic transition functions, up to smooth isomorphism, what would validate the Hodge conjecture.


\end{document}